\numberwithin{equation}{section}
\newtheorem{theorem}{Theorem}[section]
\newtheorem{definition}[theorem]{Definition}
\newtheorem{proposition}[theorem]{Proposition}
\newtheorem{corollary}[theorem]{Corollary}
\newtheorem{lemma}[theorem]{Lemma}
\newtheorem{remark}[theorem]{Remark}
\newcommand{\cali}[1]{\mathscr{#1}}
\newcommand{\Tan}{\mathop{\mathrm{Tan}}\nolimits}
\newcommand{\Cotan}{\mathop{\mathrm{Cotan}}\nolimits}
\newcommand{\Nor}{\mathop{\mathrm{Nor}}\nolimits}
\newcommand{\Leb}{{\rm Leb}}
\newcommand{\Vol}{{\rm Vol}}
\newcommand{\dist}{{\rm dist}}
\newcommand{\chac}{{\rm \bf 1}}
\newcommand{\loc}{{loc}}
\newcommand{\ddc}{{dd^c}}
\newcommand{\ddczw}{{dd^c_{z,w}}}
\newcommand{\ddcy}{{dd^c_{y}}}
\newcommand{\ddcz}{{dd^c_{z}}}
\newcommand{\dc}{{d^c}}
\newcommand{\dbar}{{\overline\partial}}
\newcommand{\ddbar}{{\partial\overline\partial}}
\newcommand{\ind}{{\bf 1}}
\newcommand{\id}{{\rm id}}
\newcommand{\hol}{{\rm hol}}
\newcommand{\lof}{\mathop{\mathrm{{log^\star}}}\nolimits}
\renewcommand{\Re}{{\rm Re}}
\renewcommand{\Im}{{\rm Im}}
\newcommand{\Ac}{\cali{A}}
\newcommand{\Cc}{\cali{C}}
\newcommand{\Dc}{\cali{D}}
\newcommand{\Fc}{\cali{F}}
\newcommand{\Lc}{\cali{L}}
\newcommand{\Oc}{\cali{O}}
\newcommand{\Qc}{\cali{Q}}
\newcommand{\Uc}{\cali{U}}
\newcommand{\Vc}{\cali{V}}
\newcommand{\Ic}{\cali{I}}
\newcommand{\A}{\mathbb{A}}
\newcommand{\B}{\mathbb{B}}
\newcommand{\C}{\mathbb{C}}
\newcommand{\D}{\mathbb{D}}
\newcommand{\E}{\mathbb{E}}
\renewcommand{\H}{\mathbb{H}}
\newcommand{\N}{\mathbb{N}}
\newcommand{\Z}{\mathbb{Z}}
\newcommand{\R}{\mathbb{R}}
\newcommand{\T}{\mathbb{T}}
\newcommand{\U}{\mathbb{U}}
\newcommand{\V}{\mathbb{V}}
\renewcommand{\S}{\mathbb{S}}
\renewcommand{\P}{\mathbb{P}}
\title[Negative Lyapunov exponent]{Singular holomorphic  foliations by curves  II: Negative Lyapunov exponent}
\author{ Vi{\^e}t-Anh Nguy{\^e}n}
\date{May 29, 2020}
\begin{document}

\maketitle

\begin{abstract}
Let $\Fc$ be  a  holomorphic  foliation by Riemann surfaces defined on a   compact complex projective surface $X$ satisfying the following  two conditions:

$\bullet$  the   singular points of $\Fc$ are all hyperbolic;

 $\bullet$ $\Fc$ is  Brody hyperbolic.
 
 Then we establish  cohomological formulas for the Lyapunov  exponent and  the Poincar\'e mass   of 
 an extremal positive  $\ddc$-closed  current tangent to $\Fc.$

 If, moreover, there is no   nonzero positive closed  current  tangent to $\Fc,$ 
then 
we  show that the   Lyapunov exponent $\chi(\Fc)$ of $\Fc,$  which is, by definition,  the   Lyapunov exponent
of the unique  normalized  positive  $\ddc$-closed  current tangent to $\Fc,$
 is a strictly negative   real number.
 
 As  an application, we compute the Lyapunov  exponent of a generic  foliation with a given degree 
 in $\P^2.$
  \end{abstract}

\bigskip

\bigskip

\noindent
{\bf Classification AMS 2010:} Primary: 37F75, 37A30;  Secondary:  57R30, 58J35, 58J65, 60J65.

\noindent
{\bf Keywords:} holomorphic  foliation,  hyperbolic singularity, Poincar{\'e} metric,  $\ddc$-closed  current,  holonomy cocycle, Lyapunov exponent.



 \section{Introduction} \label{intro}


Let $\Fc=(X,\Lc,E)$    be  
a   holomorphic foliation  by curves in a    compact K\"ahler  surface $X$  with      the set of  singularities $E.$ 
Recall that the foliation $\Fc$ is given by  an open covering $\{\U_j\}$ of $X$ and  holomorphic vector fields $v_j\in H^0(\U_j,\Tan(X))$ with isolated singularities (i.e. isolated zeroes) such that
\begin{equation}\label{e:Cotan}
v_j=g_{jk}v_k\qquad\text{on}\qquad \U_j\cap \U_k
\end{equation}
for some non-vanishing holomorphic functions $g_{jk}\in H^0(\U_j\cap \U_k, \Oc^*_X).$
Its leaves are locally integral curves of these vector fields.  The set of singularities $E$ of $\Fc$ is  precisely the union of the zero  sets of these local vector fields. The set $E$ is finite.
 We say that a
singular point $a\in E $ is  {\it linearizable}  if 
  there are  local holomorphic coordinates $(z,w)\in\C^2$ centered at  $a$  such that the
leaves of $\Fc$ are integral curves of a  vector field 
 $$Z(z,w) = z {\partial\over \partial z}
+ \lambda w{\partial\over \partial w}\qquad\text{with some complex number}\qquad \lambda\not =0.$$ 
The   analytic curves $\{  z=0\} $ and   $\{  w=0\}$ are  called  {\it separatrice}  at $a.$
If moreover, $\lambda\not\in\R,$ then  we say that $a$ is {\it hyperbolic}.
 
The  functions $g_{jk}$ form a multiplicative cocycle and hence give a  cohomology class in $H^1(X,\Oc^*_X),$ that is,  a holomorphic line  bundle on $X.$
It is  called the {\it cotangent bundle } of $\Fc,$ and is denoted by $\Cotan(\Fc).$ Its  dual $\Tan(\Fc),$ represented by the inverse  cocycle $\{g_{jk}^{-1}\}$, is called the {\it tangent  bundle } of $\Fc.$    

Recall also that
 a positive $\ddc$-closed current $T$ of bidegree $(1, 1)$ on $X$ is {\it directed by}  the foliation $\Fc$ (or equivalently, {\it tangent to} $\Fc$) 
if $T \wedge \Phi = 0$ for every local holomorphic $1$-form $\Phi$ defining $\Fc .$
The directed $\ddc$-closed  currents are  generalizations of the  {\it foliations cycles}
introduced by  Sullivan  \cite{Sullivan}.

Two fundamental concepts  associated to  $\Fc$ are its  holonomy cocycle  $\mathcal H$  and the convex cone of  {\it positive $\ddc$-closed   currents} directed by  $\Fc.$ 
 They are 
geometric objects.  An important characteristic  number  which relates these two concepts  is
the Lyapunov exponent $\chi(T)$ of an extremal positive $\ddc$-closed  current $T$ directed by $\Fc.$
In the  first article of this  series,  
we have  established  an effective   sufficient condition for the  existence  of the Lyapunov exponents of  Brody hyperbolic  foliations.
This class  of foliations  was first introduced in our joint-work with Dinh and Sibony   \cite{DinhNguyenSibony14b}.  
Now  we continue, in the  second  article of this  series,
the study  of the Lyapunov exponents.  More   specifically,  we investigate    the interplay between  the  dynamical and geometric  interpretations  of
 these characteristic numbers, and we determine whether these numbers are  positive/zero/negative. 
 The presence  of  singular points makes our analysis delicate.

Let $\Omega=\Omega(\Fc)$ be the sample-path space consisting of all continuous paths $\omega:\R\to X\setminus E$ with image fully contained in a single leaf. 
Let $g_P$ be the leafwise  Poincar\'e metric on $\Fc.$ Consider   the  corresponding  harmonic positive measure
 \begin{equation}\label{e:mu} \mu:= T\wedge  g_P\quad\text{ on}\quad X\setminus E,\quad\text{and}\quad \mu(E):=0.
 \end{equation}
 The {\it Poincar\'e mass} of $T$ is, by definition,  the mass $\|\mu\|:=\int_X d\mu$ of the measure $\mu.$  
When all points  $a\in E$ are linearizable,  by  \cite[Proposition 4.2]{DinhNguyenSibony12} $\|\mu\|$ is  finite.
For $x\in X\setminus E,$  the restriction of $g_P$   on the leaf $L_x$ passing through $x,$ generates the corresponding leafwise Brownian motion.
This Markov process defines a canonical probability measure, the Wiener measure $W_x$  on $\Omega$ which gives full mass to  the subspace $\Omega_x$
consisting of all paths $\omega\in\Omega$ with $\omega(0)=x$  (see Subsection \ref{SS:Wiener} below).  
Now  we  recall the following  existence theorem for the Lyapunov exponents.

\begin{theorem}\label{T:VA}  {\rm  (\cite[Theorem 1.1]{NguyenVietAnh18b}).}
Let $\Fc=(X,\Lc,E)$ be  a  holomorphic  foliation  by Riemann surfaces defined on a    Hermitian compact complex projective surface $X$ satisfying the following  two conditions:

$\bullet$  its singularities  $E$ are all hyperbolic;

 $\bullet$ $\Fc$ is  Brody hyperbolic.
 
 Let $T$ be  a positive $\ddc$-closed   current  directed by $\Fc$  which does not give mass to any  invariant  analytic curve.
Assume, in addition, that $T$ is an  extremal element in the convex cone of  all  positive $\ddc$-closed   currents  directed by $\Fc.$

   Then    \begin{enumerate}
   \item $T$ admits
the (unique) {\bf Lyapunov exponent} $\chi(T)$  given by the  formula
\begin{equation}\label{e:Lyapunov_exp}
\chi(T):= \int_X\big(\int_{\Omega} \log \|\mathcal{H}(\omega,1)\| dW_x(\omega)\big)d\mu(x).  
\end{equation}

\item
  For  $\mu$-almost  every $x\in X\setminus E,$  we have
 $$
 \lim\limits_{t\to \infty} {1\over  t} \log  \| \mathcal H(\omega,t) \|=\chi(T)  
 $$
  for    almost every path  $\omega\in\Omega$ with respect to $W_x.$
  \end{enumerate}
\end{theorem}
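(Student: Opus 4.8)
The plan is to realize $\chi(T)$ as a Birkhoff average of a cocycle over the leafwise Brownian motion. Two features make this tractable. First, $\Fc$ has complex dimension one, so its holonomy (normal) bundle is a line bundle and $\log\|\mathcal H(\cdot,t)\|$ is an \emph{additive} cocycle; Birkhoff's pointwise ergodic theorem then suffices where one would otherwise invoke the full Oseledets multiplicative ergodic theorem. Second, a positive $\ddc$-closed current directed by $\Fc$ is, via wedging with $g_P$, a harmonic measure in the sense of Garnett, i.e.\ a stationary measure for the leafwise heat diffusion.

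Concretely, I would first lift $\mu=T\wedge g_P$ to the path space by setting $\bar\mu:=\int_X W_x\, d\mu(x)$; this is a \emph{finite} positive measure on $\Omega$ because the hyperbolic singularities are linearizable and hence $\|\mu\|<\infty$ by \cite[Proposition 4.2]{DinhNguyenSibony12}. Harmonicity of $\mu$ makes $\bar\mu$ invariant under the time-shift $\Theta^t\colon\Omega\to\Omega$, $(\Theta^t\omega)(s):=\omega(s+t)$, and extremality of $T$ in the convex cone of positive $\ddc$-closed currents directed by $\Fc$ makes $\mu$, hence the system $(\Omega,\bar\mu,\Theta^1)$, ergodic. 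From the cocycle identity $\mathcal H(\omega,t+s)=\mathcal H(\Theta^t\omega,s)\,\mathcal H(\omega,t)$ and the fact that $\mathcal H$ takes scalar values, the function $\phi(\omega):=\log\|\mathcal H(\omega,1)\|$ satisfies $\log\|\mathcal H(\omega,n)\|=\sum_{k=0}^{n-1}\phi(\Theta^k\omega)$ for every integer $n\ge 1$.

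The heart of the argument is the integrability $\phi\in L^1(\bar\mu)$. Away from a fixed neighbourhood of $E$ this follows from compactness of $X$ and comparability of $g_P$ with the Hermitian metric there, so the real issue is to bound the expected logarithmic cost of the Brownian path's excursions into small bidiscs around each hyperbolic singular point $a$. For this I would pass to linearizing coordinates $(z,w)$ in which the leaves are orbits of $z\,\partial/\partial z+\lambda w\,\partial/\partial w$ with $\lambda\notin\R$, write the linear holonomy of a short leafwise path explicitly in these coordinates, and estimate its logarithm in terms of the leafwise length together with how often and how deeply the path visits the singularity. The required quantitative control — sharp behaviour of the leafwise Poincar\'e metric and of the heat kernel near $a$, and finiteness of the mean time and mean logarithmic cost of such excursions — is precisely the ``effective sufficient condition'' established in the first article of the series \cite{NguyenVietAnh18b}; the non-reality of $\lambda$ is used essentially here, and the hypothesis that $T$ charges no invariant analytic curve removes the exceptional leaves (separatrices and algebraic leaves) on which this control fails. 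I expect this singular estimate to be the main obstacle.

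Granting $\phi\in L^1(\bar\mu)$, Birkhoff's ergodic theorem applied to $\phi$ over $(\Omega,\bar\mu,\Theta^1)$ gives, for $\bar\mu$-almost every $\omega$, that $\frac1n\log\|\mathcal H(\omega,n)\|\to \int_\Omega \phi\, d\bar\mu$, a constant which we name $\chi(T)$. Passing from integer to real times amounts to controlling $\sup_{0\le s\le 1}\big|\log\|\mathcal H(\omega,n+s)\|-\log\|\mathcal H(\omega,n)\|\big|$ by an integrable function and applying Borel--Cantelli, using the same excursion estimates; this yields assertion (2) at the level of $\bar\mu$. Disintegrating $\bar\mu=\int_X W_x\,d\mu(x)$ rewrites $\int_\Omega\phi\,d\bar\mu$ as the right-hand side of \eqref{e:Lyapunov_exp}, which is assertion (1); the same disintegration applied to the $\bar\mu$-conull set on which (2) holds gives: for $\mu$-almost every $x$, the stated limit holds for $W_x$-almost every $\omega$. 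Uniqueness of $\chi(T)$ is automatic, since any almost-everywhere limit of $\frac1t\log\|\mathcal H(\omega,t)\|$ must agree $\bar\mu$-a.e.\ with the constant $\int_\Omega\phi\,d\bar\mu$.
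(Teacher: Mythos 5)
Your outline reproduces the architecture of the original argument for this theorem: wedge $T$ with $g_P$ to get a harmonic ($D_t$-invariant) measure, lift it to the path space, use extremality to get ergodicity of the shift, exploit the fact that the normal bundle is a line bundle so that $\log\|\mathcal H(\cdot,t)\|$ is an additive scalar cocycle, and conclude by Birkhoff plus a discretization/Borel--Cantelli step to pass from integer to real times. All of that is sound, and you also correctly observe that for the real-time statement one needs integrability of $\sup_{0\le s\le 1}\big|\log\|\mathcal H(\omega,s)\|\big|$ (not merely of $\log\|\mathcal H(\omega,1)\|$), which is indeed the form in which the integrability must be proved.

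The genuine gap is that the decisive analytic step is not carried out: the finiteness of $\int_X\E_x\big[\sup_{0\le t\le 1}|\log\|\mathcal H(\cdot,t)\||\big]\,d\mu(x)$, i.e.\ the control of the expected logarithmic holonomy cost of Brownian excursions into the linearized bidiscs around the hyperbolic singularities. You defer this to the ``effective sufficient condition'' of \cite{NguyenVietAnh18b}; but the statement you are asked to prove \emph{is} Theorem 1.1 of that paper, so the citation is circular and nothing in your proposal actually establishes the estimate. This is not a routine verification: it requires the precise behaviour of the leafwise Poincar\'e metric near a singular point ($\eta(x)\approx \dist(x,E)\lof\dist(x,E)$, as in Lemma \ref{L:Poincare}), quantitative bounds on how deep a leafwise geodesic or Brownian path can penetrate a singular flow box before a given hyperbolic time (the analogue of Lemma \ref{L:go_deeper}), heat-kernel/diffusion estimates to convert these into expectations, and the hypothesis that $T$ charges no invariant curve, used through the vanishing of local mass (Lelong-type) near the separatrices — none of which is supplied in your sketch. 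Everything else in your plan is correct in spirit and matches the known proof; without this integrability estimate, however, the formula \eqref{e:Lyapunov_exp} is not even known to be finite, so the proof as written is incomplete at its core.
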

 In fact, assertion (1) is a  consequence of  the so-called  {\it integrability of the  holonomy cocycle.}
Assertion (2) says that  
the  characteristic  number $\chi(T)$    measures  heuristically   the exponential  rate of  convergence  of leaves toward each other  along  leafwise Brownian trajectories
 (see Candel  \cite{Candel03},  Deroin \cite{Deroin05} for the nonsingular case).   
 Therefore, Theorem \ref{T:VA} gives a  dynamical  characterization of $\chi(T).$

 Now  we  discuss  the geometric  aspect of $\chi(T).$ Let $H^{1,1}(X)$ denote the Dolbeault cohomology
group of real smooth $(1,1)$-forms on $X.$ For  a real smooth closed $(1,1)$-form $\alpha$ on $X,$ let $\{\alpha\}$ be  its  class
in $H^{1,1}(X).$
The cup-product $\smile$ on $H^{1,1} (X) \times H^{1,1} (X)$ is defined by
$$
(\{\alpha\},\{\beta\}) \mapsto \{\alpha\} \smile \{\beta \} :=\int_X\alpha\wedge\beta,
$$
where $\alpha$ and $\beta$ are real smooth closed forms. The last integral depends only on
the classes of $\alpha$  and $\beta.$ The bilinear form $\smile$ is non-degenerate and induces
a canonical  isomorphism  between  $H^{1,1} (X)$ and its  dual  $H^{1,1} (X)^*$  (Poincar\'e duality). In
the definition of $\smile$ one can take $\beta$ smooth and $\alpha$ a current in the sense of
de Rham. So, $H^{1,1} (X)$ can be defined as the quotient of the space of real
closed $(1, 1)$-currents by the subspace of $d$-exact currents. Recall that an $(1, 1)$-current $\alpha$ is real  (resp. $\ddc$-closed)  if $\alpha = \bar \alpha$ (resp. $\ddc\alpha=0$).
Assume that $\alpha$ is a real
$\ddc$-closed $(1, 1)$-current. 
 Then 
by the $\ddc$-lemma, the integral $\int_X \alpha \wedge \beta$ is also independent of the
choice of $\beta$ smooth and closed in a fixed cohomology class. So, using the above ismomorphism,
one can associate to  such $\alpha$ a class $\{\alpha\}$ in $H^{1,1} (X).$
 For a complex  line  bundle $\E$ over $X,$ let  $c_1(\E)$ denote the cohomology Chern class of $\E.$
 This is an element  in $H^{1,1}(X).$

 
 Our first  main  result gives cohomological formulas for $\chi(T)$ and $\|\mu\|$ in terms of the geometric quantity $T$ and some   characteristic classes
 of $\Fc.$
\begin{theorem}\label{T:Main_1}  {\rm (Theorem A) }
Under the  assumption of Theorem  \ref{T:VA},
the following  identities hold
   \begin{subequations}\label{e:thm_A}
     \begin{alignat}{1}
\chi(T)&=  - c_1(\Nor(\Fc))\smile\{T\},\label{e:coho_exponent}\\ 
 \|\mu\|&= c_1(\Cotan(\Fc))\smile\{T\}\label{e:coho_mass}.
  \end{alignat}
\end{subequations}
Here  $\Nor(\Fc):=\Tan(X)/\Tan(\Fc)$ 
stands for the normal bundle 
of $\Fc,$  where $\Tan(X)$ (resp. $\Tan(\Fc)$ and $\Cotan(\Fc)$) is  as  usual the  tangent bundle of $X$ (resp. the tangent bundle  and the  cotangent bundle of $\Fc$).
\end{theorem}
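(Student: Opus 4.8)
My plan is to derive both identities from a single mechanism: a curvature computation on the line bundles $\Cotan(\Fc)$ and $\Nor(\Fc)$, turned into a statement about the cup product with $\{T\}$ by means of two inputs — the equation $\ddc T=0$, so that comparison potentials between Hermitian metrics integrate to zero against $T$, and the harmonicity of $\mu=T\wedge g_P$, so that it is stationary for the leafwise heat flow. The recurring difficulty will be controlling the relevant potentials near the hyperbolic singularities $E$; for this I intend to invoke the local models for the leafwise Poincar\'e metric and for the holonomy near a linearizable singularity from \cite{DinhNguyenSibony12,DinhNguyenSibony14b,NguyenVietAnh18b}, together with the hypothesis that $T$ gives no mass to an invariant analytic curve, which governs the behaviour of $T$ along the separatrices.

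\emph{Step 1: the Poincar\'e mass.} On $X\setminus E$ the leafwise Poincar\'e metric $g_P$ induces a (singular) Hermitian metric $h_P$ on $\Tan(\Fc)$, hence on $\Cotan(\Fc)$; since $g_P$ has constant Gaussian curvature $-1$, a direct computation gives $c_1(\Cotan(\Fc),h_P)=g_P$ as leafwise $(1,1)$-forms, after fixing the normalization of $g_P$ as in \cite{DinhNguyenSibony12}. Choosing in addition a smooth metric $h_0$ on $\Cotan(\Fc)$, one has $c_1(\Cotan(\Fc),h_0)-g_P=\ddc\rho$ on $X\setminus E$ for a potential $\rho$. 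Because $T$ is directed by $\Fc$, only leafwise $(1,1)$-parts pair with $T$, so
\[
c_1(\Cotan(\Fc))\smile\{T\}=\int_X c_1(\Cotan(\Fc),h_0)\wedge T=\|\mu\|+\int_X\ddc\rho\wedge T .
\]
It then suffices to prove $\int_X\ddc\rho\wedge T=0$, and here lies the one genuinely delicate point: $\rho$ is unbounded near $E$ (and near the separatrices approaching $E$). I would handle this with a cut-off $\chi_\varepsilon$ vanishing near $E$, using that $\int\rho\,\ddc T=0$ away from $E$ and that the boundary terms tend to $0$ because, by the local models, $\rho$ grows only polylogarithmically while $\mass(T\wedge g_P)$ is finite near $E$ and $T$ charges no separatrix. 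This yields \eqref{e:coho_mass}.

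\emph{Step 2: the Lyapunov exponent, reduction to a drift.} Set $a(\omega,t):=\log\|\mathcal H(\omega,t)\|$; the multiplicative cocycle identity for $\mathcal H$ makes $a$ an additive functional of the leafwise Brownian motion, which is integrable by the integrability of the holonomy cocycle from \cite{NguyenVietAnh18b}. With $F(t,x):=\int_\Omega a(\omega,t)\,dW_x(\omega)$, the Markov property together with the cocycle identity gives $F(t+s,x)=F(t,x)+(P_tF(s,\cdot))(x)$, where $P_t$ is the leafwise heat semigroup; differentiating at $s=0$ gives $\partial_tF=P_tb$, $F(0,\cdot)=0$, hence $F(t,x)=\int_0^t(P_\tau b)(x)\,d\tau$ with $b(x):=\partial_s|_{s=0}F(s,x)$. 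Taking $t=1$ in \eqref{e:Lyapunov_exp} and using the $P_\tau$-stationarity of $\mu$,
\[
\chi(T)=\int_X F(1,x)\,d\mu(x)=\int_0^1\Big(\int_X (P_\tau b)\,d\mu\Big)d\tau=\int_X b\,d\mu .
\]

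\emph{Step 3: identification of the drift.} Finally, the cocycle $\mathcal H$ is the parallel transport along the leaves of the Bott partial connection on $\Nor(\Fc)$. Hence in a flow box $\U_j$, with a holomorphic frame $e_j$ of $\Nor(\Fc)$ adapted to $\Fc$ and $\phi_j:=\log\|e_j\|_h$ for a fixed smooth metric $h$ on $\Nor(\Fc)$, one has $a(\omega,t)=\phi_j(\omega(t))-\phi_j(\omega(0))$ while $\omega|_{[0,t]}\subset\U_j$; Dynkin's formula then shows that $b$ is, on $\U_j\setminus E$, a fixed positive multiple of $\Delta_P\phi_j$, which is independent of $j$ since $\phi_j-\phi_k$ is leafwise harmonic (the transition functions of $\Nor(\Fc)$ being holomorphic and non-vanishing), so $b$ is a well-defined function on $X\setminus E$. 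Since $\ddc\phi_j=-c_1(\Nor(\Fc),h)$ and $\ddc u|_{\mathrm{leaf}}$ is a fixed multiple of $(\Delta_P u)\,g_P$ on each leaf, after matching the universal constants to the normalizations of \cite{NguyenVietAnh18b,DinhNguyenSibony12} this becomes the identity of leafwise $(1,1)$-forms $b\,g_P=-c_1(\Nor(\Fc),h)|_{\mathrm{leaf}}$ on $X\setminus E$. Since $d\mu=T\wedge g_P$, $T$ is directed, and $E$ is $T$-null, we conclude
\[
\chi(T)=\int_X b\,d\mu=\int_X T\wedge(b\,g_P)=-\int_X c_1(\Nor(\Fc),h)\wedge T=-\,c_1(\Nor(\Fc))\smile\{T\},
\]
which is \eqref{e:coho_exponent}. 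I expect the technical heart of the argument to be Step 1's integration by parts at $E$ (and the verification, underlying Step 2, that the heat-semigroup manipulations survive the singularities), where the explicit local structure of the Poincar\'e metric and of the holonomy at a hyperbolic singularity is indispensable.
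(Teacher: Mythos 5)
Your plan reproduces the paper's skeleton: Step~2 is, in substance, Proposition~\ref{P:Lyapunov_vs_kappa} (the drift of the additive functional via the heat semigroup), Step~3 is Corollary~\ref{C:Lyapunov_vs_kappa} (the drift equals the curvature density of $\Nor(\Fc)$), and Step~1 parallels Section~\ref{S:Mass}. But there is a genuine gap in Step~3. The identity $a(\omega,t)=\phi_j(\omega(t))-\phi_j(\omega(0))$ with $\phi_j=\log\|e_j\|_h$ is correct only for the \emph{singular} transversal metric $h=g_X^\perp$ induced by $g_X$ (cf.\ \eqref{e:hol_cocycle_dist_bis}), since $\mathcal H$ is measured with $g_X$ on transversals, not with an arbitrary smooth $h$. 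Consequently what your Steps~2--3 actually yield is $\chi(T)=\int_X\kappa\,d\mu$ with $\kappa\,g_P=-c_1(\Nor(\Fc),g_X^\perp)|_{\mathrm{leaf}}$; but the final equality $\int_X c_1(\Nor(\Fc),g_X^\perp)\wedge T= c_1(\Nor(\Fc))\smile\{T\}$ is not free, because the local weight of $g_X^\perp$ blows up like $\log\dist(\cdot,E)$ near $E$ (Lemma~\ref{L:trans_metric_near_sing}(3)), so the $\ddc$-lemma pairing with $\{T\}$ does not apply directly. That passage to cohomology is exactly where the paper works hardest: it interpolates $g_X^\perp$ by smooth metrics $g_\epsilon$, bounds $\kappa_\epsilon$ uniformly (Lemma~\ref{L:laplacian_hol_bis}) and applies dominated convergence, or alternatively uses a Poincar\'e-adapted cut-off together with the vanishing of Lelong numbers of $T$ at $E$. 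Relatedly, your chain $\chi(T)=\int_X F_1\,d\mu=\int_0^1\int_X(P_\tau b)\,d\mu\,d\tau=\int_X b\,d\mu$ requires $b\in L^1(\mu)$, which does \emph{not} follow from the cocycle integrability (that gives $F_1\in L^1(\mu)$, not $\kappa\in L^1(\mu)$); the paper extracts $\kappa\in L^1(\mu)$ from the sign constraint $\kappa\le c_0$ on $X\setminus E$ --- a consequence of the explicit second-order expansion in Lemma~\ref{L:laplacian_hol}(1), which gives $\Delta_P\kappa_x(0)\le 0$ near $E$ in the Euclidean local model --- combined with $D_s$-invariance of $\mu$; without it, the Fubini step is not justified.

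For Step~1 the strategy is the right one but materially understates the work. The growth of $\rho$ near $E$ is double-logarithmic, $\lof\lof\dist(\cdot,E)$ (Lemma~\ref{L:Cotan}(2)), not merely ``polylogarithmic'', and the precise order matters; a na\"ive Euclidean cut-off at scale $\epsilon$ has $|\ddc\chi_\epsilon|\sim\epsilon^{-2}$, which is not summable against $T\wedge g_X$, so a cut-off adapted to the singular Poincar\'e geometry (with $|\ddc\theta_\epsilon|\lesssim|\log\epsilon|^{-2}\dist^{-2}$) is needed; since $T$ is only $\ddc$-closed, the integration by parts produces terms against $\partial T$ and $\dbar T$, which the paper controls by Cauchy--Schwarz and the Forn\ae ss--Sibony inequality $i\tau\wedge\bar\tau\wedge T\le T\wedge g_P$; and Brunella's positivity of $c_1(\Cotan(\Fc),h^*)$ plus a B{\l}ocki--Ko{\l}odziej regularization are invoked to keep the approximating measures positive. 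None of this is dispatched by ``the boundary terms tend to $0$''.
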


On the  other hand, in collaboration with   Dinh and  Sibony, we have recently proved that in  most interesting cases,
the directed positive $\ddc$-closed   current exists uniquely   (up to  a  multiplicative constant).

\begin{theorem}
 \label{T:DNS} {\rm (\cite[Theorem 1.1]{DinhNguyenSibony18}) } 
 Let $\Fc$    be  
a    holomorphic foliation by   Riemann surfaces  with   only hyperbolic singularities  in a    compact K\"ahler surface $X.$ 
Assume that $\Fc$ admits no directed positive closed current.
Then there exists a  unique  positive  $\ddc$-closed   current $T$ of Poincar\'e mass  $1$  directed  by $\Fc.$  
\end{theorem}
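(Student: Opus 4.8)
\smallskip\noindent
I would deduce the theorem from the ergodic theory of the leafwise heat diffusion. Equip the hyperbolic part of $\Fc$ with the leafwise Poincar\'e metric $g_P$, let $\Delta$ be the associated leafwise Laplacian on $M:=X\setminus E$, and let $(D_t)_{t\ge0}$ be the leafwise heat semigroup, with adjoint $(D_t^*)$ acting on finite positive measures. A positive $\ddc$-closed current $T$ directed by $\Fc$ with $T(E)=0$ corresponds, via $\mu:=T\wedge g_P$ as in \eqref{e:mu}, to a probability measure on $M$ (once $\|\mu\|=1$) which is \emph{harmonic}, i.e. $D_t^*\mu=\mu$ for all $t$; and conversely every such harmonic probability measure comes from a unique directed positive $\ddc$-closed current of Poincar\'e mass $1$ (in the spirit of Garnett's correspondence, adapted to the singular situation). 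Thus the statement is equivalent to: \emph{the leafwise heat semigroup on $M$ has exactly one invariant probability measure, and it gives no mass to $E$.} The plan is to prove the sharper assertion that $D_t^*m_0$ converges weakly, as $t\to\infty$, to one and the same probability measure $m_\infty$ for \emph{every} starting probability measure $m_0$ on $M$ (unique ergodicity of the foliated heat flow); any invariant $m$ then satisfies $m=D_t^*m\to m_\infty$, so $m=m_\infty$, while $m_\infty(E)=0$ will be built into the construction.

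\smallskip\noindent
\emph{Existence.} I would first produce \emph{some} invariant probability measure by taking a smooth probability measure $m_0$ on $M$ and forming the Ces\`aro averages $m_R:=\tfrac1R\int_0^R D_t^*m_0\,dt$. By compactness of $X$ a subsequence converges weakly to a probability measure $m_\infty$ on $X$ annihilated by $\Delta$ on $M$. The delicate point is to rule out escape of mass to $E$, i.e. to show $m_\infty(E)=0$. This is local at each hyperbolic singularity $a$: in linearizing coordinates, where $\Fc$ is generated by $z\,\partial_z+\lambda w\,\partial_w$ with $\lambda\notin\R$, one estimates $g_P$ and the leafwise heat kernel near $a$ and shows the diffusion cannot concentrate there; the finiteness and local control of the Poincar\'e mass near a linearizable singularity, \cite[Proposition 4.2]{DinhNguyenSibony12}, is the key input, and the non-resonance $\lambda\notin\R$ is what forces the leaves to approach $a$ only along the two separatrices in a controlled, thin manner. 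Granting $m_\infty(E)=0$, the measure $m_\infty$ on $M$ is the Poincar\'e-mass measure of a directed positive $\ddc$-closed current $T$; moreover $\Fc$ has no invariant compact curve (such a curve would carry a directed positive \emph{closed} current, which is excluded by hypothesis), so $T$ charges no analytic set and $\{T\}\ne0$. Rescaling yields a directed positive $\ddc$-closed current of Poincar\'e mass $1$.

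\smallskip\noindent
\emph{Uniqueness.} The core is the convergence $D_t^*m_0\to m_\infty$, with no dependence on $m_0$. I would establish it through a contraction estimate of the form $\|D_t^*m-D_t^*m'\|\to0$ for probability measures $m,m'$ on $M$, in a suitable transverse (Wasserstein) sense. Two geometric mechanisms drive it. First, the leafwise metric has curvature $-1$, so leafwise Brownian paths run off to the boundary at infinity of the universal covers of leaves; coupling two Brownian motions issued from nearby plaques then contracts the transverse separation each time the underlying leaf is revisited. Second, the hypothesis that $\Fc$ carries no directed positive closed current is used precisely to guarantee enough recurrence: it prevents any nontrivial invariant closed subset from supporting a closed current, so harmonic measures are ``irreducible'' and leaves equidistribute, and consequently the transverse holonomy cocycle $\Hc$ accumulated along Brownian paths admits no invariant probability structure other than the trivial one. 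Combining the coupling with a tightness statement at $E$ (again via the local hyperbolic-singularity analysis, bounding the time Brownian paths spend near $E$) yields the convergence, hence the uniqueness of the invariant measure, hence of the directed positive $\ddc$-closed current.

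\smallskip\noindent
\emph{Main obstacle.} The real difficulty lies near the hyperbolic singular points: there $g_P$ degenerates, a leaf near $a$ winds around infinitely often, and one must control the holonomy cocycle $\Hc$ together with the entrance and exit times of leafwise Brownian motion on shrinking neighborhoods of $a$ precisely enough to feed both the no-escape-of-mass step and the contraction step. This is exactly where ``hyperbolic'' (rather than merely ``linearizable'') singularities are indispensable, and getting these estimates to hold along \emph{all} Brownian trajectories uniformly — not just $\mu$-almost everywhere, which already underlies Theorem \ref{T:VA} — is what makes the theorem hard.
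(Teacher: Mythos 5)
There is a genuine gap, and it sits exactly where the theorem is hard: your uniqueness step is not a proof. First, note that the present paper does not prove this statement at all; it quotes it from \cite{DinhNguyenSibony18}, and the proof there is not a dynamical heat-flow argument but a pluripotential/cohomological one: it rests on the Dinh--Sibony theory of densities of positive $\ddc$-closed currents, delicate mass estimates near the hyperbolic singularities, and Hodge theory on the compact K\"ahler surface, showing that two directed positive $\ddc$-closed currents have vanishing mutual ``intersection'' and hence proportional classes and then proportional currents (the $\P^2$ case was done earlier by an energy/intersection argument in \cite{FornaessSibony10}). Your reduction of uniqueness to ``unique ergodicity of the foliated heat flow'' via a contraction estimate $\|D_t^*m-D_t^*m'\|\to 0$ in a transverse Wasserstein distance is precisely the step you do not establish: the assertion that the absence of a directed positive closed current supplies ``enough recurrence'' so that a coupling of leafwise Brownian motions contracts transverse separation is a heuristic, not an argument, and no mechanism is given for why holonomy along Brownian paths should contract rather than expand. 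In the nonsingular transversally conformal setting this contraction is the content of Deroin--Kleptsyn \cite{DeroinKleptsyn}, and in the present singular setting the analogous contraction (negativity of the Lyapunov exponent) is exactly Theorem B of this paper, whose proof \emph{uses} the unique ergodicity theorem you are trying to prove; so the only available route to your key estimate is circular.

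A secondary, smaller issue: in the existence step the non-escape of mass to $E$ for the Ces\`aro limits, and the fact that the limit measure is harmonic and corresponds via Garnett-type theory to a directed positive $\ddc$-closed current, do require the local analysis near hyperbolic singularities that you only gesture at (the relevant inputs are \cite[Proposition 4.2]{DinhNguyenSibony12} and the local model estimates of Lemma \ref{L:Poincare}); this part is repairable along known lines. But as it stands the core of the theorem --- uniqueness --- is not proved by your proposal, and the known proof proceeds by an entirely different, non-probabilistic route.
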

Theorem \ref{T:DNS} implies strong ergodic properties  for  the  foliation  $\Fc.$ It is  worthy noting that
when $X=\P^2$  the theorem was obtained by  Forn{\ae}ss--Sibony  \cite{FornaessSibony10}, see also  Dinh--Sibony \cite{DinhSibony18a} and P\'erez-Garrand\'es \cite{Perez-Garrandes}.


 Suppose now  that $\Fc$    is  
a    holomorphic foliation by   Riemann surfaces  with   only hyperbolic singularities  in a    compact projective surface $X$ 
such  that $\Fc$ admits no directed positive closed current.
So the assumptions of both Theorem  \ref{T:VA} and \ref{T:DNS}  are fulfilled.
 By  Theorem \ref{T:DNS}, let $T$ be  the unique  directed positive  $\ddc$-closed   current $T$ whose the Poincar\'e mass  is  equal to $1.$ 
 \begin{definition}\label{D:Lyapunov}  \rm The {\it Lyapunov  exponent}  of  the foliation $\Fc,$  denoted by $\chi(\Fc),$  is by  definition, the  real number  $\chi(T)$ given by  Theorem \ref{T:VA}.
 \end{definition}
 
 When  we  explore the dynamical system  associated to a foliation $\Fc,$  the sign of its Lyapunov   exponent is a crucial   information.
 Indeed, the positivity/negativity of $\chi(\Fc)$ corresponds to the   repelling/attracting   character of a typical leaf  along a typical Brownian trajectory.    
 Here is  our   second main result. 

\begin{theorem}\label{T:Main_2}  {\rm (Theorem B) }
 Under the  assumption of Theorem  \ref{T:DNS}, assume in addition that $X$ is projective.
Then  $\chi(\Fc)$ is a negative  nonzero real number.
\end{theorem}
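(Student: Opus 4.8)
\medskip
\noindent\textbf{Proof proposal.}
The plan is to derive Theorem~\ref{T:Main_2} from the cohomological formula \eqref{e:coho_exponent} of Theorem~\ref{T:Main_1} together with a \emph{strict} positivity $c_1(\Nor(\Fc))\smile\{T\}>0$, the strictness being forced by the absence of a directed positive closed current. First I would fix, by Theorem~\ref{T:DNS}, the unique directed positive $\ddc$-closed current $T$ with $\|\mu\|=1$, so that $\chi(\Fc)=\chi(T)$ by Definition~\ref{D:Lyapunov}; as already observed, all hypotheses of Theorems~\ref{T:VA} and~\ref{T:DNS} are then in force, in particular $\Fc$ is Brody hyperbolic. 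One still has to check that this $T$ meets the two extra requirements in Theorems~\ref{T:VA} and~\ref{T:Main_1}: it is extremal, since by the uniqueness in Theorem~\ref{T:DNS} the cone of directed positive $\ddc$-closed currents is a half-line; and it charges no invariant analytic curve, since otherwise $T=T'+\sum_i\lambda_i[C_i]$ with the $C_i$ invariant and some $\lambda_i>0$, where $T'$ is a directed positive $\ddc$-closed current of Poincar\'e mass $<1$, hence equal to $cT$ with $c<1$; but then $\sum_i\lambda_i[C_i]=(1-c)T$ is closed with $1-c>0$, so $T$ would be closed, contrary to hypothesis. By \eqref{e:coho_exponent} it therefore suffices to prove $c_1(\Nor(\Fc))\smile\{T\}>0$, equivalently $\chi(T)<0$.

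\smallskip
The sign of $\chi(T)=\int_X\big(\int_{\Omega}\log\|\mathcal{H}(\omega,1)\|\,dW_x(\omega)\big)d\mu(x)$ I would analyse dynamically. In codimension one the holonomy cocycle $\mathcal{H}(\omega,t)$ is the derivative cocycle of a composition of holomorphic germs of the transverse coordinate along the leafwise Brownian path $\omega$. Writing the It\^o / heat-kernel decomposition of $\log\|\mathcal{H}(\omega,t)\|$ — valid on all of $[0,\infty)$ thanks to the integrability of the holonomy cocycle established in the first paper of this series — one expresses $\chi(T)$ as the negative of a transverse ``energy'', a relative-entropy-type functional of the transverse conditional measures of $\mu$ under the holonomy pseudogroup. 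This functional is non-negative, and it vanishes precisely when those conditional measures are holonomy-invariant, i.e. precisely when $T$ is in fact a directed positive \emph{closed} current. Away from $E$ this is the negativity theorem of Deroin for the foliated Brownian motion, and more generally the Deroin--Kleptsyn contraction principle for non-elementary conformal random dynamics.

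\smallskip
Since by hypothesis $\Fc$ carries no directed positive closed current, the energy is strictly positive, hence $\chi(T)<0$; with $\chi(\Fc)=\chi(T)$ this is exactly the assertion, and in particular $\chi(\Fc)\neq 0$. In the special case in which $\Nor(\Fc)$ is a positive line bundle — for instance $X=\P^2$, where $\Nor(\Fc)$ has positive degree — the inequality $c_1(\Nor(\Fc))\smile\{T\}>0$ is immediate from positivity of $T$ alone, and this is what yields the value of $\chi(\Fc)$ for a generic foliation in $\P^2$.

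\smallskip
I expect the main obstacle to be carrying the energy estimate of the second step out \emph{globally} on $X$ despite the singular set $E$. Near each $a\in E$ the leafwise Poincar\'e metric, the holonomy cocycle and the potentials entering the It\^o decomposition all degenerate, and $T$ is only $\ddc$-closed rather than closed; one has to localize at $a$, use the linearizing coordinates in which $\Fc$ is given by $z\,\partial/\partial z+\lambda\,w\,\partial/\partial w$ with $\lambda\notin\R$ together with the sharp asymptotics there, and combine this with the integrability statement of the first paper and the cut-off and approximation estimates underlying the proof of Theorem~\ref{T:Main_1}, so as to show that the contributions localized near $E$ are finite and affect neither the non-negativity of the energy nor the characterization of its vanishing. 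This is the technical heart of the argument.
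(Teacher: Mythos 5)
Your opening reductions are in line with the paper: you correctly note that, by Theorem~\ref{T:DNS}, the normalized $T$ is extremal (the cone of directed positive $\ddc$-closed currents being a half-line) and gives no mass to invariant curves (any invariant curve would itself be a directed positive closed current), so Theorems~\ref{T:VA} and~\ref{T:Main_1} apply, and by~\eqref{e:coho_exponent} the problem reduces to showing $c_1(\Nor(\Fc))\smile\{T\}>0$, i.e.\ $\chi(T)<0$. This matches the paper's starting point, where via Proposition~\ref{P:Lyapunov_vs_kappa} one has $\chi(T)=\int_X\kappa\,d\mu$.

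However, there is a genuine gap in the central step. You invoke an unspecified ``transverse energy / relative-entropy functional'' together with a claim that it is non-negative and vanishes exactly when $T$ is closed, citing Deroin and Deroin--Kleptsyn. That characterization is essentially the \emph{conclusion} one must prove, not a black box one can cite: the Deroin--Kleptsyn result is stated for compact foliations \emph{without} singularities, and the whole difficulty here is precisely the singular locus $E$. The paper does \emph{not} express $\chi(T)$ directly as the negative of an energy. Instead it argues by contradiction using a Hahn--Banach separation à la Sullivan and Ghys, which you do not mention, inside a carefully adapted weighted Banach space $\Cc^\star(X)$ whose norm uses the weight $W_\star=\lof\dist(\cdot,E)\cdot W$. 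Assuming $\int_X\kappa\,d\mu\ge 0$, Lemma~\ref{L:ddc_approx} produces smooth compactly supported $\psi_n$ with $(\kappa-\Delta_P\psi_n)/W_\star\ge-\epsilon_n$; from these one builds measures $\mu_n=e^{-2\psi_n}\Vol$ normalized by $\langle W_\star,\mu_n\rangle=1$, proves a Stokes identity $\int_X(\Delta_P\varphi_n-|d\varphi_n|^2_P)\,d\mu_n=0$ (Lemma~\ref{L:Stokes}) with delicate cut-offs near $E$, deduces $\int_X|d\varphi_n|^2_P\,d\mu_n\to 0$, and then, via Cauchy--Schwarz and a Skoda extension, produces a nonzero directed positive \emph{closed} current on $X$, contradicting the hypothesis. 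None of these steps appears in your proposal, and they cannot be outsourced to the nonsingular literature because the admissibility of the weighted norm hinges on the sharp estimates near $E$, in particular the $\mu$-integrability of $W$ from Proposition~\ref{P:Lyapunov_vs_kappa}. So while your high-level intuition (``$\chi(T)=0$ would force a closed current'') is correct and is what the paper ultimately establishes, the mechanism by which the paper gets there is the Hahn--Banach/weighted-energy machinery, and this is exactly what your sketch omits.

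One more small inaccuracy: for general projective $X$, $\Nor(\Fc)$ need not be a positive line bundle, so $c_1(\Nor(\Fc))\smile\{T\}>0$ is \emph{not} ``immediate from positivity of $T$''; it genuinely requires the dynamical argument. Your remark about $\P^2$ is fine as a special case, but the point of Theorem~B is precisely the general projective surface.
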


Roughly speaking,  Theorem B says that in the sense of ergodic theory, generic leaves have  the  tendancy to wrap together towards the support of the unique normalized 
directed positive $\ddc$-closed current.

 Now  we apply   the above results to  the family of  singular holomorphic  foliations  on $\P^2$ with  a given degree $d>1.$ 
 Recall that the degree is the number of tangencies of the foliation with a generic line.
 This  family can be identified  with  a Zariski dense open set $\Uc_d$ of some projective space. 
By Brunella  \cite{Brunella06}, 
if $\Fc\in\Uc_d$ with the  properties that all the singularities of $\Fc$
 are hyperbolic and that $\Fc$ does not possess any invariant algebraic curve, then $\Fc$ admits no nontrivial
directed positive closed current, and hence $\Fc$  satisfies
the assumptions of both  Theorems A and B.
By  Jouanolou \cite{Jouanolou} and  Lins Neto-Soares \cite{NetoSoares}, these properties
   are  satisfied
for  $\Fc$ in  a  set of full Lebesgue measure
 of $\Uc_d.$
Consequently, Theorems A and B apply and   give us the following result.
It can be   applied  to  every generic  foliation in $\P^2$ with a given degree $d>1.$
  
\begin{corollary}\label{C:Thm_A}
 Let $\Fc=(\P^2,\Lc,E)$    be  
a   singular  foliation by curves on the complex projective plane $\P^2.$ Assume that
all the singularities  are hyperbolic and  that  $\Fc$ has no invariant algebraic  curve. 
 Then  
 \begin{equation}\label{e:univ_exponent}\chi(\Fc)=-{d+2\over d-1},
 \end{equation}
 where $d$ is the degree of $\Fc.$
\end{corollary}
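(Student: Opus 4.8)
The plan is to deduce the corollary directly from Theorems A and B, reducing everything to a computation of Chern classes in $H^{1,1}(\P^2)$; the only genuine work is to check that the standing hypotheses hold. Since all singularities of $\Fc$ are hyperbolic and $\Fc$ possesses no invariant algebraic curve, Brunella's theorem \cite{Brunella06} shows that $\Fc$ carries no nonzero directed positive \emph{closed} current. By Chow's theorem every closed analytic curve in $\P^2$ is algebraic, so in fact $\Fc$ has no invariant analytic curve at all; in particular no directed positive $\ddc$-closed current gives mass to an invariant analytic curve. Together with the Brody hyperbolicity of $\Fc$ (known for foliations on $\P^2$ with only hyperbolic singularities and no invariant algebraic curve; see the discussion in the introduction), this places us simultaneously under the hypotheses of Theorems \ref{T:VA}, \ref{T:Main_1} and \ref{T:DNS}. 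Let $T$ be the unique directed positive $\ddc$-closed current with Poincar\'e mass $\|\mu\|=1$ given by Theorem \ref{T:DNS}; by Definition \ref{D:Lyapunov} we then have $\chi(\Fc)=\chi(T)$.

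Next comes the cohomological bookkeeping. The space $H^{1,1}(\P^2)$ is one-dimensional, generated by the class $h$ of a projective line, and the intersection form satisfies $h\smile h=1$. Since $T$ is a nonzero positive current, $\{T\}=m\,h$ with $m>0$. For a foliation $\Fc$ of degree $d$ on $\P^2$, the normalization attached to the definition of degree gives $c_1(\Tan(\Fc))=(1-d)h$, hence $c_1(\Cotan(\Fc))=(d-1)h$; and from $c_1(\Tan(\P^2))=3h$ together with $\Nor(\Fc)=\Tan(\P^2)/\Tan(\Fc)$ one obtains $c_1(\Nor(\Fc))=\bigl(3-(1-d)\bigr)h=(d+2)h$. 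Substituting into \eqref{e:coho_mass} and \eqref{e:coho_exponent} yields
\[
1=\|\mu\|=c_1(\Cotan(\Fc))\smile\{T\}=(d-1)\,m,\qquad\text{so}\qquad m=\frac{1}{d-1},
\]
and therefore
\[
\chi(\Fc)=\chi(T)=-\,c_1(\Nor(\Fc))\smile\{T\}=-(d+2)\,m=-\frac{d+2}{d-1},
\]
which is \eqref{e:univ_exponent}. Note that the hypothesis of no invariant algebraic curve forces $d\ge 2$, so the right-hand side is negative, consistently with Theorem \ref{T:Main_2}.

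All of the analytic content is already packaged in Theorems A and B, so the argument itself is elementary. The two points demanding care — and the only places where anything could go wrong — are: (i) the verification of the running hypotheses, above all the Brody hyperbolicity and the non-existence of a directed positive closed current, which rest on Brunella's structure theory \cite{Brunella06} (and on \cite{Jouanolou} and \cite{NetoSoares} if one additionally wants the conclusion for a set of full Lebesgue measure in $\Uc_d$); and (ii) keeping the normalizations straight, namely $h\smile h=1$ on $\P^2$ and the identification $c_1(\Tan(\Fc))=(1-d)h$, which must be matched with the definition of ``degree'' as the number of tangencies of $\Fc$ with a generic line. Everything else is forced.
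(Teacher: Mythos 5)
Your proof is correct and follows essentially the same route as the paper: verify via Brunella \cite{Brunella06} that the standing hypotheses (no directed positive closed current, hence Brody hyperbolicity, hence Theorems A, B, and \ref{T:DNS} all apply), then plug $\Nor(\Fc)=\Oc(d+2)$, $\Cotan(\Fc)=\Oc(d-1)$ and the mass normalization $c_1(\Cotan(\Fc))\smile\{T\}=1$ into the two cohomological identities of Theorem A. Writing $\{T\}=m\,h$ and solving $m=1/(d-1)$ is merely a slightly more explicit bookkeeping of the same elimination the paper performs inline.
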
  
To the best of our knowledge, this is  the first  family  of singular holomorphic foliations for which one knows to compute the Lyapunov exponents.

\medskip

Now  we discuss  the relations between our results and   previous works.
Candel \cite{Candel93}  introduces  the
Euler class of a  directed positive harmonic current in the context of   compact  Riemann surface laminations  endowed with a conformal metric.
So  his notion generalizes the intersection of
the Chern class of a holomorphic line bundle with a  directed positive harmonic current
considered  in Theorem A,  but only  when the foliation is  nonsingular. Moreover, in the context of general compact laminations,  Candel  introduces in \cite{Candel03}
the notion of   one-dimensional cocycle. This  notion   has  the  advantage of not using  any structure of complex line bundles.
He obtains an integral formula for   the Lyapunov exponent of such a cocycle
 with respect to a  directed  positive harmonic current. 
This result has been generalized  for higher  dimensional cocycles in \cite{NguyenVietAnh17}.
On the other hand,
a version of Theorem A  for  exceptional minimal sets has been established by Deroin \cite[Appendix A]{Deroin05}. 
Concerning  Theorem B,   the negativity of  $\chi(\Fc)$ has been proved by  Deroin and Kleptsyn \cite[Theorem B]{DeroinKleptsyn}
in the context of transversally  conformal foliations (without singularities), see also Baxendale \cite{Baxendale} for a related result.
Formula \eqref{e:univ_exponent} has already been found out by  Deroin and Kleptsyn \cite[Proposition 3.12]{DeroinKleptsyn} but under  the strong hypothetical additional assumption  that  there is
an   exceptional minimal set.

\medskip

One of  the main ingredients in our proof of Theorem A is  some  precise and delicate  estimates on the variations of the holonomy  cocycle and  the clustering  mass of a 
directed positive $\ddc$-closed current near the singularities. In particular, Proposition \ref{P:Lyapunov_vs_kappa} below plays a decisive role in our approach, it gives a  stronger  
estimate than those obtained previously   in  \cite{DinhNguyenSibony18,FornaessSibony10,NguyenVietAnh18b} etc.
 The proof of  Proposition \ref{P:Lyapunov_vs_kappa} is  partly based on  our last  work \cite{NguyenVietAnh18b}. However, it also requires  new techniques which rely  on estimates on heat diffusions associated to the leafwise Poincar\'e metric $g_P.$

If  we move away all the dynamical aspects of Theorem A, then  its proof   boils down to some continuity  problems of the  wedge-product of a positive $\ddc$-closed $(1,1)$-current with
another (not necessarily positive)  $(1,1)$-current  whose  potential  is  unbounded.
This type of problems  is quite new     in the intersection theory  of currents since up to now only the case with  bounded potential  is  understood (see e.g. \cite{DinhSibony04}).
In this  vein, 
other ingredients in our proof of Theorem A are some tools  from   complex  geometry  such as  the  regularization of  curvature currents of singular Hermitian holomorphic line bundle 
(see \cite{BlockiKolodziej}), 
the positivity of the cotangent  bundle  with respect to the leafwise Poincar\'e metric (see \cite{Brunella03}).

To prove the  negativity of the Lyapunov exponent (i.e. Theorem B), we use  Hahn-Banach separation theorem  following an idea of Sullivan \cite{Sullivan}  and  Ghys \cite{Ghys88}. However, in order to 
carry out this plan  in the context of singular foliations,  we introduce
 an adapted normed space whose norm  is taken with respect to a natural weight function $W.$ This  function reflects  the  singularities of the considered foliation. We make  a full use of 
the key fact  obtained in  Proposition \ref{P:Lyapunov_vs_kappa}  that $W$ is $\mu$-integrable. A systematic  study   on the variations of the holonomy  cocycle near the singularities
of the foliation
is also needed.

Understanding the dynamics of singular holomorphic  foliations  is  a  challenging big  program.
We hope that some of the techniques developed  in this paper may be extended  to singular holomorphic  foliations  in  higher dimensions.
 The reader is  invited  to  consult  P\u{a}un-Sibony \cite{PaunSibony} for a
  fruitful  discussion on the link between value distribution theory and     positive   currents directed by singular holomorphic foliations.
  
\medskip

The  paper is  organized as follows. In  Section   \ref{S:background}, we   recall
some basic  elements of singular holomorphic foliations.
Section \ref{S:preliminaries} is  devoted to a geometric study of the transversal metric and  the holonomy variations near  singularities, which leads to
a natural  weight function $W.$
Based on   this  study  Section \ref{S:Lyapunov} develops a  dynamical study of  the holonomy variations as  well as some
estimates on the  clustering mass of a directed positive $\ddc$-closed  current near  singularities.
Two proofs of the first half of Theorem A (i.e.  identity \eqref{e:coho_exponent}) are given at the end of the  section.
The  last half of  Theorem  A  (i.e.  identity \eqref{e:coho_mass}) is proved in Section \ref{S:Mass}. 
   Section \ref{S:Negativity} is  devoted to the proof of Theorem  B. Finally, we conclude the article with some open questions and remarks.

 After  I had finished  the first version of this  article  in November 2018, Deroin informed me that independently with Kleptsyn, they had obtained a similar result
 but possibly under a stronger hypothesis on $\Fc.$  A  crucial ingredient of their  approach is  my  Theorem \ref{T:VA}. 
 
\smallskip

\smallskip

\noindent
{\bf Acknowledgments. }  
The paper was partially prepared 
during my visit  at the  Vietnam  Institute for Advanced Study in Mathematics (VIASM). 
  I would like to express my gratitude to this organization for hospitality and  for  financial support.

\noindent {\bf Notation.} 
Recall that   $d,$ $\dc$ denote the  real  differential operators on $X$  defined by
$d:=\partial+\overline\partial,$  $\dc:= {1\over 2\pi i}(\partial -\overline\partial) $ so that 
$\ddc={i\over \pi} \partial\overline\partial.$ Throughout the  article, we denote by $\D$   the unit disc in $\C.$
For $r>0$  we denote by $r\D$  the disc in $\C$ with center  $0$  and
with radius $r.$  
  The letters $c,$ $c',$ $c_0,$  $c_1,$ $c_2$ etc. denote  positive constants, not necessarily the same at each  occurrence.  The notation $\gtrsim$ and $\lesssim$ means inequalities  up to  a  multiplicative constant, whereas  we  write  $\approx$ when  both inequalities  are satisfied.
 Let $O$ and $o$ denote   the usual Landau asymptotic  notations. 
Let $\lof(\cdot):=1+|\log(\cdot)|$ be a log-type function. 

 \section{Background}\label{S:background}
 

Let $X$ be  a compact complex surface  endowed with a smooth Hermitian metric $g_X.$
Let $\Fc=(X,\Lc,E)$ be a  holomorphic foliation by Riemann surfaces, where the   set of  singularities $E$ is  finite.
For a recent account on singular holomorphic foliations, the reader is invited to
consult the survey articles  \cite{DinhSibony18b,FornaessSibony08, NguyenVietAnh18c, NguyenVietAnh19}.

\subsection{Poincar\'e metric and Brody hyperbolicity}

Let $g_P$ be  the Poincar\'e metric 
on the unit disc  $\D,$  defined  by
$$ g_P(\zeta):={2\over (1-|\zeta|^2)^2} i d\zeta\wedge d\overline\zeta,\qquad\zeta\in\D,\quad \text{where}\ i:=\sqrt{-1}.  $$
  
A leaf $L$  of the  foliation is  said to be  {\it hyperbolic} if
it  is a   hyperbolic  Riemann  surface, i.e., it is  uniformized   by 
$\D.$   For any point $x\in X\setminus E,$  let $L_x$  be the   leaf passing  through $x.$   The   foliation   is  said to be {\it hyperbolic} if  
   all its leaves   are   hyperbolic.
For a hyperbolic  leaf $L_x,$
  consider a universal covering map
\begin{equation}\label{e:covering_map}
\phi_x:\ \D\rightarrow L_x\qquad\text{such that}\  \phi_x(0)=x.
\end{equation}
 This map is
uniquely defined by $x$ up to a rotation on $\D$. 
Then, by pushing   forward  the Poincar\'e metric $g_P$
on $\D$  
  via $\phi_x,$ we obtain the  so-called {\it Poincar\'e metric} on $L_x$ which depends only on the leaf. 
  The latter metric is given by a positive $(1,1)$-form on $L_x$  that we also denote by $g_P$ for the sake of simplicity.   

 For simplicity  we still denote by   $g_X$  the  Hermitian  metric on leaves of the foliation $(X\setminus E,\Lc)$  induced by  the ambient Hermitian metric  $g_X.$
  Consider the  function $\eta:\ X\setminus E\to [0,\infty]$ defined by
  $$
  \eta(x):=\sup\left\lbrace \|d\phi(0)\| :\ \phi:\ \D\to L_x\ \text{holomorphic such that}\ \phi(0)=x  \right\rbrace.
  $$
  Here, for the  norm of the  differential $d\phi$ we use the Poincar\'e metric on $\D$ and the Hermitian metric
  $g_X$ on $L_x.$ Clearly, we have 
  \begin{equation}\label{e:extremum}
   \eta(x)=\|d\phi_x(0)\|\qquad\text{for}\qquad  x\in X\setminus E.
  \end{equation}
  We  recall the following relation  between  $g_X$   and  the Poincar\'e metric $g_P$ on leaves
\begin{equation}
\label{e:relation_Poincare_Hermitian_metrics}
g_X(y)|_{L_x}=\eta^2(y) g_P(y)\qquad\text{for}\qquad y\in L_x,\  x\in X\setminus E.
\end{equation}

In   \cite{DinhNguyenSibony14b} the following class of foliations is  introduced. 

\begin{definition}\label{D:Brody}\rm 
A   foliation $\Fc=(X,\Lc,E)$   is  said  to be {\it Brody hyperbolic}  if  there is  a constant  $c>0$  such that
$\eta(x) \leq  c$ 
for all $x\in X\setminus E.$ 
\end{definition}

 \begin{remark}\rm  \label{R:hyperbolicty}
Note that  if the foliation $\Fc$ is Brody hyperbolic then it is hyperbolic. But the converse statement does not hold in general.

The  Brody hyperbolicity is  equivalent to 
the non-existence  of  holomorphic non-constant maps
$\C\rightarrow X$  such that out of  $E$ the image of $\C$ is  locally contained in a leaf, 
see \cite[Theorem 15]{FornaessSibony08}.
If $\Fc$ admits no directed positive closed current, then it is  Brody hyperbolic. 
\end{remark}

As  a consequence of \eqref{e:relation_Poincare_Hermitian_metrics}, the {\it  Poincar\'e norm}  $|df|_P$ of the differential $df$  for
  a differentiable function $f:\ L_x\to\C$   is given by   
$$|df|_P(y):=  \eta(y) |df(y)|\qquad\text{for}\qquad y\in L_x,
$$
 where $|df(y)|$ denotes the Euclidean norm of $df(y).$
\subsection{Poincar\'e laplacian $\Delta_P$ and the heat diffusions, directed positive harmonic currents vs harmonic measures}
\label{SS:background}
 A {\it (directed) $p$-form} (resp. a {\it (directed) $(p,q)$-form}) on $\Fc$ can be seen on the flow box
$\U\simeq \B\times\T$ as
a $p$-form  (resp.  $(p,q)$-form) on $\B$ depending on the parameter $t\in \T$. 
For $0\leq p\leq 2$ (resp. for $0\leq p,q\leq 1$),  denote by $\Dc^p_l(\Fc)$  (resp. $\Dc^{p,q}_l(\Fc)$) the space of 
$p$-forms (resp.  $(p,q)$-form) $f$ with compact support in $X\setminus E$ satisfying the following property: 
$f$ restricted to each flow box $\U\simeq \B\times\T$ is a 
$p$-form (resp.  $(p,q)$-form) of class $\Cc^l$ on the plaques whose coefficients and
all their derivatives up to order $l$ depend continuously on the
plaque. The norm $\|\cdot\|_{\Cc^l}$ on
this space is defined as in the case of real manifold using a locally finite
atlas of $\Fc$. We also define $\Dc^p(\Fc)$ (resp.  $\Dc^{p,q}(\Fc)$)  as the intersection of
$\Dc^p_l(\Fc)$ (resp.  $\Dc^{p,q}_l(\Fc)$) for $l\geq 0$.
  In particular, a
sequence $f_j$ converges to $f$ in $\Dc^p(\Fc)$ (resp. in  $\Dc^{p,q}(\Fc)$) if
these forms are supported in a fixed compact set of $X\setminus E$  and if
$\|f_j-f\|_{\Cc^l}\rightarrow 0$ for every $l$.
A {\it (directed) current of degree $p$ ({\rm or equivalently,}  of dimension $2-p$)} 
on $\Fc$ is a continuous
linear form on the space $\Dc^{2-p}(\Fc)$ 
with values in $\C$. 
  We often write for short $\Dc(\Fc)$ instead of  $\Dc^0(\Fc).$
  A directed $(p+q)$-current is said to be  of
{\it bidegree $(p,q)$  ({\rm or equivalently,}  of bidimension $(1-p,1-q)$)} if it vanishes on forms of bidegree $(1-p',1-q')$ for $(p',q')\not=(p,q).$

   A form $f \in \Dc^{1,1}(\Fc)$ is  said to be {\it positive} if its restriction to every plaque
 is  a  positive measure  in the usual sense.
 \begin{definition}\label{D:harmonic_current}  {\rm (Garnett \cite{Garnett}, see also  Sullivan \cite{Sullivan}).}
\rm  Let $T$ be a directed current of bidegree $(1,1)$ on $\Fc.$ 
    
  $\bullet$  $T$ is  said to be  {\it positive} if  $T(f)\geq 0$ for all positive forms $f\in \Dc^{1,1}(\Fc).$
    
$\bullet$ $T$ is  said to be  {\it   harmonic}  if  $\ddc T=0$ in the  weak sense (namely,  $T(\ddc f)=0$ for all functions  $f\in  \Dc(\Fc)$).
\end{definition}

Let $\U$ be any
flow box of $\Fc$ outside the singularities and denote by $V_\alpha$ the plaques of $\Fc$ in
$\U$ parametrized by $\alpha$ in some transversal $\Sigma$ of $\U.$ 
On the flow box $\U,$ a   positive harmonic  current $T$ directed by $\Fc$ (or equivalently, tangent to $\Fc$)  has
the form
\begin{equation}\label{e:decomposition}
T|_\U=\int_{\alpha\in\Sigma} h_\alpha [V_\alpha] d\nu(\alpha),
\end{equation}
where $h_\alpha$ is  a positive harmonic function on  $V_\alpha,$ and $[V_\alpha]$ denotes the current of intergration on the plaque $V_\alpha,$ and $\nu$ is a Radon measure on $\Sigma$  (see e.g. \cite[Proposition 2.3]{DinhNguyenSibony12}).

Let  $\Fc=(X,\Lc,E)$ be  a hyperbolic  foliation.  
     The leafwise Poincar\'e metric
$g_P$  induces  the corresponding 
Laplacian $\Delta_P$  on leaves such that
\begin{equation}\label{e:Laplacian}
\ddc f|_{L_x}={1\over \pi}\Delta_P f \cdot g_P,\qquad \text{on $L_x$ for all}\ f\in \Dc(\Fc).
\end{equation}
A positive finite  Borel measure  $\mu$ on $X$  which does  not  give mass to $E$  is said  to be
{\it  harmonic}  if
$$
\int_X  \Delta_P f d\mu=0
$$ 
 for all  functions  $f\in \Dc(\Fc).$

 For  every point  $x\in X\setminus E,$
 consider  the   {\it heat  equation} on $L_x$
 $$
 {\partial p(x,y,t)\over \partial t}=\Delta_{P,y} p(x,y,t),\qquad  \lim_{t\to 0} p(x,y,t)=\delta_x(y),\qquad   y\in L_x,\ t\in \R_+.
 $$
Here   $\delta_x$  denotes  the  Dirac mass at $x,$ $\Delta_{P,y}$ denotes the  Laplacian  $\Delta_P$ with respect to the  variable $y,$
 and  the  limit  is  taken  in the  sense of distribution, that is,
$$
 \lim_{t\to 0+
}\int_{L_x} p(x,y,t) f(y) g_P( y)=f(x)
$$
for  every  smooth function  $f$   compactly supported in $L_x.$   

The smallest positive solution of the  above  equation, denoted  by $p(x,y,t),$ is  called  {\it the heat kernel}. Such    a  solution   exists   because  $(L_x,g_P)$ is
complete and   of bounded  geometry  (see, for example,  \cite{CandelConlon2,Chavel}). 
 The  heat kernel   gives  rise to   a one  parameter  family $\{D_t:\ t\geq 0\}$ of  diffusion  operators    defined on bounded Borel measurable functions  on $X\setminus E:$
 \begin{equation}\label{e:diffusions}
 D_tf(x):=\int_{L_x} p(x,y,t) f(y) g_P (y),\qquad x\in X\setminus E.
 \end{equation}
 We record here  the  semi-group property  of this  family: 
\begin{equation}\label{e:semi_group}
D_0=\id\quad\text{and}\quad   D_t \mathbf{1} =\mathbf{1}\quad\text{and}\quad D_{t+s}=D_t\circ D_s \quad\text{for}\ t,s\geq 0,
\end{equation}
where $\mathbf{1}$ denotes the function which is identically equal to $1.$

We also  denote by  $\Delta_P$ the  Laplacian on the Poincar\'e disc $(\D,g_P),$ that is, for every function $f\in \Cc^2(\D),$
\begin{equation}\label{e:Delta_P}
{1\over \pi}(\Delta_P  f) g_P=\ddc f \qquad\text{on}\qquad \D.
\end{equation}
For every function $f\in \Cc^1(\D),$ we also denote by $|df|_P$  the length  of  the differential $df$ with respect to $g_P,$
that is,   
\begin{equation}\label{e:df_P}|df|_P=|df|\cdot g_P^{-1/2}\qquad \text{on}\qquad \D,
 \end{equation}
 where $|df|$ denotes the Euclidean norm of $df.$  Let $\dist_P$ denote the Poincar\'e distance on $(\D,g_P).$

Using   the map $\phi_x:\  \D\to L_x$ given in  (\ref{e:covering_map}), the following identity relates the diffusion operators in  $L_x$ and those  in
the  Poincar\'e disc $(\D,g_P):$ For   $x\in X$ and for every  bounded measurable  function $f$ defined on $L_x,$  
\begin{equation}\label{e:commutation}
 D_t(f\circ \phi_x)=(D_tf)\circ\phi_x, \qquad \textrm{on $L_x$  for all $t\in\R^+.$} 
\end{equation}
Here $\{D_t:\ t\geq 0\}$ on the left-hand side is the family of  the  heat diffusion associated to the  Poincar\'e disc,  see \cite[Proposition 2.7]{NguyenVietAnh17} for a proof.

Recall that a positive finite  measure  $\mu$ on the $\sigma$-algebra of Borel sets in $X$ with $\mu(E)=0$  is  said  to be 
{\it ergodic} if for every  leafwise  saturated  Borel measurable set $Z\subset X,$
   $\mu(Z)$ is  equal to either $\mu(X)$ or $0.$   A   directed positive harmonic  current $T$ is said to be {\it extremal}
   if  $T=T_1+T_2$ for   directed positive harmonic  current $T_1,T_2$ implies that $T_1=\lambda T$  for some $ \lambda \in  [0, 1].$
   
 The following    result which gives the link between directed positive harmonic currents, directed positive $\ddc$-closed currents and  harmonic measures 
 (see  e.g.  Theorem 2.9 and Proposition 2.17 in  \cite{NguyenVietAnh19}, see also \cite{DinhNguyenSibony12}).
 
 \begin{proposition}\label{P:harmonic_currents_vs_measures} 
Let $\Fc=(X,\Lc,E)$ be  a hyperbolic  foliation with  linearizable singularities  $E$ in a compact complex surface $X.$ 
\begin{enumerate}
\item Every  directed positive  harmonic  current extends, by trivial  extension across $E,$   to a   $\ddc$-closed  current on $X.$
In other words,  directed positive harmonic currents  are equivalent to   directed positive $\ddc$-closed currents. 
\item  
  The  relation  $T\mapsto \mu$ given  in \eqref{e:mu} is  a one-to-one  correspondence between the  convex  cone of  positive harmonic  currents $T$ directed by $\Fc$ and  
the convex  cone of  harmonic  measures $\mu$. 
\item $T$ is  extremal if and only if    $\mu$ given  in \eqref{e:mu} is  ergodic.
\item  Each  harmonic    measure $\mu$  is $D_t$-invariant, i.e, 
$$ \int_X  D_tf d\mu=\int_X fd\mu,  \qquad  f\in L^1(X,\mu),\  t\in\R^+.   $$
\end{enumerate}
\end{proposition}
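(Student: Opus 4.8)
\emph{Proof proposal.}
All four assertions are forms of Garnett's theory; the only genuinely delicate point is the behaviour of a directed positive harmonic current $T$ near the finite singular set $E$, and everything else is bookkeeping erected on top of it. So the plan is to settle the local extension problem first and then read off assertions (2)--(4).

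\emph{Part (1), the crux.} Let $T$ be a directed positive harmonic current on $X\setminus E$ and $a\in E$. Choosing linearizing coordinates $(z,w)$ near $a$ in which the leaves are the integral curves of $z{\partial\over\partial z}+\lambda w{\partial\over\partial w}$, I would combine the Hartogs-type local description \eqref{e:decomposition} of $T$ on the flow boxes covering a punctured neighbourhood of $a$ with the estimates near linearizable singularities from \cite{DinhNguyenSibony12} to show that $T$ has locally finite mass up to $a$. Consequently the trivial extension $\widetilde T:=\mathbf{1}_{X\setminus E}T$ is a well-defined positive $(1,1)$-current on $X$ putting no mass on $E$; moreover $\ddc T=0$ on $X\setminus E$ in the sense of currents on $X$, since for $T$ directed the pairing $\langle T,\ddc f\rangle$ only involves the leafwise Laplacian of $f$ and thus vanishes for $f$ compactly supported off $E$ by harmonicity. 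Hence $\ddc\widetilde T$ is a $(2,2)$-current supported on the finite set $E$, so $\ddc\widetilde T=\sum_{a\in E}c_a\,\delta_a$. To show that every $c_a$ vanishes one pairs $\ddc\widetilde T$ with cut-off functions equal to $1$ near $a$ and supported in shrinking balls, using the precise local behaviour of the densities $h_\alpha$ of $T$ near $a$ dictated by the linear model; this is exactly the content of the trivial-extension statement of \cite{DinhNguyenSibony12}. Conversely, any directed positive $\ddc$-closed current on $X$ restricts on $X\setminus E$ to a directed positive harmonic current, which gives the stated equivalence of notions.

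\emph{Part (2).} Given such a $T$, put $\mu:=T\wedge g_P$ on $X\setminus E$ and $\mu(E):=0$; it is finite by the Poincar\'e mass bound recalled in the introduction. For $f\in\Dc(\Fc)$, formula \eqref{e:Laplacian} gives $T\wedge\ddc f=\tfrac1\pi(\Delta_Pf)\,T\wedge g_P=\tfrac1\pi(\Delta_Pf)\,d\mu$, hence $\int_X\Delta_Pf\,d\mu=\pi\langle T,\ddc f\rangle=0$ and $\mu$ is harmonic. Conversely, given a harmonic measure $\mu$, one disintegrates it over a transversal on each flow box; Garnett's argument shows that, after dividing by the leafwise area form $g_P$, the conditional measures are given by positive harmonic functions on the plaques, producing a current of the shape \eqref{e:decomposition} on each flow box, and the local pieces glue to a directed positive harmonic current $T$ with $T\wedge g_P=\mu$. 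The two constructions are mutually inverse, and $T\mapsto T\wedge g_P$ is injective because $g_P$ is nowhere zero on $X\setminus E$, so $T$ is recovered from the measure $T\wedge g_P$ together with the transverse measure; both maps being additive and positively homogeneous, they give the stated isomorphism of convex cones.

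\emph{Parts (4) and (3), and the main obstacle.} For (4), fix $f\in\Dc(\Fc)$; since $s\mapsto D_sf$ solves the leafwise heat equation, $\mu(E)=0$, and $\mu$ is finite, one may differentiate under the integral to get $\tfrac{d}{ds}\int_X D_sf\,d\mu=\int_X\Delta_P(D_sf)\,d\mu=0$ (the integrand is the leafwise Laplacian of a leafwise-smooth bounded function and $\mu$ is harmonic), whence $\int_X D_tf\,d\mu=\int_X f\,d\mu$; the case $f\in L^1(X,\mu)$ then follows by density and the $L^1(\mu)$-contraction of $D_t$. For (3), the correspondence of (2) is affine, so $T$ is extremal in the cone of directed positive harmonic currents exactly when $\mu$ is extremal in the cone of harmonic measures; by (4) and the converse direction of Garnett's theorem the harmonic probability measures are precisely the $D_t$-stationary ones, the usual ergodic-decomposition argument identifies the extremal ones with the $D_t$-ergodic ones, and since $D_t\mathbf{1}_Z=\mathbf{1}_Z$ for $Z$ leafwise saturated while conversely every $D_t$-invariant Borel set agrees $\mu$-a.e.\ with a saturated one, $D_t$-ergodicity coincides with ergodicity in the sense of the statement. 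I expect the genuine work to be Part (1): both the finiteness of the mass of $T$ up to $E$ and the vanishing of the residual Dirac masses $c_a$ rely on a precise analysis of $T$ near the singularities, and this is exactly where the linearizability hypothesis enters, through the explicit model $z{\partial\over\partial z}+\lambda w{\partial\over\partial w}$ and the resulting description of the leaves and their holonomy; once (1) is in place, (2)--(4) are essentially formal, with the only mildly subtle point being the $\mu$-a.e.\ identification in (3) of $D_t$-invariant Borel sets with leafwise saturated ones.
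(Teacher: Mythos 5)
The paper does not actually prove this proposition: it is quoted verbatim from the literature (Theorem 2.9 and Proposition 2.17 in \cite{NguyenVietAnh19}, together with \cite{DinhNguyenSibony12}), so there is no internal proof to compare against; your sketch follows the same route as those references (local analysis at the linearizable singularities for the extension, Garnett-type disintegration for the correspondence $T\mapsto\mu$, and the affine-cone argument for extremality versus ergodicity). Two points in your write-up, however, are genuine gaps rather than bookkeeping. First, in Part (1) the step ``$\ddc\widetilde T$ is supported on $E$, hence equals $\sum_a c_a\delta_a$'' is not automatic: a top-degree current supported at a point may involve derivatives of the Dirac mass, and excluding them (as well as proving local finiteness of the mass of $T$ near $E$ and then $c_a=0$) is precisely the hard local analysis near the linear model --- the part you defer to \cite{DinhNguyenSibony12} with ``this is exactly the content of the trivial-extension statement.'' As a citation that is legitimate (it is what the paper itself does), but as a proof it leaves the crux unproved; the actual arguments use the finite-mass estimate for directed harmonic currents near linearizable points and cut-off functions adapted to $\log|z|,\log|w|$ so that the error terms $\langle T,\varphi\,\ddc\chi_\epsilon\rangle$, $\langle T,d\varphi\wedge\dc\chi_\epsilon\rangle$ are controlled, rather than a naive $\epsilon^{-2}$ cut-off.

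Second, and more seriously, your proof of Part (4) does not go through as written. The harmonicity of $\mu$ only gives $\int_X\Delta_Pf\,d\mu=0$ for $f\in\Dc(\Fc)$, i.e. for leafwise smooth functions with transversally smooth dependence that are \emph{compactly supported in} $X\setminus E$. The function $D_sf$ is in general neither compactly supported away from $E$ (the diffusion spreads along leaves that accumulate on the singularities) nor transversally regular enough to lie in $\Dc(\Fc)$, so the identity $\frac{d}{ds}\int_X D_sf\,d\mu=\int_X\Delta_P(D_sf)\,d\mu=0$ is not licensed by the definition of harmonic measure, and the interchange of $\frac{d}{ds}$ with the integral also needs justification near $E$. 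This is exactly where \cite{DinhNguyenSibony12} (and Garnett in the nonsingular case) invest real work: the invariance $\int D_tf\,d\mu=\int f\,d\mu$ is obtained through a functional-analytic treatment of $\Delta_P$ as a densely defined operator generating the semigroup $(D_t)$ on $L^1(\mu)$ (Hille--Yosida), not by testing $\Delta_P(D_sf)$ directly against $\mu$. Parts (2) and (3) are fine as sketches once (1) and (4) are in place.
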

 


 \subsection{Local model for hyperbolic  singularities,  regular and singular flow boxes}
 \label{SS:local_model}
 
   To study $\Fc$ near  a hyperbolic  singularity $a,$ we use the following {\bf local model} introduced in \cite{DinhNguyenSibony14a}.
In this model, a neighborhood of $a$ is identified with the  bidisc $\D^2,$ and the restriction of $\Fc$  to $\D^2,$  i.e., the leaves of $(\D^2,\Lc,\{0\})$  coincide with 
the restriction to $\D^2$ of the   
 integral curves of a  vector field 
 \begin{equation}\label{e:Z}Z(z,w) = z {\partial\over \partial z}
+ \lambda w{\partial\over \partial w}\quad\text{with some}\quad \lambda\in\C\setminus \R. 
\end{equation}
 
For $x=(z,w)\in \D^2\setminus\{0\}$, define the holomorphic map  $\psi_x:\C\rightarrow\C^2\setminus\{0\}$ 
\begin{equation}\label{e:leaf_equation}
\psi_x(\zeta):=\Big( ze^{i\zeta},we^{i\lambda\zeta}\Big)\quad \mbox{for}\quad \zeta\in\C.
\end{equation}
It is easy to see that $\psi_x(\C)$ is the integral curve of $Z$ which contains
$\psi_x(0)=x$.
Write $\zeta=u+iv$ with $u,v\in\R$. The domain $\Pi_x:=\psi_x^{-1}(\D^2)$ in $\C$ is defined by the inequalities
$$(\Im \lambda)u+(\Re\lambda)v >\log|w|\qquad \text{and}\qquad v > \log|z| .$$
So, $\Pi_x$ defines a sector $\S_x$ in $\C.$ It contains $0$ since $\psi_x(0)=x$.  
The leaf of $\Fc$ through $x$ contains  the  Riemann surface 
\begin{equation}\label{e:Riemann_surface_L_x}
\widehat L_x:=\psi_x(\Pi_x)\subset L_x.
\end{equation}
In particular, the leaves in a singular flow box are parametrized   using holomorphic maps $\psi_x:\Pi_x\to L_x.$

 Now  let  $\Fc=(X,\Lc,E)$ be a  foliation on a   Hermitian compact complex surface $(X,g_X).$
Assume as usual that $E$  is  finite  and   all  points of $E$  are  linearizable. 
Let $\dist$ be the  distance on $X$ induced  by the ambient  metric $g_X.$
We only consider flow boxes which are biholomorphic to $\D^2$. A {\it regular flow box} $\U$ is a flow box with  foliated chart $\Phi:\ \U\to \B\times\Sigma$ outside the singularities, where 
$\B$ and $\Sigma$ are open sets in $\C.$ 
 For each  $\alpha\in\Sigma,$ the  Riemann surface $V_\alpha:=\Phi^{-1}( \B\times\{\alpha\})$ is called a plaque  of $\U.$ 
{\it Singular flow boxes} are identified to their models $(\D^2,\Lc,\{0\})$ as 
described above. 
 For  $\U:=\D^2$ and  $s>0,$  let
 $s\U:= (s\D)^2.$
For each singular point $a\in E$, we fix a singular flow box $\U_a$ such that $2\U_a\cap 2\U_{a'}=\varnothing$ if $a,a'\in E$ with $a\not=a'$.
We also cover $X\setminus \cup_{a\in E} \U_a$ by a finite 
number of regular flow boxes $(\U_q)$ such that  each $\overline \U_q$ is contained in a larger regular flow box $\U'_q$ with $\U'_q\cap \cup_{a\in E} (1/2)\U_a =\varnothing.$ 
Thus  we obtain a  finite  cover $\Uc:=(\U_p)_{p\in I} $ of $X$  consisting  of  regular  flow boxes $(\U_p)_{p\in I\setminus E}$ and  singular ones $(\U_a)_{a\in E}.$  

We often suppose without loss of generality that  the ambient metric $g_X$ coincides with the standard Euclidean metric on each singular flow box $2\U_a\simeq 2\D^2,$ $a\in E.$ For $x=(z,w)\in\C^2,$ let
 $\|x\|:=\sqrt{|z|^2+|w|^2}$ be the  standard Euclidean norm of $x.$ Recall  that
 $\lof(\cdot):=1+|\log(\cdot)|.$

Recall from \cite[Proposition 3.3]{DinhNguyenSibony14b} (see also  \cite[Lemma 2.4]{NguyenVietAnh18b}) the following   precise estimate on the  function $\eta$  introduced  in  (\ref{e:relation_Poincare_Hermitian_metrics}).   

\begin{lemma} \label{L:Poincare}
We keep  the above  hypotheses
and notation.  Suppose  in addition that  $\Fc$ is Brody hyperbolic. Then there exists  a  constant $c>1$  with the following properties.
\begin{enumerate} 
\item $\eta \leq c$ on $X$, $\eta\geq c^{-1}$ outside the singular flow boxes $\cup_{a\in E}{1\over 4}\U_a$ and 
$$c^{-1} \cdot s \lof s \leq\eta(x)  \leq c \cdot s \lof s$$
for $x\in X\setminus E$  and $s:=\dist(x,E).$  
\item  For every  $x$ in  a singular box  which is identified with $(\D^2,\Lc,\{0\})$   as above,
 for every $\zeta\in\Pi_x,$
 $$
    c^{-1}\cdot {i d\zeta\wedge d\bar\zeta \over (\lof (\psi_x(\zeta)))^2}\leq (\psi_x^* g_P) (\zeta)\leq  c\cdot {i d\zeta\wedge d\bar\zeta \over (\lof (\psi_x(\zeta)))^2}.
 $$
 \end{enumerate}
\end{lemma}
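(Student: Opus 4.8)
The plan is to establish this — it is \cite[Proposition 3.3]{DinhNguyenSibony14b}, see also \cite[Lemma 2.4]{NguyenVietAnh18b} — by treating three regimes separately and then deducing part (2) from part (1). The bound $\eta\le c$ on $X$ is literally the definition of Brody hyperbolicity (Definition \ref{D:Brody}). Off $E$ every leaf is hyperbolic (Remark \ref{R:hyperbolicty}), so the uniformizing maps $\phi_x$ of \eqref{e:covering_map} are immersions and $\eta>0$ on $X\setminus E$; moreover $\eta$ is continuous there, because along $X\setminus E$ the plaques and the maps $\phi_x$ depend continuously on $x$ — here Brody hyperbolicity enters through the fact, visible from \eqref{e:relation_Poincare_Hermitian_metrics} and $\eta\le c$, that $\{\phi_x\}$ is uniformly Lipschitz from $(\D,g_P)$ into $(X,g_X)$, hence a normal family (cf. \cite{FornaessSibony08}). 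Since $X\setminus\bigcup_{a\in E}\frac14\U_a$ is compact, $\eta$ attains there a strictly positive minimum; as $\dist(\cdot,E)\approx 1$ on this set, this yields both $\eta\ge c^{-1}$ outside the singular boxes and the two-sided bound $c^{-1}s\lof s\le\eta(x)\le cs\lof s$ there. It therefore remains to prove the two-sided bound for $x$ in a singular box $\U_a=\D^2$ with $s:=\|x\|\le\frac12$ (the range $\frac12<\|x\|<1$ inside the box being contained in $X\setminus\bigcup_a\frac14\U_a$), together with part (2); recall that $\U_a$ carries the vector field \eqref{e:Z}.

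For such $x=(z,w)$ I would use the parametrization $\psi_x\colon\Pi_x\to\widehat L_x\subset L_x$ of \eqref{e:leaf_equation} and \eqref{e:Riemann_surface_L_x}, with $\psi_x(0)=x$. Two elementary observations drive the estimate. (i) $\Pi_x=\psi_x^{-1}(\D^2)$ is the intersection of the two half-planes written after \eqref{e:leaf_equation}; since $\Im\lambda\ne0$ these are not parallel, so $\Pi_x$ is a wedge (a half-plane when $zw=0$) of opening angle bounded below by a constant depending only on $\lambda$, and a short case analysis on the relative sizes of $|z|$ and $|w|$ shows that $0$ stays angularly bounded away from the edges and that $\dist(0,\partial\Pi_x)\approx|\log s|$. (ii) $\psi_x'(0)=(iz,i\lambda w)$, so $\|\psi_x'(0)\|_{g_X}\approx s$. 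The lower bound on $\eta$ follows at once: every holomorphic disc into $\widehat L_x$ is one into $L_x$, so $\eta(x)\ge\eta_{\widehat L_x}(x)$, and as $\widehat L_x\cong\Pi_x$ is simply connected, Schwarz's lemma together with Koebe's distortion theorem give $\eta_{\widehat L_x}(x)\approx\|\psi_x'(0)\|_{g_X}\cdot\dist(0,\partial\Pi_x)\approx s|\log s|\approx s\lof s$.

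The hard part will be the matching upper bound $\eta(x)\le cs\lof s$, i.e. that $L_x$ is not too spread out near $x$; this is where Brody hyperbolicity is genuinely needed. Since the component of $L_x\cap\frac34\U_a$ through $x$ is simply connected, $\psi_x$ restricted to $\psi_x^{-1}(\frac34\U_a)$ lifts through the Riemannian covering $\phi_x\colon(\D,g_P)\to(L_x,g_P)$ to an injective holomorphic $\tilde\psi_x\colon\psi_x^{-1}(\frac34\U_a)\to\D$ with $\phi_x\circ\tilde\psi_x=\psi_x$ and $\tilde\psi_x(0)=0$; by the chain rule $\eta(x)\approx\|\psi_x'(0)\|_{g_X}/|\tilde\psi_x'(0)|\approx s/|\tilde\psi_x'(0)|$, so it suffices to show $|\tilde\psi_x'(0)|\gtrsim 1/|\log s|$. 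Now any path in $L_x$ from $x$ (with $\|x\|\le\frac12$) to $\partial(\frac34\U_a)$ has ambient length at least $\frac14$, hence, by \eqref{e:relation_Poincare_Hermitian_metrics} and $\eta\le c$ (which give $g_P\ge c^{-2}g_X$ on leaves), Poincaré length at least a fixed $r_0>0$; so the Poincaré ball $B_P(x,r_0)$ lies inside $L_x\cap\frac34\U_a$, and since $\phi_x$ is a local isometry the image $W_0:=\tilde\psi_x(\psi_x^{-1}(\frac34\U_a))\subset\D$ contains the Poincaré ball $B_P(0,r_0)$, i.e. a Euclidean disc about $0$ of fixed radius. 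Hence the Poincaré density of $W_0$ at $0$ is bounded above by a universal constant, and since $\tilde\psi_x$ is a biholomorphism onto $W_0$ this forces $|\tilde\psi_x'(0)|\gtrsim 1/\dist\big(0,\partial\psi_x^{-1}(\tfrac34\U_a)\big)\approx 1/|\log s|$ — the sub-sector $\psi_x^{-1}(\frac34\U_a)$ has the same opening angle as $\Pi_x$ and a vertex displaced by $O(1)$ — which gives the upper bound.

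Finally, part (2) should come out formally: by \eqref{e:relation_Poincare_Hermitian_metrics}, $\psi_x^*g_P=(\eta\circ\psi_x)^{-2}\,\psi_x^*(g_X|_{L_x})$, where $\psi_x^*(g_X|_{L_x})(\zeta)=\|\psi_x'(\zeta)\|_{g_X}^2\,id\zeta\wedge d\bar\zeta\approx\|\psi_x(\zeta)\|^2\,id\zeta\wedge d\bar\zeta$ (the two components of $\psi_x'(\zeta)$ have moduli $|z'|$ and $|\lambda|\,|w'|$, with $(z',w')=\psi_x(\zeta)$), while $\eta(\psi_x(\zeta))\approx\|\psi_x(\zeta)\|\,\lof(\psi_x(\zeta))$ by part (1); dividing yields the asserted bound. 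I expect the only real difficulty in the whole scheme to be the upper bound of the third paragraph, along with the routine bookkeeping needed to make the sector estimate in (i) uniform in $x$ and over the finitely many singular points.
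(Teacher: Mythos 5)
The paper itself gives no proof of this lemma; it simply recalls it from \cite[Proposition 3.3]{DinhNguyenSibony14b} (and \cite[Lemma 2.4]{NguyenVietAnh18b}), so there is no internal proof to compare against. Your reconstruction is correct and follows the same Koebe--Schwarz strategy on the sector $\Pi_x$ that those references use: the trivial upper bound $\eta\le c$ from Brody hyperbolicity, compactness off the singular boxes, the lower bound from the hyperbolic density of the convex wedge $\Pi_x$ via Koebe's $1/4$-theorem, the matching upper bound by lifting $\psi_x|_{\psi_x^{-1}(3/4\,\U_a)}$ through the universal cover $\phi_x$ and using $g_P\ge c^{-2}g_X$ to trap a fixed Poincar\'e ball inside the lift, and finally part~(2) deduced formally from part~(1) via \eqref{e:relation_Poincare_Hermitian_metrics} and $\|\psi_x'(\zeta)\|\approx\|\psi_x(\zeta)\|$.

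One small inaccuracy worth flagging: your phrases ``$\widehat L_x\cong\Pi_x$'' and ``the component of $L_x\cap\frac34\U_a$ through $x$ is simply connected'' are only literally true for $x=(z,w)$ with $zw\ne 0$. On a separatrix (say $w=0$), $\psi_x:\Pi_x\to\widehat L_x$ is the $\Z$-cover of a punctured disc, $\widehat L_x\cap\frac34\U_a$ is not simply connected, and $\tilde\psi_x$ is a covering rather than a biholomorphism onto $W_0$. The argument nevertheless survives unchanged: what is actually used in both the lower and upper bound is that $\Pi_x$ (respectively $\psi_x^{-1}(\frac34\U_a)$) is simply connected and that $\psi_x$ (respectively $\tilde\psi_x$) is the \emph{universal cover} of its image, so that the pullback identity $\lambda_{W_0}(0)\,|\tilde\psi_x'(0)|=\lambda_{\psi_x^{-1}(3/4\,\U_a)}(0)$ still holds. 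It would be cleaner to phrase the step in those terms so that the separatrix case is visibly included rather than implicitly excluded.
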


We  revisit  the  local model by describing  its special flow box  structure.
  Notice that
if we flip $z$ and $w,$ we replace $\lambda$ by $\lambda^{-1}.$ Since $(\Im\lambda)( \Im\lambda^{-1})<0,$ we
may assume   below that the axes are chosen so that $\Im \lambda > 0.$
Consider the ring $\A$ defined by
$$
\A := \left\lbrace  \alpha\in \C,\ e^{-2\pi \Im \lambda } < |\alpha| \leq 1 \right\rbrace.$$
Define also the sector $\S$   by
$$
\S := \left\lbrace \zeta = u + iv \in \C,\ v > 0\quad\text{and}\quad (\Im \lambda)u + (\Re \lambda)v > 0\right\rbrace.
$$
Note that the sector $\S$ is contained in the upper half-plane $\H := \{u+iv,\ v > 0\}.$ 
For $\alpha\in  \C^* ,$ consider 
the following holomorphic map $\tilde\psi_\alpha:\ \C\to(\C^*)^2:$ 
\begin{equation}\label{e:leaf_equation_bis}\tilde\psi_\alpha:= \big( e^{i(\zeta +\log{ |\alpha|/\Im \lambda})},\alpha e^{i\lambda (\zeta +\log {|\alpha|/\Im \lambda})}\big)\quad \text{for}\quad \zeta = u + iv \in \C.
\end{equation}
Note that the map $\tilde\psi_\alpha$  is injective because $\lambda \not\in\R.$
Let  $\mathcal  L'_\alpha$ be the image  of  $\tilde\psi_\alpha.$  This is a Riemann surface immersed in $\C^2.$ 
Note also that
\begin{equation}\label{e:|z|}|z| = e^{-v} 
\qquad\text{and}\qquad   |w| = e^{-(\Im \lambda) u-(\Re \lambda) v}.
\end{equation} It is easy to check
the following properties
\begin{enumerate}
\item $\mathcal L'_\alpha$ is tangent to the vector field $Z$ given in \eqref{e:Z} and is a submanifold of $\C^{*2} .$
\item  $\mathcal L'_{\alpha_1}$ is equal to $\mathcal L'_{\alpha_2}$ if $\alpha_1 /\alpha_2 = e^{2ki\lambda \pi}$ for some $k\in\Z$ and they are
disjoint otherwise. In particular, $\mathcal L'_{\alpha_1}$ and $\mathcal L'_{\alpha_2}$ are disjoint if $\alpha_1 , \alpha_ 2 \in \A$
and $\alpha_1 \not = \alpha_2 .$
\item  The union of $\mathcal L'_\alpha$ is equal to $\C^{*2}$ for $\alpha \in \C^* ,$ and then also for $\alpha\in  \A.$
\item  The intersection $\mathcal L_\alpha :=\mathcal  L'_\alpha \cap \D^2$ of $\mathcal L'_\alpha$ with the unit bidisc $\D^2$ is given by
the same equations as in the definition of $\mathcal L'_\alpha$  but with $\zeta \in \S.$ Moreover,
$\mathcal L_\alpha$ is a connected submanifold of $\D^{*2} .$ In particular, it is a leaf of $\Lc \cap \D^2 .$
\end{enumerate}

 
 \subsection{Measure theory on  sample-path spaces}
 \label{SS:Wiener}
  
In this  subsection
 we follow the   presentation  given in Section 2.2, 2.4 and 2.5 in  \cite{NguyenVietAnh17}, which are,  in turn,  inspired by
 Garnett's theory   of leafwise  Brownian  motion in \cite{Garnett} (see also \cite{Candel03,CandelConlon2}).
 This   exposition is  equivalent  to that given in \cite[Subsection 2.4]{NguyenVietAnh18b}.
 
 We first recall the construction of the Wiener measure $W_0$ on the  Poincar\'e  disc $(\D,g_P).$
 Let $\Omega_0$ be  the space consisting of  all continuous  paths  $\omega:\ [0,\infty)\to  \D$  with $\omega(0)=0.$
A {\it  cylinder  set (in $\Omega_0$)} is a 
 set of the form
$$
C=C(\{t_i,B_i\}:1\leq i\leq m):=\left\lbrace \omega \in \Omega_0:\ \omega(t_i)\in B_i, \qquad 1\leq i\leq m  \right\rbrace.
$$
where   $m$ is a positive integer  and the $B_i$'s are Borel subsets of $\D,$ 
and $0< t_1<t_2<\cdots<t_m$ is a  set of increasing times.
In other words, $C$ consists of all paths $\omega\in  \Omega_0$ which can be found within $B_i$ at time $t_i.$
Let $\Ac_0$ be the  $\sigma$-algebra on $\Omega_0$  generated  by all  cylinder sets.
 For each cylinder  set  $C:=C(\{t_i,B_i\}:1\leq i\leq m)$ as  above, define
\begin{equation}\label{eq_formula_W_x_without_holonomy}
W_x(C ) :=\Big (D_{t_1}(\chac_{B_1}D_{t_2-t_1}(\chac_{B_2}\cdots\chac_{B_{m-1}} D_{t_m-t_{m-1}}(\chac_{B_m})\cdots))\Big) (x),
\end{equation}
where,  $\chac_{B_i}$
is the characteristic function of $B_i$ and $D_t$ is the diffusion operator
given  by  (\ref{e:diffusions}) where  $p(x,y,t)$  is   the heat kernel    of the Poincar\'e disc $(\D,g_P).$ 
 It is  well-known that $W_0$ can be   extended   to a unique probability measure on $(\Omega_0,\Ac_0).$
This  is the {\it canonical  Wiener measure}  at $0$  on the Poincar\'e disc.

Let $\Fc=(X,\Lc,E)$ be  a hyperbolic Riemann surface foliation with  the set of singularities $E.$  Consider the leafwise Poincar\'e metric $g_P.$
Recall from  Introduction that   $\Omega:=\Omega(\Fc) $  is  the space consisting of  all continuous  paths  $\omega:\ [0,\infty)\to  X\setminus E$
with image fully contained  in a  single   leaf. This  space  is  called {\it the sample-path space} associated to  $\Fc.$
  Observe that
$\Omega$  can be  thought of  as the  set of all possible paths that a 
Brownian particle, located  at $\omega(0)$  at time $t=0,$ might  follow as time  progresses.
For each $x \in  X\setminus E,$
let $\Omega_x=\Omega_x(\Fc)$ be the  space  of all continuous
leafwise paths  starting at $x\in X\setminus E$ in $\Fc,$ that is,
$$
\Omega_x:=\left\lbrace   \omega\in \Omega:\  \omega(0)=x\right\rbrace.
$$
For each $x\in X\setminus E,$    the  following  mapping 
\begin{equation}\label{e:Omega_0_vs_Omega_x}
\Omega_0\ni\omega \mapsto  \phi_x\circ\omega\quad
 \text{maps}\quad  \Omega_0\quad\text{bijectively onto}\quad \Omega_x,
 \end{equation}
 where   $\phi_x:\D\to L_x$ is given in (\ref{e:covering_map}).   
Using this bijection  
we obtain   a  natural $\sigma$-algebra    $ \Ac_x$ on  the  space $\Omega_x,$ and  a natural    probability (Wiener) measure $W_x$  on $\Ac_x$  as follows:
\begin{equation}\label{e:W_x}
 \Ac_x:=\{ \phi_x\circ A:\  A\in \Ac_0\}\quad\text{and}\quad    W_x(\phi_x\circ A):=W_0(A),\qquad   A\in  \Ac_0,
\end{equation}
   where $\phi_x\circ A:= \{\phi_x\circ \omega:\ \omega\in A \}\subset \Omega_x.$

For any   function $F\in L^1(\Omega_x,\Ac_x,W_x),$
 the {\it  expectation} of $F$ at $x$ is  the  number
\begin{equation}\label{e:expectation}
\E_x[F]:=\int_{\Omega_x} F(\omega)dW_x(\omega).
\end{equation}
  It is well-known (see \cite[Proposition C.3.8]{CandelConlon2}) that for any  bounded Borel measurable  function $f$ on $L_x,$
\begin{equation}\label{e:expectation_vs_diffusion}
\E_{x}[f(\bullet(t))]=(D_tf)(x),\qquad  t\in\R^+,
\end{equation} 
where $f(\bullet(t))$ is  the  function  given by $\Omega\ni\omega\mapsto f(\omega(t)).$

\subsection{Holonomy cocycle}

Now  we recall  from   \cite[Subsection 2.5 and Section 3]{NguyenVietAnh18b}   
the holonomy cocycle   as well as its properties of a hyperbolic foliation $\Fc=(X,\Lc,E)$ on a Hermitian  complex surface $X.$
For each point $x\in X\setminus E,$ let $\Tan_x(X)$ (resp.  $\Tan_x(L_x)\subset \Tan_x(X)$) be    the tangent space  of $X$ (resp. $L_x$) at $x.$
 For every transversal    $S$ at a point $x$ (that is, $x\in S$), let  $\Tan_x(S)$ denote the tangent space of $S$ at $x.$  

Now fix a point $x\in X\setminus E$ and a path $\omega\in\Omega_{x}$  and a time $t\in \R^+,$ and  let $y:=\omega(t).$
Fix  a transversal $S_x$ at $x$ (resp. $S_y$  at $y$) such that the complex line
$\Tan_{x}(S_x)$ is the  orthogonal complement  of the complex line $\Tan_{x}(L_{x})$ in the Hermitian space $(\Tan_x(X),(g_X)_x)$
(resp.        $\Tan_{y}(S_y)$ is the  orthogonal complement  of $\Tan_{y}(L_{y})$ in $(\Tan_{y}(X),(g_X)_y)$). 
Let  $\hol_{\omega,t}$  be the  holonomy map along the path  $\omega|_{[0,t]}$ from   an open  neighborhood of $x$ in $S_x$   onto
 an open  neighborhood of $y$ in $S_y.$     The  derivative  $D \hol_{\omega,t}:\
\Tan_{x}(S_x)\to \Tan_{y}(S_y) $ induces 
the so-called {\it holonomy cocycle} $\mathcal H:\ \Omega\times\R^+\to\R^+$ given by  $$\mathcal H(\omega,t):=
  \|D \hol_{\omega,t}(x)\|.$$ 
 The last  map depends  only on the path  $\omega|_{[0,t]},$ in fact, it
depends only on the  homotopy class of  this path.
 In particular, it is  independent of   the choice  of transversals  $S_x$ and $S_y.$ 
 We see easily that
 \begin{equation}\label{e:hol_cocycle_dist}
\mathcal H(\omega,t) =\lim_{z\to x,\ z\in S_x} \dist(\hol_{\omega,t}(z), y)/ \dist(z,x).
  \end{equation}

 The  following result gives an explicit  expression for  $\mathcal H$ near a singular point $a$ 
 using the local model  $(\D^2,\Lc,\{0\})$ introduced in  Subsection   \ref{SS:local_model}.  
\begin{lemma}\label{L:holonomy} {\rm (\cite[Proposition 3.1]{NguyenVietAnh18b})} Let $\D^2$ be endowed with the  Euclidean metric.
For each  $x=(z,w)\in\D^2,$ consider the function  $\Phi_x:\  \Pi_x\to\R^+$   as follows.
For $\zeta\in \Pi_x,$ consider a path $\omega\in \Omega$ (it always exists since $\Pi_x$ is convex and $0\in\Pi_x$ as $x\in\D^2$) such  that
$$
\omega(t)= \psi_x(t\zeta)= (z e^{i \zeta t},we^{i\lambda\zeta t})\subset \D^2
$$
for all $t\in [0,1]$ (see \eqref{e:leaf_equation} above). 
Define $\Phi_x(\zeta):=  
\mathcal H(\omega,1).$
Then 
$$
\Phi_x(\zeta)=|e^{i \zeta}||e^{i \lambda\zeta}|{\sqrt{| z|^2+|\lambda w|^2}\over \sqrt{ | ze^{i\zeta}|^2+|\lambda we^{i \lambda\zeta}|^2           }}. 
$$
\end{lemma}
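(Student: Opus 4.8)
The plan is to realize the holonomy map explicitly through the ``flow'' underlying the local model, and thereby reduce the computation of $\mathcal H(\omega,1)$ to a short linear-algebra computation involving one orthogonal projection. First I would introduce, for $\zeta\in\C$, the linear biholomorphism $g_\zeta$ of $\C^2$ given by $g_\zeta(z',w'):=(z'e^{i\zeta},w'e^{i\lambda\zeta})$; by \eqref{e:leaf_equation} one has $\psi_{x'}(\zeta)=g_\zeta(x')$ for $x'=(z',w')$, so the leaf through $x'$ is swept out by $\zeta\mapsto g_\zeta(x')$ and each $g_\zeta$ maps leaves of the local model to leaves. Its differential at every point is the diagonal matrix $\diag(e^{i\zeta},e^{i\lambda\zeta})$, and $\tfrac{\partial}{\partial\zeta}g_\zeta(x')=iZ(g_\zeta(x'))$ spans the leaf direction at $g_\zeta(x')$. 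Note also that $\omega$ is well defined since $\Pi_x$ is convex and contains $0$, and that $\mathcal H$ depends only on the homotopy class of the path, so the choice $\omega(t)=\psi_x(t\zeta)$ is harmless.

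Next I would describe $\hol_{\omega,1}$ near $x$: for $x''\in S_x$ close to $x$, the implicit function theorem gives a unique $\zeta(x'')$ close to $\zeta$ with $g_{\zeta(x'')}(x'')\in S_y$, and $\hol_{\omega,1}(x'')=g_{\zeta(x'')}(x'')$, where $y:=g_\zeta(x)=\psi_x(\zeta)$. Differentiating along a curve $x''=x+s\xi$ with $\xi\in\Tan_x(S_x)$, at $s=0$, gives
$$D\hol_{\omega,1}(x)\xi \;=\; Dg_\zeta(x)\xi \;+\; \zeta'(0)\,iZ(y).$$
Since the left-hand side lies in $\Tan_y(S_y)$ (the image curve sits in $S_y$) while $iZ(y)$ spans $\Tan_y(L_y)$, the vector $D\hol_{\omega,1}(x)\xi$ is exactly the image of $Dg_\zeta(x)\xi$ under the projection $\Tan_y(X)\to\Tan_y(S_y)$ along $\Tan_y(L_y)$. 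Because $S_x$ and $S_y$ were chosen as the orthogonal complements of $\Tan_x(L_x)$ and $\Tan_y(L_y)$ for the Euclidean metric, this oblique projection coincides with the orthogonal projection $\pi_y$ onto $\Tan_y(L_y)^\perp$; hence
$$\mathcal H(\omega,1) \;=\; \frac{|D\hol_{\omega,1}(x)\xi|}{|\xi|} \;=\; \frac{|\pi_y(Dg_\zeta(x)\xi)|}{|\xi|}$$
for any nonzero $\xi\in\Tan_x(S_x)$.

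Finally I would carry out the computation. The leaf direction at $x$ is spanned by $Z(x)=(z,\lambda w)$, so one may take $\xi=(\overline{\lambda w},-\bar z)$, with $|\xi|^2=|z|^2+|\lambda w|^2$, giving $v:=Dg_\zeta(x)\xi=(e^{i\zeta}\overline{\lambda w},\,-e^{i\lambda\zeta}\bar z)$. With $Z(y)=(ze^{i\zeta},\lambda we^{i\lambda\zeta})$ one has $|\pi_y(v)|^2=|v|^2-|\langle v,Z(y)\rangle|^2/|Z(y)|^2$, and after clearing denominators the numerator of the resulting fraction collapses (the cross terms cancel) to the perfect square $|e^{i\zeta}|^2|e^{i\lambda\zeta}|^2\,(|z|^2+|\lambda w|^2)^2$; dividing by $|Z(y)|^2|\xi|^2$ one obtains
$$\mathcal H(\omega,1)^2 \;=\; \frac{|e^{i\zeta}|^2|e^{i\lambda\zeta}|^2\,(|z|^2+|\lambda w|^2)}{|ze^{i\zeta}|^2+|\lambda we^{i\lambda\zeta}|^2},$$
which is the square of the asserted formula. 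The \textbf{main point to get right} is the identification of $D\hol_{\omega,1}(x)$ with the projection $\pi_y\circ Dg_\zeta(x)|_{\Tan_x(S_x)}$, i.e.\ that the reparametrization term $\zeta'(0)\,iZ(y)$ is precisely what is annihilated by passing to $\Tan_y(S_y)$, together with the remark that the choice of orthogonal transversals turns this oblique projection into an orthogonal one; once these are in place, the remaining algebraic simplification is elementary.
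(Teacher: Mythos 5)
Your proof is correct. Since the paper itself does not prove this lemma but merely cites it from \cite[Proposition 3.1]{NguyenVietAnh18b}, there is no in-paper proof to compare against; but your approach — realizing the holonomy as the flow $g_\zeta$ of $iZ$, obtaining $D\hol_{\omega,1}(x)\xi = Dg_\zeta(x)\xi + \zeta'(0)\,iZ(y)$ by the implicit function theorem, observing that the reparametrization term is killed by passing to $\Tan_y(S_y)$, and noting that the orthogonal choice of transversals converts the resulting oblique projection into the orthogonal projection $\pi_y$ — is exactly the natural one for this linear local model, and the algebraic simplification with $A=|z|^2$, $B=|\lambda w|^2$, $a=|e^{i\zeta}|^2$, $b=|e^{i\lambda\zeta}|^2$ does collapse as you claim: the numerator $(aB+bA)(aA+bB)-AB(a-b)^2$ equals $ab(A+B)^2$, giving $\mathcal H(\omega,1)^2 = ab(A+B)/(aA+bB)$, which is the square of the asserted formula. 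The one conceptual point you rightly flag as the crux — that passing from the oblique to the orthogonal projection is valid precisely because the transversals $S_x$, $S_y$ were chosen as Euclidean orthogonal complements to the leaf directions — is handled correctly.
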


\subsection{Chern curvature, Chern class} \label{SS:Chern}

Let $(L , h)$ be a singular Hermitian holomorphic line bundle on $X.$
If $e_L$ is a holomorphic frame of $L$ on some open set $U \subset X,$
then the function $\varphi $ defined  by  $|e_L |^2_h = \exp{( -2\varphi)} $  is called the {\it local weight} of the metric $h$ with respect to $e_L .$ 
If   the local weights $\varphi$ are in $L_\loc^1(U),$  then {\it (Chern) curvature current} of $(L,h)$ denoted by $c_1 (L , h)$ 
is  given by $c_1 (L , h)|_U = \ddc \varphi.$  This is   a $(1,1)$-closed current. Its class in $H^{1,1}(X)$ is  called the Chern class of $L.$
If  we fix a  smooth  Hermitian metric $h_0$ on $L,$ then every  singular metric $h$ on $L$ can be written $h=e^{-2\varphi} h_0$ for some  function $\varphi.$
We say that $\varphi$ is the {\it global weight} of $h$ with respect to $h_0.$ Clearly,  $c_1(L,h)=c_1(L,h_0)+\ddc\varphi.$

\subsection{Transversal metric, transversal form, change of variables}
\label{SS:holonomy_cocycle}
  Let  $\Fc=(X,\Lc,E)$  be a holomorphic  foliation on a Hermitian  complex surface $X.$
 Consider the normal bundle $\Nor(\Fc) = \Tan(X)/\Tan(\Fc)$ of $\Fc.$
 For $x\in X\setminus E$ and a  vector $u_x\in \Tan_x(X),$ let 
 $[u_x]$ denotes its class in $\Nor_x(\Fc).$
 We also identify
 $[u_x]$ with the  set $u_x +\Tan_x(\Fc)\subset \Tan_x(X)$
  Note that   a (local) smooth  section  $w$ of  $\Nor(\Fc)$ can be locally written as  $w_x=[u_x]$ for some
  smooth vector field $u.$
 
 Consider the  following  metric 
$g^\perp_X$ on the normal bundle $\Nor(\Fc):$  
 \begin{equation}\label{e:tran_metric}
  \|w_x\|_{g^\perp_X}:=\min_{u_x\in [w_x]} \|u_x\|_{g_X},\qquad\text{for}\qquad  w_x\in \Nor(\Fc)_x,\ x\in X\setminus E.
 \end{equation}
 Note  that $u_x$ achieving  the minimum in  \eqref{e:tran_metric} is uniquely determined by $[w_x].$
 $g^\perp_X$ is  called   {\it  transversal metric} associated with $\Fc$ and  the ambient metric $g_X.$

 Fix a  smooth Hermitian metric $g_0$ on $\Nor(\Fc).$ There is  a  global weight  function $\varphi$ on $X$  such that
 $  g^\perp_X=e^{-2\varphi} g_0. $
 Let $w$ be  a (smooth) section  of $\Nor(\Fc)$ over a  small open  subset $\U\subset X$   such that
 $ \|w_x\|_{{\hat g}^\perp_X}=1$ for $x\in \U.$ Let $\beta$ be the dual  form of $w$ with respect to $g_x,$ defined by
 $$
 \langle\beta_x, v_x\rangle=g_X(v_x,u_x)\quad\text{for all}\quad  x\in \U\quad\text{and}\quad v_x\in\Tan_x(X) .
 $$
 Here by shrinking $\U$  if necessary, we may assume that $u$ is a vector field on  $\U$ such that $w_x=[u_x]$ for $x\in U.$
 The {\it transversal form} associated with $\Fc$ and  the ambient metric $g_X$ is the smooth  positive $(1,1)$-form  denoted by $\omega^\perp_X$
 defined on $\U$ by
 \begin{equation}\label{e:tran_form}
 \omega^\perp_X:= i\beta\wedge \bar\beta.
 \end{equation}
 Patching this form over all  such open sets $\U,$   
 we obtain   a $(1,1)$-smooth form  $\omega^\perp_X$ well-defined  on $X\setminus E.$

 
 In each regular flow box $\U$ with  foliated chart
    $\Phi:\ \U\to \B\times \Sigma$ with $0\in\B,$ consider  the following  volume form $\Upsilon$ on $\Sigma$
    \begin{equation}\label{e:Upsilon}
\Upsilon(\alpha):= 
(\Phi_*\omega^\perp_X)(0,\alpha)\quad\text{for}\quad \alpha\in\Sigma.  
\end{equation} 
Here  $\B\times \Sigma$ is  an open subset of $\C\times \C.$
    Then there exists   a unique   function $\varphi_\U:\  \U\to \R$  such that
\begin{equation}\label{e:local_varphi}
(id\zeta\wedge d\bar \zeta)\wedge \Phi_*(\omega^\perp_X)(\zeta,\alpha)=\exp{\big(-2\varphi_\U(\Phi^{-1}(\zeta,\alpha))\big)}\cdot
(id\zeta\wedge d\bar \zeta)\wedge \Upsilon(\alpha)\quad\text{for}\quad (\zeta,\alpha)\in\B\times \Sigma.
\end{equation}
In fact, if we  assume  that $\B$ is a simply connected domain in $\C,$ for example, $\B$ is a disc in $\C$  centered at $0,$  then for $x\in\U$ we  write $(\zeta,\alpha)=\Phi(x),$ and  obtain that  
\begin{equation}\label{e:varphi_U}\varphi_\U(x)=-\ln\mathcal H(\omega,1),
\end{equation}  where $\mathcal H$ is the  holonomy cocycle  (see \eqref{e:hol_cocycle_dist}), $\omega: [0,1]\to  L_x$   
 is  a path in a  plaque of $\U$  going from    $y:=\Phi^{-1}(0,\alpha)$  to $x.$   
Note that $\varphi_\U(0,\alpha)=0$ for $\alpha\in\Sigma.$ 

We infer from \eqref{e:tran_form}, \eqref{e:Upsilon} and \eqref{e:local_varphi}  the  following change of variables for integrals with directed forms:
\begin{equation}\label{e:change-variables}
 \langle h,\omega^\perp_X \rangle=\int_\Sigma\Big(\int_{\B} \exp{\big(-2\varphi_\U(\Phi^{-1}(\zeta,\alpha))\big)}(\Phi_*h)(\zeta,\alpha)\Big)  \Upsilon(\alpha)\quad\text{for}\quad h\in\Dc^{1,1}(\Fc).
\end{equation}

In each  singular flow box $\U_a$ with $a\in \E$ we use  the local model $\U_a\simeq \D^2$ and  consider
$\Phi:\ \U_a\to \B\times\Sigma=\S\times \A$ which is the inverse map of $\S\times \A\ni (\zeta,\alpha)\mapsto \tilde\psi_\alpha(\zeta)\in (\D\setminus \{0\})^2,$
where  $\tilde\psi_\alpha$ is  given by
\eqref{e:leaf_equation_bis}.
Consequently, using \eqref{e:Upsilon} and  \eqref{e:varphi_U}, we can find a volume form $\Upsilon$ on $\A$ and  a  function $\varphi_a:\  (\D\setminus \{0\})^2\to\R$  such that
\begin{equation}\label{e:local_varphi_bis}
(id\zeta\wedge d\bar \zeta)\wedge \Phi_*(\omega^\perp_X)(\zeta,\alpha)=\exp{\big(-2\varphi_a(\zeta,\alpha)\big)}\cdot 
(id\zeta\wedge d\bar \zeta)\wedge \Upsilon(\alpha)\quad\text{for}\quad (\zeta,\alpha)\in\S\times \A.
\end{equation}
We also  obtain a similar formula as \eqref{e:change-variables} for  the singular flow box $\U_a.$ 

 
 \subsection{Specializations  $\kappa_x$ and the 
 curvature density $\kappa$}
 

We recall some notions and results from \cite[Section 9.1]{NguyenVietAnh17}.
 Fix a  point $x\in X$   and let
   $\phi_{x}:\ \D\to L=L_{x}$   be the universal covering map    given in (\ref{e:covering_map}).    
    Consider  
      the  function  $\kappa_x:\  \D\to \R$ defined by
\begin{equation}\label{e:specialization}
\kappa_x(\zeta):= \log \| {\mathcal H}(\phi_x\circ\omega,1) \|,\qquad  \zeta \in \D,
\end{equation}
where  $\omega\in \Omega_0$ is any path  such that  $\omega(1)=\zeta.$ 
This   function is  well-defined   because $\mathcal H(\phi_x\circ \omega,t)$ depends only on the homotopy class of the path $\omega|_{[0,t]}$ and $\D$ is   simply connected.
Following \cite{NguyenVietAnh17},
  $\kappa_x$ is  said  to be  the {\it specialization}  of the holonomy cocycle  $\mathcal H$ at $x.$

Next, we recall  from \cite {NguyenVietAnh17} two conversion rules  for changing  specializations in the  same leaf. 
For this  purpose  let $y\in L_x$ and pick $\xi\in \phi_x^{-1}(y).$
Since  the holonomy cocycle is  multiplicative (see \cite[eq. (2.11)]{NguyenVietAnh18b}), the  first  conversion rule (see \cite[identity (9.6)] {NguyenVietAnh17}) states that 
\begin{equation}\label{e:change_spec}
\kappa_y\big({\zeta-\xi\over  1-\zeta\bar\xi}\big)=\kappa_x(\zeta)-\kappa_x(\xi),\qquad \zeta\in\D.
\end{equation}
Consequently, since $\Delta_P$ is invariant  with respect to the automorphisms of $\D,$  it follows that
\begin{equation}\label{e:change_spec_bis}
\Delta_P \kappa_y(0)=\Delta_P\kappa_x(\xi).
\end{equation}
  
By   \cite[identities (9.5) and  (9.8)]{NguyenVietAnh17}, we have that
\begin{equation}\label{e:varphi_n_diffusion}
\kappa_x(0)=0  \quad\textrm{and}\quad  \E_x[\log {\|\mathcal H(\bullet,t)\|    }  ] =  
 (D_t \kappa_x)(0),\qquad t\in\R^+,
\end{equation}
where  $(D_t)_{t\in\R^+}$ is the family of  diffusion  operators associated with  $(\D,g_P).$

Consider the function $\kappa:\ X\setminus E\to\R$  defined by
\begin{equation}\label{e:kappa}
\kappa(x):= (\Delta_P\kappa_x)(0)\quad\text{for}\quad x\in X\setminus E.
\end{equation}

  \begin{remark}\rm  \label{R:well-defined} Let $\U$ be  a (regular or  singular) flow box.
  We  infer from \eqref{e:varphi_U}, \eqref{e:specialization} and \eqref{e:change_spec} that
  $|d\varphi_\U|_P,$ $\Delta_P \varphi_\U$ are globally well-defined on $X\setminus E,$ that is, they do not depend on the choice of $\U.$
  Moreover, we deduce from  \eqref{e:specialization},  \eqref{e:change_spec_bis} and \eqref{e:kappa} that
  \begin{equation}\label{e:curvature-varphi}
   \kappa(x)=-(\Delta_P\varphi_\U)(x)\qquad\text{for}\quad  x\in \U. 
  \end{equation}
  \end{remark}

Now let $L= \Nor(\Fc).$
Suppose that $L$ is  trivial over a flow box $\U\simeq \B\times \Sigma,$ i.e.
$L|_\U\simeq \U\times\C\simeq \B\times\Sigma\times\C.$
Consider the holomorphic section $e_L$ on $\U$ defined by $e_L(x):=(x,1).$
Let $\varphi$ be the local weight of $(L,g^\perp_X)$ with respect to $e_L.$
 Equality \eqref{e:hol_cocycle_dist} is  rewritten as follows
 \begin{equation*}
 {\|e_L(y)\|_{g^\perp_X}\over \|e_L(x)\|_{g^\perp_X}    }  ={e^{-\varphi(y)}\over  e^{-\varphi(x)}} ,
  \end{equation*}
where $x$  and $y$ are  on the same plaque  in the    flow box $\U\simeq \B\times \Sigma.$
Consequently,
\begin{equation}\label{e:hol_cocycle_dist_bis}
\ddcy \log\|\mathcal H(\omega,t)\||_{L_x}=\Big(\ddcy \log\big({\|e_L(y)\|_{g^\perp_X}\over \|e_L(x)\|_{g^\perp_X}    }\big)\Big)|_{L_x}= -\ddcy \varphi(y)|_{L_x}.
\end{equation}
Combining   \eqref{e:specialization},  
   \eqref{e:change_spec}, \eqref{e:change_spec_bis}, \eqref{e:kappa} and \eqref{e:hol_cocycle_dist_bis}, we obtain that
 \begin{equation}\label{e:kappa-varphi}
 \kappa(x)=-(\Delta_P\varphi)(x)\qquad\text{for}\qquad x\in X\setminus E.
 \end{equation}
 That  is why we call the function $\kappa$ the {\it  curvature density} of $\Nor(\Fc).$

\section{Preliminary results}\label{S:preliminaries}

\subsection{Transversal metric and holonomy variations near  singularities}
 
Fix an ambient Hermitian  metric $g_X$ on $X.$  
Fix  a  smooth Hermitian metric $g_0$ on the normal bundle $\Nor(\Fc)$ of $\Fc.$ Let $g^\perp_X$ be the metric on $\Nor(\Fc)$   defined  by \eqref{e:tran_metric}.
So  there is  a global weight function $f:\ X\to [-\infty,\infty)$ such that $g_X^\perp=g_0\exp{(-2f)}.$
We know that the weight function $f$ is   smooth outside $E.$ 

\begin{lemma}\label{L:trans_metric_near_sing}
\begin{enumerate}
\item If $g'_X$ is  another smooth Hermitian  metric on $X$ and  $g'^\perp_X$ is  the transversal metric associated  with $\Fc$ and $g'_X$ (see \eqref{e:tran_metric}), 
then there is  a constant $c>1$ such that
 $c^{-1}g^\perp_X\leq g'^\perp_X\leq cg^\perp_X$ on $X.$ 
 \item There is a constant $c>0$ such that
\begin{equation*}
 \omega^\perp_X\leq c g_X(x)\qquad\text{for}\qquad  x\in X\setminus E,
 \end{equation*}
 where $\omega^\perp_X$ is the transversal form given in \eqref{e:tran_form}.
\item Suppose  that $g_X$ coincides with the Euclidean  metric in a local model near a singular point $a$ of $\Fc.$ Then  for  $x=(z,w)\in \U_a\simeq \D^2,$
\begin{equation}\label{e:f_smoothness}f(x)={1\over 2}\log{(|z|^2+|\lambda w|^2 ) }+\text{a smooth function in}\ x.
\end{equation}
\item There is a constant $c>0$  such that
$$
|df(x)| \leq c (\dist(x,E))^{-1}g_X(x)\quad\text{and}\quad  |\ddc f(x)|\leq c (\dist(x,E))^{-2}g_X(x)\quad\text{for}\quad  x\in X\setminus E.$$
\end{enumerate}
Here, in assertions (2) and (4),  $g_X$ also denotes  the fundamental form associated to the 
Hermitian meric $g_X.$
 
\end{lemma}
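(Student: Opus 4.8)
The plan is to prove the four assertions in order, since each is essentially local and the later ones rely on the normal form established in (3).

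\textbf{Assertion (1).} This is a straightforward compactness argument. Since $g_X$ and $g'_X$ are both smooth Hermitian metrics on the compact manifold $X$, there is a constant $c>1$ with $c^{-1}g_X \le g'_X \le c g_X$ everywhere. Passing to the quotient bundle $\Nor(\Fc) = \Tan(X)/\Tan(\Fc)$, the quotient metrics defined by \eqref{e:tran_metric} inherit the same two-sided bound: for $w_x \in \Nor_x(\Fc)$, picking $u_x \in [w_x]$ realizing the minimum for one metric and estimating by the other immediately gives $c^{-1}\|w_x\|_{g^\perp_X} \le \|w_x\|_{g'^\perp_X} \le c\|w_x\|_{g^\perp_X}$. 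The only mild point is that the minimizing vector for $g^\perp_X$ need not minimize for $g'^\perp_X$, but that only costs us the one-sided estimate in each direction, which suffices.

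\textbf{Assertion (2).} First I would treat the regular part. Away from $E$, the transversal form $\omega^\perp_X = i\beta\wedge\bar\beta$ where $\beta$ is dual to a unit normal vector field $u$ (unit for $g^\perp_X$), and $u_x$ realizes the minimum in \eqref{e:tran_metric}, so $\|u_x\|_{g_X} = \|w_x\|_{g^\perp_X} = 1$. Then $\omega^\perp_X$ evaluated on an orthonormal frame is controlled by $\|u_x\|_{g_X}^2 = 1$, giving $\omega^\perp_X \le c\, g_X$ on any compact subset of $X\setminus E$. To get the bound \emph{up to} the singularities, I would work in each singular flow box $\U_a \simeq \D^2$: there $\Tan(\Fc)$ is spanned by the vector field $Z(z,w) = z\partial_z + \lambda w\partial_w$, and one can compute the orthogonal projection explicitly. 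The key point is that at $x = (z,w)$, the normal direction is $(\bar\lambda \bar w, -\bar z)$ (up to normalization) and the length of the minimizing representative of a normal class, relative to $g_X = $ Euclidean, is bounded; a direct computation of $i\beta\wedge\bar\beta$ in these coordinates shows $\omega^\perp_X$ stays bounded by the Euclidean form as $x\to 0$. Since $X$ is covered by finitely many flow boxes, this gives the global bound.

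\textbf{Assertion (3).} In the local model $\U_a \simeq \D^2$ with Euclidean $g_X$, we have $\Tan(\Fc)$ spanned by $Z = (z, \lambda w)$, so the orthogonal complement line is spanned by the unit vector $n(z,w) = (|z|^2+|\lambda w|^2)^{-1/2}(\overline{\lambda w}, -\bar z)$. The quotient metric on $\Nor(\Fc)$ assigns to the image of a frame $e_L$ the squared length of its projection onto $n$. Choosing the holomorphic frame $e_L$ of $\Nor(\Fc)$ coming from, say, $\partial_w$ mod $\Tan(\Fc)$, one computes $\|e_L(z,w)\|_{g^\perp_X}^2 = |z|^2/(|z|^2 + |\lambda w|^2)$ (the component of $\partial_w$ perpendicular to $Z$). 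Comparing with a fixed smooth metric $g_0$ on $\Nor(\Fc)|_{\U_a}$ — for which $\|e_L\|_{g_0}$ is smooth and nonvanishing — and taking $-\frac12\log$ of the ratio gives $f(z,w) = \frac12\log(|z|^2+|\lambda w|^2) + (\text{smooth})$, modulo an additive smooth term from $\log\|e_L\|_{g_0}$ and from the $\log|z|^2$ piece which one checks is absorbed correctly. I would verify the bookkeeping: the possibly-singular part of $f$ is exactly $\frac12\log(|z|^2+|\lambda w|^2)$ because $\log|z|^2$ cancels against a corresponding term — this is the delicate step, and it matches \eqref{e:f_smoothness} once one is careful about which frame is used and absorbs all smooth pieces.

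\textbf{Assertion (4).} This follows from (3) by direct differentiation. Outside the singular flow boxes, $f$ is smooth on a compact set, so $|df|$ and $|\ddc f|$ are bounded and $\dist(x,E)$ is bounded below, making the claim trivial there. Inside a singular box $\U_a \simeq \D^2$, write $f = \frac12\log(|z|^2+|\lambda w|^2) + g$ with $g$ smooth. The gradient of $\frac12\log(|z|^2+|\lambda w|^2)$ has Euclidean norm $\approx (|z|^2+|\lambda w|^2)^{-1/2} \approx \|x\|^{-1} \approx \dist(x,E)^{-1}$, and its second derivatives are $O(\|x\|^{-2}) = O(\dist(x,E)^{-2})$; the smooth part $g$ contributes only bounded terms, which are dominated by $\dist(x,E)^{-1}$ and $\dist(x,E)^{-2}$ respectively near $E$. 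Summing over the finitely many flow boxes and using \eqref{e:f_smoothness} gives the stated estimates with a uniform constant $c$.

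The main obstacle is \textbf{Assertion (3)}: getting the normal form for $f$ right requires carefully identifying the quotient metric $g^\perp_X$ on $\Nor(\Fc)$ in the explicit local coordinates of the hyperbolic model, choosing a convenient holomorphic frame, and tracking exactly which logarithmic terms are genuinely singular versus which combine into smooth functions. Once \eqref{e:f_smoothness} is in hand, assertions (2) and (4) are routine consequences, and assertion (1) is independent and elementary.
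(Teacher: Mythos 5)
Your proofs of assertions (1)--(3) are correct and essentially coincide with the paper's: (1) is the same two-sided comparison of ambient metrics passed to the quotient norm \eqref{e:tran_metric}; (2) is the same explicit normal-vector computation in the local model (in fact, since the minimizing representative $u_x$ has $\|u_x\|_{g_X}=1$, Cauchy--Schwarz already gives $i\beta\wedge\bar\beta\lesssim g_X$ uniformly up to $E$); and for (3) you compute the weight of the frame $[\partial_w]$ where the paper uses $[\partial_z]$ and passes to curvature forms. The $\log|z|$ cancellation you flag is genuine and is settled by observing that $\Nor(\Fc)|_{\U_a}$ has a holomorphic frame $e$ with $[\partial_z]=\lambda w\,e$ and $[\partial_w]=-z\,e$, so that $\log\|[\partial_w]\|_{g_0}=\log|z|+(\text{smooth})$ and one lands exactly on \eqref{e:f_smoothness} with smooth part $\log\|e\|_{g_0}$.

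The gap is in assertion (4). You deduce it by writing, in each singular box, $f=\tfrac12\log(|z|^2+|\lambda w|^2)+g$ with $g$ smooth and asserting that the smooth part contributes only bounded terms. But \eqref{e:f_smoothness} was proved only under the extra hypothesis that $g_X$ is Euclidean in the local model, whereas assertion (4) is stated for the given ambient metric and is later applied to arbitrary metrics (it is invoked for both $g_X$ and $\hat g_X$ in the alternative proof of Step 2 of Theorem A), so this case cannot be dismissed by a normalization. For a general smooth $g_X$ the quotient-norm computation gives instead $f(x)=\tfrac12\log\|Z(x)\|^2_{g_X}+(\text{smooth})$, where $Z=z\partial_z+\lambda w\partial_w$, and the discrepancy $\tfrac12\log\bigl(\|Z\|^2_{g_X}/(|z|^2+|\lambda w|^2)\bigr)$ is bounded but in general has no limit at the origin (its value along rays depends on the direction of approach whenever $g_X(a)$ is not a multiple of the Euclidean form), so it is not a smooth function with bounded derivatives; its differential and $\ddc$ are only $O(\dist(x,E)^{-1})$ and $O(\dist(x,E)^{-2})$. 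These are precisely the bounds (4) requires, but they must be proved; they do not follow from assertion (1), which compares the weights only in sup-norm and gives no control on $df$ or $\ddc f$. The paper fills this in by a separate comparison argument (as in the proof of Lemma \ref{L:laplacian_hol} (2)); alternatively, differentiating the identity $f=\tfrac12\log\|Z\|^2_{g_X}+(\text{smooth})$ directly, using that $\|Z\|^2_{g_X}$ is smooth, comparable to $\|x\|^2$, with $\bigl|d\,\|Z\|^2_{g_X}\bigr|=O(\|x\|)$, closes the gap.
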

\proof
 
\noindent{\bf Proof of assertion (1).} Since    there is a constant $c'>1$ such that $c'^{-1}g_X\leq g'_X\leq cg_X$ on $X,$ assertion (i)   follows  immediately   from
\eqref{e:tran_metric}.

\noindent{\bf Proof of assertion (2).} Consider  first   the case  where $g_X$ coincides with the Euclidean  metric in a local model near a singular point $a$ of $\Fc.$ 
Since  the vector $(z,\lambda w)$ is  tangent to  the leaf $L_x$ at $x=(z,w),$ the unit vector field 
$$N_x:={1\over\sqrt{|z|^2+|\lambda w|^2} }\big(-\bar\lambda\bar w{\partial\over \partial z}+\bar z{\partial\over \partial w}\big)$$ is  normal to $L_x$ at $x.$
Using this and  \eqref{e:tran_form}, we get assertion  (2)  in this case.

The  general  case is  similar  using assertion (1).

\noindent{\bf Proof of assertion (3).}
Consider  the local holomorphic  section $e_L$ given by  $(z,w)\mapsto {\partial\over\partial z}$ of $\Tan(X)$  over  $\U_a\simeq\D^2.$  
This  section induces  a holomorphic  section $\tilde e_L(x)=e_L(x)/\Tan(\Fc)_x$ of $\Nor(\Fc)$ over $\U_a.$
We  have, for $x=(z,w)\in\D\times (\D\setminus\{0\}),$
$$
\exp(-\varphi(x))=|\tilde e_L(x)|_{g^\perp_X}={1\over\sqrt{|z|^2+|\lambda w|^2} }|\lambda w|.
$$
Hence,   for $x=(z,w)\in\D^2\setminus\{(0,0)\},$
\begin{equation}\label{e:curvature_Nor}c_1(\Nor(\Fc), g^\perp_X)(x)=\ddc\varphi(x)=\ddczw\log{\sqrt{|z|^2+|\lambda w|^2}}.
\end{equation}
 Moreover,  in the local model with coordinates $(z,w)$ associated to the singular point
$E\ni a\simeq (0,0)\in\D^2,$ it follows  from \eqref{e:curvature_Nor} that
\begin{equation*}
c_1(\Nor(\Fc),g_0)+\ddc f= c_1(\Nor(\Fc), g^\perp_X)(x)=\ddc\varphi(x)={1\over 2}\ddczw\log{(|z|^2+|\lambda w|^2)}.
\end{equation*}
Consequently, assertion (3)  follows.

\noindent{\bf Proof of assertion (4).}
We only need to prove  this  assertion in a local model for a singular point $a\in E,$ that is,
to prove the lemma  for  $x\in \U_a\simeq \D^2.$ 
If 
$g_X|_{\U_a}$ coincides with the standard Euclidean metric on $\D^2,$
assertion (4)  is an immediate consequence of assertion (3).
For  the  general case  we argue as in the  proof of Lemma  \ref{L:laplacian_hol} (2) below.
\endproof

 The  following result gives  precise variations up to order $2$ of   $\mathcal H$ near a singular point $a$ 
 using the local model  $(\D^2,\Lc,\{0\})$ introduced in  Subsection   \ref{SS:local_model}. 
\begin{lemma}\label{L:laplacian_hol}  \begin{enumerate}
\item
Let $\D^2$ be  endowed with the Euclidean metric. Then
there is a constant $c>1$  such that
for every  $x=(z,w)\in({1\over2}\D)^2,$  we have that  
\begin{eqnarray*}
c^{-1}    \lof {\|(z,w)\|}&\leq& |d \kappa_x(0)|_P\leq c\lof {\|(z,w)\|},\\
-c  {|z|^2|w|^2\over (|z|^2+|w|^2)^2}  (\lof {\|(z,w)\|})^2&\leq& \Delta_P \kappa_x(0)\leq -c^{-1} {|z|^2|w|^2\over (|z|^2+|w|^2)^2}  (\lof {\|(z,w)\|})^2,
\end{eqnarray*}
where the function $\kappa_x$ is defined in \eqref{e:specialization}.
\item
Let $\D^2$ be  endowed with a smooth Hermitian metric. Then
there is a constant $c>1$  such that
for every  $x=(z,w)\in({1\over2}\D)^2,$  we have that  
\begin{eqnarray*}
c^{-1}    \lof {\|(z,w)\|}&\leq& |d \kappa_x(0)|_P\leq c\lof {\|(z,w)\|},\\
| \Delta_P \kappa_x(0)| &\leq & c {|z|^2|w|^2\over (|z|^2+|w|^2)^2}  (\lof {\|(z,w)\|})^2 +c \sqrt{|z|^2+|w|^2}(\lof {\|(z,w)\|})^2.
\end{eqnarray*}
\end{enumerate}
\end{lemma}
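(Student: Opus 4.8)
The plan is to reduce both estimates, via the local model, to explicit computations at the origin of $\D$, after identifying $\kappa_x$ near $0$ with the function $\Phi_x$ of Lemma~\ref{L:holonomy}. Since the sector $\Pi_x\subset\C$ is simply connected, the universal covering $\phi_x\colon\D\to L_x$ (with $\phi_x(0)=x$) admits a holomorphic lift $\theta_x\colon\Pi_x\to\D$ with $\phi_x\circ\theta_x=\psi_x$ and $\theta_x(0)=0$; one has $d\theta_x(0)\neq 0$ and $\theta_x^*g_P=\psi_x^*g_P$, so near $0$ the map $\theta_x$ is a local isometry $(\Pi_x,\psi_x^*g_P)\to(\D,g_P)$, and because $\mathcal H$ is invariant under homotopy one gets $\kappa_x\circ\theta_x=\log\Phi_x$ on $\Pi_x$. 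Hence $|d\kappa_x(0)|_P=|d(\log\Phi_x)(0)|_{\psi_x^*g_P}$ and $(\Delta_P\kappa_x)(0)=(\Delta_{\psi_x^*g_P}\log\Phi_x)(0)$. By Lemma~\ref{L:Poincare}(2) the metric $\psi_x^*g_P$ is, near $0$, conformal to the Euclidean one with conformal factor $\approx(\lof\|x\|)^{-2}$ at $0$; since in one complex variable both the Laplacian and the length of a differential transform under a conformal change \emph{by the conformal factor alone}, this gives $|d\kappa_x(0)|_P\approx(\lof\|x\|)\,|d(\log\Phi_x)(0)|$ and $(\Delta_P\kappa_x)(0)\approx(\lof\|x\|)^2\,(\Delta_{\mathrm{euc}}\log\Phi_x)(0)$, the last $\approx$ being two-sided once the sign of $(\Delta_{\mathrm{euc}}\log\Phi_x)(0)$ is known.

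\textbf{Proof of (1).} It then suffices to differentiate the formula of Lemma~\ref{L:holonomy}. With $v=\Im\zeta$, $v'=\Im(\lambda\zeta)$, $A=|z|^2$, $B=|\lambda w|^2$, one has $\log\Phi_x(\zeta)=-v-v'-\tfrac12\log\!\big((Ae^{-2v}+Be^{-2v'})/(A+B)\big)$; the linear term is harmonic, so an elementary computation at $\zeta=0$ (whose only nonobvious step is the identity $(A+|\lambda|^2B)(A+B)-|A+\lambda B|^2=|1-\lambda|^2AB$) yields $\partial_\zeta(\log\Phi_x)(0)=\dfrac{i(B+\lambda A)}{2(A+B)}$ and $\partial_\zeta\partial_{\bar\zeta}(\log\Phi_x)(0)=-\dfrac12\dfrac{|1-\lambda|^2AB}{(A+B)^2}$. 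Since $\lambda\notin\R$ we have $1-\lambda\neq0$ and $\lambda\neq0$, so $(\Delta_{\mathrm{euc}}\log\Phi_x)(0)\le 0$ with $|(\Delta_{\mathrm{euc}}\log\Phi_x)(0)|\approx|z|^2|w|^2/(|z|^2+|w|^2)^2$; moreover $s+\lambda t=0$ with $s,t\ge 0$ would force $\lambda\le 0$, so $|s+\lambda t|^2$ is bounded below on the compact simplex $\{s,t\ge0,\ s+t=1\}$, whence $|B+\lambda A|\approx A+B$ and $|d(\log\Phi_x)(0)|\approx1$. This gives both displayed estimates near the singularity; away from it $\kappa_x$ and its derivatives are smooth and bounded, so the bounds hold on all of $(\tfrac12\D)^2$.

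\textbf{Proof of (2).} Write $\kappa_x^{\mathrm{euc}}$ for the specialization attached to the Euclidean metric. The two transversal metrics on $\Nor(\Fc)$ have local weights (relative to the frame $[\partial/\partial z]$) differing by $\psi:=\tfrac12\log\big(\|Z\|^2_{g_X}/\|Z\|^2_{\mathrm{euc}}\big)-\tfrac12\log\det g_X$, where $Z$ is the linear field \eqref{e:Z}; hence on $L_x$ one has $\kappa_x-\kappa_x^{\mathrm{euc}}=\pm(\psi|_{L_x}-\psi|_{L_x}(x))$ (the harmonic ambiguity between cocycle and weight is intrinsic to the foliated chart, hence metric-independent). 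Now $Z\circ\psi_x=-i\,\psi_x'$, so $\|Z\circ\psi_x\|^2_g$ is exactly the conformal factor of $\psi_x^*g$; the first two summands of $\psi|_{L_x}\circ\psi_x$ therefore combine into half the difference of the logarithms of the conformal factors of $\psi_x^*g_X$ and $\psi_x^*g_{\mathrm{euc}}$. Applying $\ddc$ and the curvature identity relating $\ddc\log(\text{conformal factor})$ to the Gaussian curvature (up to a universal constant), one finds
$$(\Delta_P\kappa_x)(0)-(\Delta_P\kappa_x^{\mathrm{euc}})(0)=-K_{g_X}(x)\,\eta_{g_X}(x)^2+K_{\mathrm{euc}}(x)\,\eta_{\mathrm{euc}}(x)^2+O\!\big(\eta_{g_X}(x)^2\big),$$
$K_g$ being the Gauss curvature of $L_x$ in $g$, $\eta_g$ the extremal function \eqref{e:extremum}, and the error coming from $\log\det g_X$. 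By Lemma~\ref{L:Poincare}(1), $\eta_g(x)^2\approx\|x\|^2(\lof\|x\|)^2\lesssim\sqrt{|z|^2+|w|^2}\,(\lof\|x\|)^2$; and the computation of part (1), with the Euclidean metric replaced by $g_X$ (the second fundamental form of $L_x$ behaving like the Euclidean one), shows $|K_g(x)|\,\eta_g(x)^2\lesssim\frac{|z|^2|w|^2}{(|z|^2+|w|^2)^2}(\lof\|x\|)^2+\sqrt{|z|^2+|w|^2}\,(\lof\|x\|)^2$ for $g\in\{g_X,g_{\mathrm{euc}}\}$. Adding the estimate of (1) for $(\Delta_P\kappa_x^{\mathrm{euc}})(0)$ gives the Laplacian bound of (2). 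The gradient bound follows likewise: $d\kappa_x(0)-d\kappa_x^{\mathrm{euc}}(0)=\mp d(\psi|_{L_x})(x)$, and by the same mechanism $\partial_\zeta(\log\Phi^{g_X}_x)(0)$ has the shape found in (1) with the Euclidean Hermitian product replaced by $g_X(x)$, up to an $O(\|x\|)$ error, and is again $\approx1$.

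\textbf{Expected main obstacle.} The delicate step is the Laplacian estimate in (2): a naive bound of $\psi$ only gives $|(\Delta_P\kappa_x)(0)|\lesssim(\lof\|x\|)^2$, which is too weak near a separatrix, where $|z||w|/(|z|^2+|w|^2)$ is small. The gain comes from two facts that I would need to establish carefully: that the \emph{intrinsic} (constant) Poincar\'e–curvature contribution cancels in the difference $-K_{g_X}\eta_{g_X}^2+K_{\mathrm{euc}}\eta_{\mathrm{euc}}^2$, leaving only the second–fundamental–form part of the leafwise curvature; and that this part — by a comparison, uniform in the metric, of Fubini–Study forms on $\P^1$ along the rational curve $\zeta\mapsto[\,ze^{i\zeta}:\lambda we^{i\lambda\zeta}\,]$ (the Gauss map of the leaf) — is comparable to $\frac{|z|^2|w|^2}{(|z|^2+|w|^2)^2}(\lof\|x\|)^2$. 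Controlling these, together with the error terms arising from the non–conformal part of $g_X$, is the technical heart of the argument.
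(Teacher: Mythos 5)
Your treatment of assertion (1) matches the paper's: you identify $\kappa_x$ with $\log\Phi_x$ via the lift $\theta_x$, compute $\partial_\zeta\log\Phi_x(0)=\tfrac{i(B+\lambda A)}{2(A+B)}$ and $\partial_\zeta\partial_{\bar\zeta}\log\Phi_x(0)=-\tfrac12\tfrac{|1-\lambda|^2AB}{(A+B)^2}$, and transfer by the conformal factor from Lemma~\ref{L:Poincare}. In fact you go slightly further than the paper here: the paper only records the upper bound $|d\log\Phi_x(0)|=O(1)$, whereas your observation that $|B+\lambda A|\approx A+B$ because $\lambda\notin\R$ gives the two-sided $\approx1$ that the stated lower bound $c^{-1}\lof\|(z,w)\|\le|d\kappa_x(0)|_P$ actually requires; this is a genuine improvement in completeness.

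For assertion (2), however, you take a genuinely different route and it is not complete. The paper reduces a general smooth Hermitian metric $g_X$ to a \emph{constant} Hermitian metric $g'_X$ in the local model via the smooth weight $\varphi$ with $g_X^\perp=e^{-2\varphi}g'^\perp_X$, then establishes the elementary but nontrivial chain $\varphi(x)=O(\|x\|)$, $\ddc\varphi(x)=O(\|x\|^{-1})$, hence $(\Delta_P\varphi)(x)=O(\|x\|(\lof\|x\|)^2)$, and finally does for the constant metric $g'_X(0)(Z,W)=|a_1Z+b_1W|^2+|a_2Z+b_2W|^2$ a computation parallel to part (1). Your route instead invokes the splitting $c_1(\Lambda^2\Tan X)|_{L_x}=c_1(\Tan L_x,g^\parallel)+c_1(\Nor\Fc,g^\perp)$ to express $\Delta_P\kappa_x(0)-\Delta_P\kappa_x^{\mathrm{euc}}(0)$ through leafwise Gauss curvatures $K_{g_X}$, $K_{\mathrm{euc}}$ plus a $\log\det g_X$ remainder. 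The crucial estimate
\[
|K_{g_X}(x)|\,\eta_{g_X}(x)^2\lesssim \tfrac{|z|^2|w|^2}{(|z|^2+|w|^2)^2}(\lof\|x\|)^2+\sqrt{|z|^2+|w|^2}\,(\lof\|x\|)^2
\]
for a \emph{general} smooth $g_X$ is not proved; you justify it only by an appeal to ``the second fundamental form of $L_x$ behaving like the Euclidean one'' and then explicitly flag it in your final paragraph as something you ``would need to establish carefully.'' Because the leaves $L_x$ degenerate near the singularity, the comparison of $II_{g_X}$ with $II_{\mathrm{euc}}$ (Christoffel corrections, change of normal projection) needs a quantitative argument of exactly the kind the paper's constant-metric reduction supplies in a cleaner way. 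Also, the remark that ``the intrinsic (constant) Poincar\'e-curvature contribution cancels'' is not right as stated: the Gauss curvatures $K_{g_X},K_{\mathrm{euc}}$ of the induced ambient metrics on $L_x$ have nothing to do with the leafwise Poincar\'e curvature; for the Euclidean metric $K_{\mathrm{euc}}=-|II_{\mathrm{euc}}|^2$ is already entirely a second-fundamental-form term, so there is no intrinsic piece to cancel. In short: part (1) is fine; part (2), as written, has a gap at the Gauss-curvature bound for general $g_X$, which the paper avoids via the constant-metric plus $O(\|x\|)$-perturbation scheme.
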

\proof  \noindent {\bf Proof of assertion (1).}
Let $\Phi_x$ be the function defined in Lemma \ref{L:holonomy}. For $\zeta\in \C$  with $|\zeta|\ll 1$  let $\xi(\zeta)\in\Pi_x$ be such that
$$\psi_x(\xi(\zeta))=\phi_x(\zeta).$$
Differentiating both sides    and using  \eqref{e:extremum} yield that
$$|\xi'(0)|=    {|d\phi_x(0)|\over |d\psi_x(0)|}       = {\eta(x)\over |d\psi_x(0)|}.$$
This, Lemma  \ref{L:holonomy} and \eqref{e:specialization} together  imply that
\begin{equation}\label{e:change_eta}
 |d \kappa_x(0)|=  \big|{d\log \|\mathcal H(\phi_x\circ \omega,1)\|\over d\zeta}|_{\zeta=0}\big|=
 \big|{d\log \Phi_x(\xi(\zeta))\over d\zeta}|_{\zeta=0}\big|
 =
 {\eta(x) |d\log{\Phi_x}(0)|\over |(d\psi_x)(0)|} .
\end{equation}
Using  \eqref{e:relation_Poincare_Hermitian_metrics}, a similar argument  shows that
\begin{equation}\label{e:change_eta_bis}
  (\Delta_P \kappa_x)(0)={\eta^2(x) |\ddc\log{\Phi_x}(0)|\over |(d\psi_x)(0)|^2}.
\end{equation}
We infer from \eqref{e:leaf_equation} that
\begin{equation}\label{e:dpsi_x}
|(d\psi_x)(0)|\approx \|(z,w)\|=\dist(x,E).
\end{equation}
 Next, observe  that 
 $
\dbar\log{|e^{i\zeta}|} + \dbar\log{|e^{i\lambda \zeta}|}=O( 1).$
 A  straightforward computation  shows that
\begin{equation}\label{e:dbar_log}
\dbar\log{ \big( |ze^{i\zeta}|^2+|\lambda we^{i\lambda \zeta}|^2     \big)}={\dbar |ze^{i\zeta}|^2+\dbar |\lambda we^{i\lambda \zeta}|^2   \over\big( |ze^{i\zeta}|^2+|\lambda we^{i\lambda \zeta}|^2
\big) }
= {-i\big(|ze^{i\zeta}|^2+ \bar\lambda|\lambda we^{i\lambda \zeta}|^2     \big)d\bar\zeta\over\big( |ze^{i\zeta}|^2+|\lambda we^{i\lambda \zeta}|^2     \big) }. 
\end{equation}
 Therefore, it follows that
 $$
 |d\log\Phi_x(0)|=\big|d \log{|e^{i\zeta}|} + d\log{|e^{i\lambda \zeta}|}- {1\over 2}d\log{ \big( |ze^{i\zeta}|^2+|\lambda we^{i\lambda \zeta}|^2     \big)}\big|=O(1).
 $$
Inserting this and  \eqref{e:dpsi_x} into  the  first equality in \eqref{e:change_eta} and applying  Lemma \ref{L:Poincare}, the first  inequality of assertion (1) follows.

 To  prove the second  inequality of assertion (1),
 note that
 $
i\ddbar\log{|e^{i\zeta}|} = i\ddbar\log{|e^{i\lambda \zeta}|}=0.$
Therefore,  we infer from Lemma \ref{L:holonomy}  that
$$
i\partial\dbar \log{\Phi_x(\zeta)}= -{1\over 2} i\ddbar\log{ \big( |ze^{i\zeta}|^2+|\lambda we^{i\lambda \zeta}|^2     \big)}.
$$
To  compute the right hand side of the last line, we use \eqref{e:dbar_log}
\begin{multline*}
\ddbar\log{ \big( |ze^{i\zeta}|^2+|\lambda we^{i\lambda \zeta}|^2     \big)}
= {-i\partial \big(|ze^{i\zeta}|^2+ \bar\lambda|\lambda we^{i\lambda \zeta}|^2     \big)\wedge d\bar\zeta\over\big( |ze^{i\zeta}|^2+|\lambda we^{i\lambda \zeta}|^2     \big) }\\
-{i\big(|ze^{i\zeta}|^2+ \bar\lambda|\lambda we^{i\lambda \zeta}|^2     \big)d\bar\zeta
\wedge \partial \big( |ze^{i\zeta}|^2+|\lambda we^{i\lambda \zeta}|^2     \big)
\over\big( |ze^{i\zeta}|^2+|\lambda we^{i\lambda \zeta}|^2     \big)^2 }\\
= {\big( |ze^{i\zeta}|^2+|\lambda|^2|\lambda we^{i\lambda \zeta}|^2     \big)\big( |ze^{i\zeta}|^2+|\lambda we^{i\lambda \zeta}|^2     \big)-
\big| |ze^{i\zeta}|^2+\lambda|\lambda we^{i\lambda \zeta}|^2     \big|^2  \over\big( |ze^{i\zeta}|^2+|\lambda we^{i\lambda \zeta}|^2     \big)^2 }d\zeta\wedge d\bar\zeta. 
\end{multline*}
We have shown that
$$
i\ddbar \log{\Phi_x(\zeta)}={-|\lambda-1|^2\over 2}{   |ze^{i\zeta}|^2|\lambda we^{i\lambda \zeta}|^2          \over \big( |ze^{i\zeta}|^2+|\lambda we^{i\lambda \zeta}|^2     \big)^2}id\zeta\wedge d\bar\zeta.
$$

Inserting this and  \eqref{e:dpsi_x} into  the  second equality in \eqref{e:change_eta} and applying  Lemma \ref{L:Poincare}, the second  inequality of assertion (1) follows.

\noindent {\bf Proof of assertion (2).} 
The first  inequality  can be proved  similarly  as in assertion (1). We  only prove   the second inequality.

First  consider the case where   $\D^2$ is  endowed with a  constant  Hermitian  metric $g_X,$ that is, $g_X(x)=g_X(0)$ for $x\in\D^2.$
This  case  is  analogous  to the  case of Euclidean metric  in assertion (1), although  the computation is  slightly more  involved.
We only give  a  sketchy proof here.
There are  $a_1,a_2,b_1,b_2\in\C$ such that  $a_1b_2\not=b_1a_2$ and that 
$$
g_X(0)(Z,W)= |a_1Z+b_1W|^2+|a_2Z+b_2W|^2\qquad\text{for}\qquad (Z,W)\in\C^2.
$$
For $x=(z,w)\in \D^2\setminus \{(0,0)\},$ $\kappa(x)$ is  determined by   
\begin{equation*}
 -\ddc\log{(  |a_1Z+b_1W|^2+|a_2Z+b_2W|^2 )}(0)|_{\widehat L_x}= \kappa(x)g_P(x),
\end{equation*}
where $(Z(\zeta),W(\zeta))=(ze^{i\zeta},we^{i\lambda\zeta}),$ $\zeta\in\C$ (see \eqref{e:leaf_equation}).
Write
$$
\pi\ddc\log{(  |a_1Z+b_1W|^2+|a_2Z+b_2W|^2 )}= { i\gamma\wedge\bar\gamma\over(  |a_1Z+b_1W|^2+|a_2Z+b_2W|^2 )^2},
$$
where $\gamma:=(  a_1Z+b_1W)(a_2dZ+b_2dW )-(  a_2Z+b_2W)(a_1dZ+b_1dW ).$
Using this and applying Lemma \ref{L:Poincare},
a straightforward  computation shows that $\kappa(x)$ satisfies the second inequality of assertion (2).

 Consider the  general case. 
Let $g'_X$ be  a Hermitian metric on $X$  such that  in a local model $\U_a\simeq \D^2$ near every singular point $a\in E$ it coincides with a constant  Hermitian metric  on $\D^2.$
 We see that there is  a  smooth  function  $\varphi$ on $\D^2\setminus \{(0,0)\}$ such that  $g_X^\perp=e^{-2\varphi}g'^\perp_X$  on $\D^2\setminus \{(0,0)\}.$
 Let $\kappa'_x$ be the function given in \eqref{e:specialization} using  $g'_X$ instead of $g_X.$
 By   \eqref{e:local_varphi}, \eqref{e:varphi_U} and  \eqref{e:specialization}, one has
 \begin{equation*}
  \kappa_x(\zeta)=\kappa'_x(\zeta)+(\varphi\circ\phi_x)(\zeta)\qquad\text{for}\qquad\zeta\in\D.
 \end{equation*}
 So we get 
  \begin{equation}\label{e:change_kappa}
  \Delta_P\kappa_x(0)=\Delta_P\kappa'_x(0)+(\Delta_P\varphi)(x)\qquad\text{for}\qquad x\in X\setminus E.
  \end{equation}
 Observe that $g_X(x)=g_X(0)+O(\|x\|)$ for $x\in({1\over 2}\D)^2.$
 Consequently,   $\varphi(x)=(O(\|x\|).$
 Hence, $\ddc\varphi=O(\|x\|^{-1}).$ Therefore, by  Lemma \ref{L:Poincare},
 $(\Delta_P\varphi)(x)=O(\|x\|(\lof{\|x\|})^2).$  Note  that  $\Delta_P\kappa'_x(0)$ satisfies the  second   estimate of assertion (1).
 Inserting this  in    \eqref{e:change_kappa}  and invoking the above estimate for $\Delta_P\kappa'_x(0),$   the second inequality of assertion (2)
   follows.

\endproof

\subsection{Lyapunov exponent and  weight function $W$}
 
 We  keep the hypotheses and notation of Theorem \ref{T:VA}.
 For $t\in\R^+,$ consider  the function $F_t:\  X\setminus E \to\R$ defined by
 \begin{equation}\label{e:F}
  F_t(x):= \int_{\Omega} \log \|\mathcal{H}(\omega,t)\| dW_x(\omega)  \qquad\text{for}\qquad x\in X\setminus E.
 \end{equation}
By  \eqref{e:Lyapunov_exp}
 the  Lyapunov exponent $\chi(T)$ can be rewritten as
\begin{equation}\label{e:Lyapunov_exp_bis}
\chi(T):= \int_X F_1(x)d\mu(x)={1\over t}\int_X F_t(x)d\mu(x),
\end{equation}
 where the  measure $\mu$ is  given in \eqref{e:mu}.
 The following result is  needed.
 \begin{theorem}\label{T:VA_bis} 
 There is a constant $c>0$ such that
  \begin{equation}\label{e:growth_F}
   |F_1(x)|\leq  c  \lof \dist(x,E)\qquad \text{for}\qquad  x\in X\setminus E.
 \end{equation}
 Moreover, the following inequality holds:
 \begin{equation}\label{e:integrability}
 \int_X \lof \dist(x,E)d\mu(x)<\infty.
\end{equation}
 \end{theorem}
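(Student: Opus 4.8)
\emph{Strategy.} I would prove the two inequalities separately. Inequality \eqref{e:growth_F} follows from a direct estimate, uniform in $x$, based on the representation of $F_1$ as a heat diffusion of the specialization $\kappa_x$. Inequality \eqref{e:integrability} is reduced, by localizing near each $a\in E$ and summing by parts, to a clustering estimate for the mass of $\mu$ near the hyperbolic singularities.

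\emph{Proof of \eqref{e:growth_F}.} By \eqref{e:varphi_n_diffusion}, $F_1(x)=(D_1\kappa_x)(0)$, so
\[
|F_1(x)|\ \le\ \int_{\D}p(0,\zeta,1)\,|\kappa_x(\zeta)|\,g_P(\zeta).
\]
Since $\kappa_x(0)=0$, integrating $d\kappa_x$ along the Poincar\'e geodesic $\gamma$ from $0$ to $\zeta$ gives $|\kappa_x(\zeta)|\le\dist_P(0,\zeta)\sup_{\zeta'\in\gamma}|d\kappa_x(\zeta')|_P$. Differentiating the conversion rule \eqref{e:change_spec} once yields $|d\kappa_x(\zeta')|_P=|d\kappa_{\phi_x(\zeta')}(0)|_P$ (the first-order analogue of \eqref{e:change_spec_bis}), and by the first inequality in Lemma \ref{L:laplacian_hol}(1) (inside the singular flow boxes) together with Lemma \ref{L:Poincare}(1) (outside them) this is $\lesssim\lof\dist(\phi_x(\zeta'),E)$. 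Moreover, since $\dist(\cdot,E)$ is $1$-Lipschitz and $\eta\approx\dist(\cdot,E)\lof\dist(\cdot,E)$ near $E$ by Lemma \ref{L:Poincare}(1), one has $|d\lof\dist(\cdot,E)|_P\lesssim\lof\dist(\cdot,E)$ on all of $X\setminus E$, whence by Gronwall's inequality $\lof\dist(y,E)\le e^{c\dist_P(x,y)}\lof\dist(x,E)$ for every $y\in L_x$. Combining these bounds with $\dist_P(x,\phi_x(\zeta'))\le\dist_P(0,\zeta)$ for $\zeta'\in\gamma$ gives
\[
|\kappa_x(\zeta)|\ \lesssim\ \dist_P(0,\zeta)\,e^{c\dist_P(0,\zeta)}\,\lof\dist(x,E),
\]
and since the heat kernel of the Poincar\'e disc at time $1$ decays like a Gaussian in $\dist_P(0,\zeta)$, the integral $\int_{\D}p(0,\zeta,1)\,\dist_P(0,\zeta)\,e^{c\dist_P(0,\zeta)}\,g_P(\zeta)$ is a finite constant; \eqref{e:growth_F} follows.

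\emph{Proof of \eqref{e:integrability}.} Away from the singular flow boxes $\lof\dist(\cdot,E)$ is bounded and $\mu$ is finite, so it is enough to bound $\int_{\U_a}\lof\dist(x,a)\,d\mu(x)$ for each $a\in E$. With $B_m:=B(a,2^{-m})\subset\U_a$ one has $\lof\dist(x,a)\approx m$ on $B_m\setminus B_{m+1}$, and Abel summation gives
\[
\int_{\U_a}\lof\dist(x,a)\,d\mu(x)\ \lesssim\ 1+\sum_{m\ge1}m\,\mu(B_m\setminus B_{m+1})\ \le\ 1+\sum_{m\ge1}\mu(B_m),
\]
so it suffices to establish a decay $\mu(B(a,r))=O\big((\log\tfrac1r)^{-1-\varepsilon}\big)$ as $r\to0$, for some $\varepsilon>0$. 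Such a clustering estimate for the harmonic measure of a directed positive $\ddc$-closed current near a hyperbolic singularity is obtained from the description of $T$ in the local model $(\D^2,\Lc,\{0\})$ — the decomposition \eqref{e:decomposition} along the leaves $\mathcal L_\alpha$, together with the positivity and $\ddc$-closedness of $T$ across the separatrices and the comparison estimates of Lemmas \ref{L:Poincare} and \ref{L:holonomy}; a version sufficient here is already contained in \cite{NguyenVietAnh18b}, where it appears among the steps towards the existence of $\chi(T)$.

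\emph{Main obstacle.} The whole difficulty lies in \eqref{e:integrability}: finiteness of the Poincar\'e mass $\|\mu\|$ by itself is \emph{not} enough. Indeed, writing $d\mu=\eta^{-2}\,d(T\wedge g_X)$ and using $\eta\approx\dist(\cdot,a)\lof\dist(\cdot,a)$ near $a$, finiteness of $\|\mu\|$ only gives $\int\dist(\cdot,a)^{-2}(\lof\dist(\cdot,a))^{-2}\,d(T\wedge g_X)<\infty$, whereas \eqref{e:integrability} demands finiteness of the strictly larger quantity $\int\dist(\cdot,a)^{-2}(\lof\dist(\cdot,a))^{-1}\,d(T\wedge g_X)$; one therefore genuinely needs a quantitative rate for the loss of mass of $T$ near a hyperbolic singularity — precisely the sort of estimate that the refined heat-diffusion analysis of Section \ref{S:Lyapunov} (culminating in Proposition \ref{P:Lyapunov_vs_kappa}) will later sharpen. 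A minor technical point in the proof of \eqref{e:growth_F} is that $\phi_x(\zeta)$ may leave $\U_a$ as $\zeta$ ranges over $\D$; this causes no trouble since $|d\kappa_x|_P$ is globally well-defined on $X\setminus E$ by Remark \ref{R:well-defined}.
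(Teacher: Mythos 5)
Your proposal is correct but follows a genuinely different route from the paper. The paper's own proof of Theorem \ref{T:VA_bis} is just two citations: \eqref{e:growth_F} is attributed to Proposition~3.3 and Lemma~4.1 of \cite{NguyenVietAnh18b}, and \eqref{e:integrability} is exactly Theorem~1.4 of \cite{NguyenVietAnh18b}; no argument is given in this article. You instead give an essentially self-contained derivation of \eqref{e:growth_F}: the identity $F_1(x)=(D_1\kappa_x)(0)$ from \eqref{e:varphi_n_diffusion}, the first-order analogue of \eqref{e:change_spec_bis} which makes $|d\kappa_x|_P$ a globally defined function on $X\setminus E$, the pointwise bound $|d\kappa_\cdot(0)|_P\lesssim\lof\dist(\cdot,E)$ from Lemma \ref{L:laplacian_hol}(1) and the regular-flow-box discussion, and then a Gronwall estimate along Poincar\'e geodesics propagating $\lof\dist(\cdot,E)$ at an at-most-exponential rate. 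This Gronwall step is a clean streamlining of what Proposition~\ref{P:expansion_rate} accomplishes (later in the paper) by the more laborious subdivision of the geodesic into pieces sitting in single flow boxes; your argument effectively recovers the first inequality of Proposition~\ref{P:expansion_rate}, and the Gaussian decay of the hyperbolic heat kernel at time $1$ then controls $(D_1\kappa_x)(0)$. There is no circularity — Lemmas~\ref{L:Poincare} and~\ref{L:laplacian_hol} precede Theorem~\ref{T:VA_bis} — so this is a legitimate alternative to the external citation, at the cost of some informality (e.g.\ $\dist(\cdot,E)$ is only Lipschitz, so the Gronwall inequality must be read almost everywhere; and the bound on $|d\kappa_x(0)|_P$ in regular flow boxes uses the smoothness and boundedness of the holonomy cocycle there, which you invoke a bit cryptically as ``Lemma~\ref{L:Poincare}(1) outside them'').

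For \eqref{e:integrability}, both you and the paper ultimately defer to \cite{NguyenVietAnh18b}. Your intermediate reduction via Abel summation to a clustering rate $\mu(B(a,r))=O\big((\log\tfrac1r)^{-1-\varepsilon}\big)$ is a sufficient (not equivalent) reformulation; be aware that what the paper cites, Theorem~1.4 of \cite{NguyenVietAnh18b}, is stated as the integrability itself, and a polynomial-in-$m$ decay of $\mu(B(a,2^{-m}))$ is strictly stronger than $\sum_m\mu(B(a,2^{-m}))<\infty$. If the cited Theorem~1.4 proves only the integrability and not the stronger polynomial rate, your reduction should instead say that $\sum_m\mu(B(a,2^{-m}))<\infty$ is \emph{equivalent} (up to a bounded term) to the desired integrability, and then quote Theorem~1.4 directly rather than a hypothetical clustering lemma. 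Your ``main obstacle'' observation — that finiteness of the Poincar\'e mass alone loses exactly one power of $\lof\dist(\cdot,E)$ and is therefore insufficient — is accurate and pinpoints why \eqref{e:integrability} genuinely requires the refined analysis of \cite{NguyenVietAnh18b}.
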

\proof
\eqref{e:growth_F} follows from  Proposition 3.3 and Lemma 4.1 in \cite{NguyenVietAnh18b}.

Theorem 1.4 in  \cite{NguyenVietAnh18b} gives \eqref{e:integrability}. 
\endproof

 Define a weight function $W:\ X\setminus E\to \R^+$ as follows. Let $x\in X\setminus E.$
 If $x$ belongs to  a regular flow box then $W(x):=1.$
 Otherwise, if  $x=(z,w)$ belongs to a singular flow box $\U_a\simeq (\D^2,\Lc,\{0\}),$ $a\in E,$  which is  identified with the local model with coordinates $(z,w),$ then
 \begin{equation}\label{e:W}
  W(x):=   \lof{\|(z,w)\|}+{|z|^2|w|^2\over (|z|^2+|w|^2)^2}  (\lof {\|(z,w)\|})^2.
 \end{equation}
Note that $$1\leq \lof \dist(x,E) \leq W(x)\leq 2 (\lof \dist(x,E))^2.$$

 \section{Cohomological formula for the Lyapunov exponent}\label{S:Lyapunov}
  
   In this  section we first  prove Proposition   \ref{P:Lyapunov_vs_kappa} which plays a  crucial role in this  article. Next,  we  prove the first identity of Theorem A.
Inspired by  Definition 8.3  in Candel \cite{Candel03},   the  following notion is  needed.
\begin{definition}\label{D:moderate_function} \rm A real-valued function $h$ defined on   $\D$
 is called    {\it weakly moderate} 
  if there is a   constant $c>0$ such that
$$
\log |h(\xi)-h(0)|\leq  c\,\dist_P(\xi,0)+c,\qquad  \xi\in \D.
$$  
  \end{definition}
  \begin{remark}
   \rm
   The notion of  weak moderateness is  weaker than the notion of moderateness
   given   in \cite{Candel03,NguyenVietAnh18d}.
  \end{remark}

 The usefulness of weakly moderate functions is illustrated by 
 the following Dynkin type formula.
\begin{lemma}\label{L:D_t_Delta} 
Let $f\in\Cc^2(\D)$  be such that  $f,$ $|df|_P$ (see \eqref{e:df_P}) and $\Delta_P f$  (see \eqref{e:Delta_P})  are weakly moderate functions.   Then 
$$
(D_tf)(0) - f(0)=\int_0^t (D_s \Delta_P  f) (0) ds,\qquad t\in\R^+.
$$ 
\end{lemma}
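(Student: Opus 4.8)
The plan is to establish this Dynkin-type formula by a standard stochastic-calculus / semigroup argument, with extra care devoted to the fact that $f$, $|df|_P$ and $\Delta_P f$ are only assumed \emph{weakly moderate} rather than bounded. First I would recall that on the Poincar\'e disc $(\D,g_P)$ the heat kernel $p(0,y,t)$ decays fast enough in $\dist_P(0,y)$ (Gaussian-type upper bounds on a complete manifold of bounded geometry, as in \cite{CandelConlon2,Chavel}) that the integral $\int_\D p(0,y,t)\psi(y)\,g_P(y)$ converges absolutely whenever $|\psi(y)|\le e^{c\,\dist_P(y,0)+c}$; hence $D_sf(0)$, $D_s(\Delta_P f)(0)$ and the iterated quantities appearing below are all well-defined and finite, and $s\mapsto D_s(\Delta_P f)(0)$ is continuous on $\R^+$, so the right-hand side makes sense. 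The weak-moderateness hypothesis is exactly what makes these a priori integrals legitimate; this is the point where the ``$|df|_P$ and $f$ moderate'' assumptions (not just $\Delta_P f$) get used, since one needs the boundary terms in the forthcoming integration by parts / Green's identity to vanish.

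Next I would run the classical argument: set $u(s):=(D_s f)(0)$. Formally $u'(s)=D_s(\Delta_P f)(0)$ by the heat equation, and integrating from $0$ to $t$ together with $D_0=\id$ (see \eqref{e:semi_group}) gives the claim. To make this rigorous without boundedness, I would exhaust $\D$ by the hyperbolic balls $B_r$ of $\dist_P$-radius $r$ centered at $0$, let $p_r$ be the Dirichlet heat kernel of $B_r$, and $D^r_s$ the corresponding killed semigroup. On the relatively compact smooth domain $B_r$ the functions $f,\Delta_P f\in\Cc^2(\overline{B_r'})$ for $r'>r$, so Green's formula and the Duhamel identity give exactly
\[
(D^r_tf)(0)-f(0)=\int_0^t (D^r_s\Delta_P f)(0)\,ds
\]
with no boundary contribution at time $0$ (only an interior expansion is needed). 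Then I would let $r\to\infty$. Monotone/dominated convergence applies: $p_r(0,y,s)\uparrow p(0,y,s)$, and the weak-moderateness bounds on $f$ and $\Delta_P f$ supply an integrable (against $p(0,y,s)\,g_P(y)$, uniformly for $s$ in a compact interval) dominating function, using the Gaussian heat-kernel estimate to beat the exponential growth $e^{c\,\dist_P}$. This yields $D^r_tf(0)\to D_tf(0)$ and $\int_0^t D^r_s\Delta_P f(0)\,ds\to\int_0^t D_s\Delta_P f(0)\,ds$, proving the lemma.

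The main obstacle is precisely this interchange of limits: one must control $\int_\D p(0,y,s)\,e^{c\,\dist_P(y,0)}\,g_P(y)$ uniformly for $s\in[0,t]$, and similarly the time-integrated version, so that dominated convergence applies as $r\to\infty$. This requires a sharp enough Gaussian upper bound for the Poincar\'e heat kernel, of the form $p(0,y,s)\le C(t)\,s^{-1}\exp(-\dist_P(0,y)^2/(4s)+C(t)s)$ on $(\D,g_P)$, valid for $s\in(0,t]$; such bounds are available because $(\D,g_P)$ is complete with bounded geometry (indeed constant curvature). A secondary technical point is verifying that the stopping-time/Dirichlet-approximation argument produces no boundary term at $s=0$ and that $s\mapsto D^r_s\Delta_P f(0)$ is continuous up to $s=0$ with value $\Delta_P f(0)$, which follows from $\Delta_P f$ being continuous at $0$ together with the defining property of the heat kernel as an approximate identity (the distributional limit in \eqref{e:diffusions}).
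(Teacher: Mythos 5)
Your overall strategy (exhaust $(\D,g_P)$ by hyperbolic balls, prove the formula locally, then pass to the limit by playing the exponential-in-distance bound from weak moderateness against Gaussian heat-kernel decay) is reasonable and close in spirit to what is needed; note that the paper itself does not argue from scratch but simply invokes Lemma 5.1 of \cite{NguyenVietAnh18d} and observes that its proof applies at the base point $\xi=0$ under the weaker hypothesis. However, your key intermediate identity is false as stated: for the Dirichlet (killed) semigroup $D^r_s$ on the ball $B_r$ one does \emph{not} have $(D^r_tf)(0)-f(0)=\int_0^t (D^r_s\Delta_P f)(0)\,ds$ ``with no boundary contribution''. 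Green's second identity kills only the term $p_r\,\partial_n f$ (since $p_r=0$ on $\partial B_r$), but leaves the flux term $\int_{\partial B_r}\partial_n p_r(0,\cdot,s)\,f\,dS$, which is exactly the contribution of paths exiting $B_r$ before time $t$. Concretely, for $f\equiv 1$ the right-hand side of your identity is $0$ while the left-hand side equals $\P_0(\tau_r>t)-1<0$, where $\tau_r$ is the exit time from $B_r$: the killed semigroup is not conservative, so the displayed identity cannot hold.

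The gap is fixable, but the fix is precisely where the analytic work lies. Either carry the flux term and show it vanishes as $r\to\infty$, or equivalently use Dynkin's formula for the stopped process, $\E_0\big[f(B_{t\wedge\tau_r})\big]-f(0)=\E_0\big[\int_0^{t\wedge\tau_r}\Delta_P f(B_s)\,ds\big]$, whose left-hand side differs from $(D^r_tf)(0)-f(0)$ by the exit contribution $\E_0\big[f(B_{\tau_r});\,\tau_r\le t\big]$. To make that extra term vanish you must beat the growth $|f|\lesssim e^{cr}$ on $\partial B_r$ permitted by weak moderateness with a tail estimate of the type $\P_0(\tau_r\le t)\le C\exp\big(-r^2/(Ct)\big)$ for the radial part of hyperbolic Brownian motion; this is where sharp heat-kernel/exit-time estimates genuinely enter, rather than in the (correct, but easier) dominated-convergence step you describe for $D^r_tf(0)\to D_tf(0)$ and $\int_0^t D^r_s\Delta_Pf(0)\,ds\to\int_0^t D_s\Delta_Pf(0)\,ds$. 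A further warning sign is that your argument never uses the hypothesis that $|df|_P$ is weakly moderate: in an integration-by-parts argument against the full heat kernel on large balls, that hypothesis is what controls the boundary term involving $p\,\partial_n f$, so its absence from your proof reflects the unaccounted boundary contributions.
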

\proof Lemma 5.1 in \cite{NguyenVietAnh18d} shows that
if $f,$ $|df|_P$  and $\Delta_P f$  are  moderate functions, then 
$$
(D_tf)(0) - f(0)=\int_0^t (D_s \Delta_P  f) (0) ds,\qquad t\in\R^+,\qquad\xi\in\D.
$$
Now under the  weaker  hypothesis that $f,$ $|df|_P$ and $\Delta_P f$  are weakly moderate,
 the same proof shows that  the above equality holds for $\xi=0.$ This proves the lemma.
\endproof
 The next lemma  shows us how  deep a leaf can go into  a singular flow  box before the hyperbolic  time $R.$
\begin{lemma}\label{L:go_deeper}{\rm  \cite[Lemma 3.2]{NguyenVietAnh18b}}
There is a constant $c>0$ with the following property. 
Let $x=(z,w)\in  (1/2 \D)^2$ and $\xi\in \D$   be such that
$\phi_x(t\xi)\in (1/2 \D)^2 $ for all $t\in[0,1].$  
Write $y:=\phi_x(\xi)$ and $R:=\dist_P(\xi,0).$
Then there  exists  $\zeta\in \Pi_x$  (see \eqref{e:leaf_equation} above) such that 
  $    y   =      (z e^{\zeta },we^{\lambda\zeta }) $
  and that
  $$|\zeta|\leq  e^{c R}|\log \|x\||.$$
 \end{lemma}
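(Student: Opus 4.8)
The plan is to lift the Poincar\'e geodesic joining $0$ to $\xi$ through the explicit local parametrization $\psi_x$ of \eqref{e:leaf_equation}, and then to control the Euclidean size of the lift by a Gronwall-type estimate fed by the metric comparison in Lemma \ref{L:Poincare}(2). Assume $\xi\neq0$, the case $\xi=0$ being trivial. Since the $g_P$-geodesics of $\D$ through the origin are the Euclidean diameters, $t\mapsto t\xi$, $t\in[0,1]$, is the Poincar\'e geodesic from $0$ to $\xi$, of length $\dist_P(\xi,0)=R$; as $\phi_x$ is a local isometry for the leafwise Poincar\'e metric, the path $\gamma(t):=\phi_x(t\xi)$ has Poincar\'e length $R$ and, by hypothesis, stays inside $(1/2\D)^2\cap L_x$. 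Being connected and issued from $x$, it lies in the piece $\widehat L_x=\psi_x(\Pi_x)$ of the leaf inside the flow box (see \eqref{e:Riemann_surface_L_x}); and since $\psi_x$ is injective on $\C$ (because $\lambda\notin\R$) while $\Pi_x$ is convex, $\psi_x\colon\Pi_x\to\widehat L_x$ is a biholomorphism, so $\gamma$ lifts to a smooth path $\zeta\colon[0,1]\to\Pi_x$ with $\zeta(0)=0$ and $\psi_x\circ\zeta=\gamma$. Thus $y=\psi_x(\zeta(1))$ has the form $(ze^{\zeta},we^{\lambda\zeta})$ claimed in the statement (up to the harmless relabeling $\zeta\mapsto i\zeta$ between the two parametrization conventions), and it remains to bound $|\zeta(1)|$.

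Let $s(t):=\int_0^t|\zeta'(\tau)|\,d\tau$ be the Euclidean arclength of the lift. On one hand, Lemma \ref{L:Poincare}(2) compares $\psi_x^*g_P$ with $i\,d\zeta\wedge d\bar\zeta/(\lof\,\psi_x(\zeta))^2$ on $\Pi_x$, so that the Poincar\'e length $R$ of $\gamma$ controls, up to a multiplicative constant, the integral $\int_0^1 s'(t)\,\bigl(\lof\,\psi_x(\zeta(t))\bigr)^{-1}\,dt$. On the other hand,
$$
\|\psi_x(\zeta)\|^2=|z|^2e^{-2\Im\zeta}+|w|^2e^{-2\Im(\lambda\zeta)}\geq\|x\|^2\,e^{-2c_1|\zeta|},\qquad c_1:=\max(1,|\lambda|),
$$
an inequality insensitive to whether $z$ or $w$ vanishes; since $\|\psi_x(\zeta)\|<1$ on the flow box, it gives
$$
\lof\,\psi_x(\zeta)=1-\log\|\psi_x(\zeta)\|\ \leq\ \lof\|x\|+c_1|\zeta|\ \leq\ \lof\|x\|+c_1\,s(t)\qquad\text{when }\zeta=\zeta(t).
$$
Combining the two bounds yields
$$
R\ \gtrsim\ \int_0^1\frac{s'(t)}{\lof\|x\|+c_1 s(t)}\,dt\ =\ \frac1{c_1}\log\!\Big(1+\frac{c_1 s(1)}{\lof\|x\|}\Big),
$$
hence $s(1)\lesssim\lof\|x\|\,e^{cR}$ for a suitable constant $c>0$. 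Since $|\zeta(1)|\leq s(1)$ and $\lof\|x\|\leq c_3|\log\|x\||$ on $(1/2\D)^2$ (where $\|x\|<1$ keeps $|\log\|x\||$ bounded below), one obtains $|\zeta|\lesssim e^{cR}|\log\|x\||$; enlarging $c$ to absorb the implicit constant — treating separately the ranges $R\leq1$, where $e^{cR}-1\lesssim R$, and $R\geq1$ — gives the stated bound $|\zeta|\leq e^{cR}|\log\|x\||$.

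The genuinely delicate points are the lower bound $\|\psi_x(\zeta)\|\gtrsim\|x\|\,e^{-c_1|\zeta|}$ and its correct insertion into the Gronwall loop: one must prevent the lift from reaching a region where $\psi_x(\zeta)$ is tiny without $|\zeta|$ being correspondingly large, and one must ensure that the comparison function $\lof\|x\|+c_1 s(t)$ dominates $\lof\,\psi_x(\zeta(t))$ along the \emph{whole} path rather than only at its endpoint. The structural facts that make everything fit are that $t\mapsto t\xi$ is an actual geodesic — so $\gamma$ has length \emph{equal to} $R$, not merely at least $R$ — and that $\Pi_x$ is convex, so the lift is globally defined on $[0,1]$.
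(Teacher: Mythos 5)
The paper does not prove Lemma \ref{L:go_deeper}: it is recalled verbatim from \cite[Lemma 3.2]{NguyenVietAnh18b}, so there is no in-text proof here to compare against. Your argument is correct and is the natural route to the estimate: after lifting the geodesic $t\mapsto\phi_x(t\xi)$ to $\Pi_x$ via the biholomorphism $\psi_x|_{\Pi_x}$ (legitimate since $\lambda\notin\R$ makes $\psi_x$ injective, $\Pi_x$ is convex, and the hypothesis $\gamma\subset(1/2\D)^2$ confines $\gamma$ to the connected component $\widehat L_x=\psi_x(\Pi_x)$ of $L_x\cap\D^2$ through $x$), Lemma \ref{L:Poincare}(2) together with the elementary bound $\|\psi_x(\zeta)\|\geq\|x\|\,e^{-\max(1,|\lambda|)\,|\zeta|}$ gives $R\gtrsim\log\bigl(1+c_1 s(1)/\lof\|x\|\bigr)$, which inverts to the claimed inequality. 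You are also right that the final constant absorption genuinely needs the refinement $e^{cR}-1\lesssim R$ on the range $R\leq 1$ — a blanket enlargement of $c$ to swallow a multiplicative constant $A>1$ into $e^{cR}$ fails as $R\to 0$ — and you handle this correctly.
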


 The  following   result gives  an estimate on  the  expansion rate up to order $2$ of  $\mathcal H(\omega,\cdot)$ in terms of   $\dist_P(\cdot,0)$
  and  the  distance  $\dist(x,E).$  
\begin{proposition}\label{P:expansion_rate}
  There  is a  constant $c>0$ such that for every $x\in X\setminus E$ and every
  $ \xi\in\D,$
 \begin{eqnarray*}  \big |  \kappa_x(\xi) -\kappa_x(0)   \big |
 &\leq & c \lof \dist(x,E)\ \cdot\  \exp{\big (c\, \dist_P(\xi, 0)\big)}, \\
  \big |  |d\kappa_x(\xi) |_P-|d\kappa_x(0)|_P   \big |
& \leq& c \lof \dist(x,E)\ \cdot\  \exp{\big (c\, \dist_P(\xi,0)\big)}, \\
  \big |  \Delta_P\kappa_x(\xi )-\Delta_P\kappa_x(0)   \big |
 &\leq&  c (\lof \dist(x,E))^2\ \cdot\  \exp{\big (c\, \dist_P(\xi, 0)\big)}.
\end{eqnarray*}  
\end{proposition}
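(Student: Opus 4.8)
The plan is to reduce all three bounds to the pointwise estimates of Lemma~\ref{L:laplacian_hol}, via the conversion rules \eqref{e:change_spec}--\eqref{e:change_spec_bis} together with one geometric comparison along the leaf $L_x$. Fix $x\in X\setminus E$ and $\xi\in\D$, set $y:=\phi_x(\xi)$, and let $m_\xi(\zeta):=(\zeta-\xi)/(1-\bar\xi\zeta)$ be the $g_P$-isometry of $\D$ with $m_\xi(\xi)=0$, so that \eqref{e:change_spec} reads $\kappa_y\circ m_\xi=\kappa_x-\kappa_x(\xi)$. Differentiating this once at $\xi$ and using that $m_\xi$ preserves $g_P$ gives $|d\kappa_x(\xi)|_P=|d\kappa_y(0)|_P$, while \eqref{e:change_spec_bis} already gives $\Delta_P\kappa_x(\xi)=\Delta_P\kappa_y(0)$. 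For the zeroth-order quantity I would integrate $d\kappa_x$ along the unit-speed Poincar\'e geodesic $\gamma\colon[0,L]\to\D$ from $0$ to $\xi$ (so $L=\dist_P(\xi,0)$): with $y_s:=\phi_x(\gamma(s))$ one gets $|\kappa_x(\xi)-\kappa_x(0)|\le\int_0^L|d\kappa_x(\gamma(s))|_P\,ds=\int_0^L|d\kappa_{y_s}(0)|_P\,ds$. Everything is now reduced to (a) global pointwise bounds on $|d\kappa_z(0)|_P$ and $|\Delta_P\kappa_z(0)|$, and (b) control of $\dist(y_s,E)$ by $\dist(x,E)$ and $s$.

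For (a) I would quote Lemma~\ref{L:laplacian_hol}(2) inside a singular flow box $({1\over2}\D)^2$: its first inequality bounds $|d\kappa_z(0)|_P$ by $c\,\lof\|z\|$, and in its second inequality both terms are $\lesssim(\lof\|z\|)^2$ because $|z_1|^2|z_2|^2/(|z_1|^2+|z_2|^2)^2\le{1\over4}$ and $\|z\|\le\sqrt2$; since $\|z\|\approx\dist(z,E)$ there, this reads $|d\kappa_z(0)|_P\le c\,\lof\dist(z,E)$ and $|\Delta_P\kappa_z(0)|\le c\,(\lof\dist(z,E))^2$. On the compact set $X\setminus\bigcup_{a}{1\over4}\U_a$, which contains no singularity, the holonomy cocycle has uniformly bounded first and second leafwise derivatives (a standard fact about $\mathcal H$ on compact subsets of $X\setminus E$, cf.\ \cite{NguyenVietAnh18b}) and $\eta\ge c^{-1}$ by Lemma~\ref{L:Poincare}(1); since $\lof\dist(\cdot,E)\ge1$ everywhere, the same two inequalities hold there for $c$ large, hence on all of $X\setminus E$.

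The hard part is (b): the comparison $\lof\dist(\phi_x(\xi),E)\le c\,e^{c\,\dist_P(\xi,0)}\,\lof\dist(x,E)$. I would prove it intrinsically along $L_x$, so that no discussion of how often the leaf enters and leaves the singular flow boxes is needed. Because $\phi_x\colon(\D,g_P)\to(L_x,g_P)$ is a local isometry and $g_X|_{L_x}=\eta^2g_P$ by \eqref{e:relation_Poincare_Hermitian_metrics}, the curve $s\mapsto y_s$ (along the unit-speed geodesic from $0$ to $\xi$) has $g_X$-speed exactly $\eta(y_s)$; as $\dist(\cdot,E)$ is $1$-Lipschitz for $g_X$, the function $r(s):=\dist(y_s,E)$ is locally Lipschitz, and Lemma~\ref{L:Poincare}(1) gives $|{d\over ds}\log r(s)|\le\eta(y_s)/r(s)\le c\,\lof r(s)$ for a.e.\ $s$. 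Writing $v(s):=\lof r(s)=1+|\log r(s)|\ge1$ this is $|v'(s)|\le c\,v(s)$, whence $v(s)\le v(0)e^{cs}$ by Gr\"onwall, which is (b). As an alternative one could instead use Lemma~\ref{L:go_deeper} on the excursions of the geodesic into the singular flow boxes, at the cost of a M\"obius re-basing each time; the intrinsic argument avoids this. I expect this step — and in particular the correct use of the sharp estimate $\eta\approx\dist(\cdot,E)\lof\dist(\cdot,E)$ rather than the crude bound $\eta\le c$ — to be the crux of the proof.

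Assembling the pieces then finishes things. The first estimate follows from $|\kappa_x(\xi)-\kappa_x(0)|\le\int_0^L c\,\lof\dist(y_s,E)\,ds\le\int_0^L c\,e^{cs}\,\lof\dist(x,E)\,ds\le c\,e^{cL}\,\lof\dist(x,E)$. For the second, $|d\kappa_x(\xi)|_P=|d\kappa_y(0)|_P\le c\,\lof\dist(y,E)\le c\,e^{c\,\dist_P(\xi,0)}\,\lof\dist(x,E)$ by (a) and (b), together with $|d\kappa_x(0)|_P\le c\,\lof\dist(x,E)$, so the triangle inequality gives the claim. The third estimate is the same argument with $\Delta_P\kappa$ in place of $d\kappa|_P$, using \eqref{e:change_spec_bis} and the pointwise bound $|\Delta_P\kappa_z(0)|\le c(\lof\dist(z,E))^2$ (the $(\lof\dist(x,E))^2$ factor now appears, and squaring the exponential is harmless since $(e^{c\,\dist_P(\xi,0)})^2=e^{2c\,\dist_P(\xi,0)}$).
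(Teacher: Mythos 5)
Your proof is correct and takes a genuinely different route from the paper's. The paper proceeds combinatorially: it subdivides the Poincar\'e geodesic $\{\phi_x(t\xi):t\in[0,1]\}$ into $n\leq 1+r_0^{-1}\dist_P(\xi,0)$ pieces each contained in a single flow box, proves the estimate on each piece (using Lemma~\ref{L:go_deeper} in singular boxes to control the excursion depth, and compactness in regular boxes), and sums the resulting telescope. You instead isolate two ingredients --- (a) the pointwise global bounds $|d\kappa_z(0)|_P\lesssim\lof\dist(z,E)$ and $|\Delta_P\kappa_z(0)|\lesssim(\lof\dist(z,E))^2$ coming from Lemma~\ref{L:laplacian_hol}, and (b) the intrinsic Gr\"onwall estimate $\lof\dist(\phi_x(\gamma(s)),E)\leq e^{cs}\,\lof\dist(x,E)$ along the unit-speed Poincar\'e geodesic, deduced from $\eta\approx\dist(\cdot,E)\lof\dist(\cdot,E)$ (Lemma~\ref{L:Poincare}) and the $g_X$-Lipschitz property of $\dist(\cdot,E)$ --- and then get all three bounds uniformly from the conversion rules \eqref{e:change_spec}--\eqref{e:change_spec_bis} plus (a) and (b). This is cleaner and more geometric: your estimate (b) does the work of both the flow-box subdivision and Lemma~\ref{L:go_deeper}, and it makes transparent where the $\lof\dist$ versus $(\lof\dist)^2$ discrepancy between the first two estimates and the third comes from (the power in the pointwise bound). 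The paper's route buys a more explicit description of which flow boxes the geodesic visits (useful elsewhere, e.g.\ in Section~\ref{S:Negativity}), whereas yours buys a single scalar ODE. Minor points to verify when writing this up: $r(s):=\dist(\phi_x(\gamma(s)),E)$ is only locally Lipschitz (it stays away from $0$ on compact $s$-intervals since $\phi_x$ avoids $E$), so the derivative bound holds a.e.\ and the integral form of Gr\"onwall should be invoked; and the global validity of $\eta(x)\lesssim\dist(x,E)\lof\dist(x,E)$ is indeed how Lemma~\ref{L:Poincare}(1) is stated (the near-singularity estimate and the boundedness away from singularities are mutually consistent).
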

\begin{proof} Fix $\xi\in\D$ and   set $y:=\phi_x(\xi).$ 
Let $\Uc$ be the    finite  cover   of $X$  by  regular  and  singular  flow boxes  given in Subsection \ref {SS:local_model}.
We consider  three steps.\\
{\bf Step   1:}  {\it If  there is a singular  flow  box $\U$  which  contains  the whole segment $\{\phi_x(t\xi):\ t\in[0,1]\},$ then the proposition is  true for  $c=c_1,$
where $c_1>0$ is  a constant large enough.}

Write $x=(z,w)$ and 
  $R:= \dist_P(0,\xi).$ By Lemma \ref{L:go_deeper}, we may write
$y =(ze^\zeta,we^{\lambda\zeta})$ for some $\zeta\in\C$   such that
 \begin{equation}\label{e:zeta_R}|\zeta|\leq  e^{c_2R}.
 \end{equation}
 Inserting   this  into the  expression  for the holonomy map given in Lemma \ref{L:holonomy},
a straightforward computation shows  that
$$
  \big |  \kappa_x(\xi) -\kappa_x(0)   \big |=\big|\ln  \Phi_x(\zeta)\big|\leq  c_3  |\log \|x\||  e^{c_3R} 
$$  
  for a constant $c_3>0$ independent of $x$ and $y.$

  Next,  we deduce from the first  inequalities  of Lemma \ref{L:laplacian_hol} (2) 
  that
\begin{equation*}
\big |  |d\kappa_x(\xi) |_P-|d\kappa_x(0)|_P   \big | \leq  |d\kappa_x(\xi) |_P+|d\kappa_x(0)|_P   
\lesssim       \lof {\|x\|}+\lof {\|y\|}\leq  c_3  |\log \|x\||  e^{c_3R} ,
\end{equation*}
  for a constant $c_3>0$ independent of $x$ and $y,$
  where the last inequality holds by  \eqref{e:zeta_R}.

  By \eqref{e:change_spec_bis}, we have
$\Delta_P \kappa_y(0)=\Delta_P\kappa_x(\xi).$  Consequently, we deduce from the second  inequalities  of Lemma \ref{L:laplacian_hol} (2) and  \eqref{e:zeta_R}
  that
\begin{equation*}
   \big |  \Delta_P\kappa_x(\xi )-\Delta_P\kappa_x(0)   \big |\leq   \big |  \Delta_P\kappa_y(0 )\big |+\big|\Delta_P\kappa_x(0)   \big |
\lesssim       (\lof {\|x\|})^2+(\lof {\|y\|})^2\leq  c_3  (\log \|x\|)^2  e^{c_3R} ,
\end{equation*}
for a constant $c_3>0$ independent of $x$ and $y.$

  Choosing $c_1>c_3$ large enough, Step 1 follows from the above estimates.

\noindent
{\bf Step   2:}  {\it   If  the whole segment  $\{\phi_x(t\xi):\ t\in[0,1]\}$
is contained  in a single   regular  flow  box $\U\in\Uc,$  then 
$$ |  \kappa_x(t\xi)    |\leq c_4\quad\text{and}\quad 
   |  d\kappa_x(t\xi) |_P
 \leq  c_4 \quad\text{and}\quad
  \big |  \Delta_P\kappa_x(t\xi )  \big |
 \leq  c_4\quad\text{
for all}\quad  t\in[0,1].$$
Here $c_4>0$ is a constant independent of $x$ and $y.$ In particular, the proposition  is true in this  case for $c=c_1,$ where
 $c_1>0$ is  a constant large enough.}
 
 Observe  that the geodesic segment   $\{\phi_x(t\xi):\ t\in[0,1]\}$ is contained in the unique  plaque  of $\U$     which   passes through $x.$
 Moreover,  by Lemma \ref{L:Poincare}, $\eta\approx 1$ on  $\U.$
 This, combined   with  the  description of the  holonomy map  on  the regular  flow box $\U,$    implies the above three estimates.
Therefore, choosing $c_1>c_4$ large  enough,   we have that 
$c_1 \lof \dist(x,E)\geq c_4.$
This  proves the proposition  in  Step  2.
  
 \noindent
{\bf Step  3:}  {\it  Proof of the proposition in the general case.}

 We only prove the last   inequality of the proposition. The other  two inequalities can be proved similarly. 
Consider the family  of all   finite subdivisions of 
 $[0,1]$ into  intervals $[t_{j-1},t_j]$  with  $1\leq j\leq n$  such that
 $t_0=0,$ $t_n=1$ and that   each segment  $\{\phi_x(t\xi):\ t\in [t_{j-1},t_j]\}$ is  contained in a single (regular or singular) flow box $\U_j$ for each $j$.
 Fix a  member of this  family  such  that the  number $n$ is  smallest possible.
 We may assume  without loss of generality that $n>1$ since     the  case  $n=1$ follows  either from  Step 1 (if
 $\U_1$  is  singular)  or from Step 2 (if $\U_1$ is  regular).
The minimality of $n$ implies that all $\phi_x(t_1\xi),\ldots,\phi_x(t_{n-1}\xi)$ belong to the union of all regular  flow boxes of $\Uc.$
Therefore,  there is a constant $r_0>0$ independent of $x$ and $y$ such that 
$$
  \dist_P(t_j\xi,t_{j+1}\xi)\geq  r_0,\qquad 1\leq j\leq n-1. 
$$
 Thus  
\begin{equation}\label{eq_n}
n\leq  1+   r_0^{-1} \dist_P(\xi,0)= 1+r_0^{-1} R.
\end{equation}
Moreover,
 there is a constant $c_5>1$ independent of $\omega$ such that 
$$
1\leq \big(\lof \dist(\phi_x(t_j\xi),E)\big)^2\leq  c_5,\qquad 1\leq j\leq n-1. 
$$
 Using this and  applying Step 1 to each singular box in the family   $(\U_j)_{j=1}^n$ and  applying Step 2 to each regular 
 flow  box in  the above family, we obtain that
 \begin{eqnarray*}
\big |  \Delta_P\kappa_x(t_1\xi )  -   \Delta_P\kappa_x(0 ) \big|
 &\leq & c_1 \big(\lof \dist(x,E)\big)^2\cdot\exp{\big (c_1 \dist_P(0,t_1\xi)\big)} ,\\
 \big |  \Delta_P\kappa_x(t_j\xi )  -   \Delta_P\kappa_x(t_{j-1}\xi) \big|
&\leq&   c_1c_5\exp{\big (c_1\dist_P(t_{j-1}\xi,t_j\xi)\big)}   ,\quad 2\leq j\leq n.
\end{eqnarray*}
 Summing  up the above  estimates,  we  get that
 \begin{multline*}
 \sum_{j=1}^n     \big |  \Delta_P\kappa_x(t_j\xi )  -   \Delta_P\kappa_x(t_{j-1}\xi) \big|
 \leq  c_1 \big(\lof \dist(x,E)\big)^2\cdot\exp{\big ( c_1 \dist_P(0,t_1\xi)\big)}\\
+\sum_{j=2}^n c_1c_5\exp{\big (c_1 \dist_P(t_{j-1}\xi,t_j\xi)\big)}.
\end{multline*}
 On the other hand,  we have that  
 $$
  \big |  \Delta_P\kappa_x(\xi )  -   \Delta_P\kappa_x(0) \big|\leq \sum_{j=1}^n     \big |  \Delta_P\kappa_x(t_j\xi )  -   \Delta_P\kappa_x(t_{j-1}\xi) \big|.
$$
 This,  coupled   with the previous estimate, gives that  
  \begin{equation}\label{eq_two_sums_new}
  \begin{split}
 \big |  \Delta_P\kappa_x(\xi )  -   \Delta_P\kappa_x(0) \big|&\leq c_1\big(\lof \dist(\omega(x,E)\big)^2\cdot\exp{\big ( c_0 \dist_P(0,t_1\xi)\Big)}\\
&+
\sum_{j=2}^n c_1c_5\exp{\big (c_1 \dist_P(t_{j-1}\xi,t_j\xi)\big)}.
\end{split}
\end{equation}
  Since    $
 \lof \dist(x,E)\geq 1$ for all $x\in X\setminus E,$   the right hand side of the last line is  dominated by
a constant times $\lof \dist(x,E)$ times
$$
\sum_{j=1}^n  \exp{\big( c_1 \dist_P(t_{j-1}\xi,t_j\xi)\big)}\leq   n\cdot \exp{\big (c_1 \dist_P(0,\xi)\big)},
$$
where the last inequality holds because  of the identity
 $$
  \dist_P(0,\xi)=\sum_{j=1}^n\dist_P(t_{j-1}\xi,t_j\xi).
  $$
  Inserting (\ref{eq_n}) into the right hand side  of the last inequality and  choosing $c>c_1$ large enough, 
we find that its left hand  side  is bounded by   $c\, \exp{ (cR)}.$
So  the  right hand  side   of (\ref{eq_two_sums_new}) is  also bounded by a constant times $\big(\lof \dist(x,E)\big)^2\cdot \exp{\big (c \,\dist_P(0,\xi)\big)},$ and the proof is  thereby completed.
\end{proof}
The  following  result  relates the Lyapunov exponent $\chi(T)$ to the function $\kappa$ defined in \eqref{e:kappa}. It plays  the key role in this  article. 
\begin{proposition}\label{P:Lyapunov_vs_kappa}
 Under the  hypotheses and notations  of Theorem \ref{T:VA},
  the integrals
 $\int_X|\kappa(x)|d\mu(x)$  and $\int_X W(x)d\mu(x)$  are  bounded,  and the following identity holds
 $$
 \chi(T)=\int_X\kappa(x)d\mu(x).
 $$ 
\end{proposition}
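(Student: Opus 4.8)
The plan is to deduce all three assertions from the single pointwise identity
\begin{equation}\label{e:F_eq_int_D_kappa}
F_t(x)=\int_0^t (D_s\kappa)(x)\,ds,\qquad x\in X\setminus E,\ t\in\R^+,
\end{equation}
and then to integrate it against $\mu.$ We will also use that $\chi(T)$ is a finite real number: by \eqref{e:Lyapunov_exp_bis} it equals $\int_X F_1\,d\mu,$ and $|F_1|\le c\,\lof\dist(\cdot,E)$ with $\int_X\lof\dist(\cdot,E)\,d\mu<\infty$ by Theorem \ref{T:VA_bis}.

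\medskip\noindent\emph{Step 1: the identity \eqref{e:F_eq_int_D_kappa}.} By \eqref{e:varphi_n_diffusion} one has $F_t(x)=(D_t\kappa_x)(0)$ and $\kappa_x(0)=0.$ The function $\kappa_x$ is real-analytic on $\D$ (the holonomy cocycle is everywhere positive), and by Proposition \ref{P:expansion_rate} the functions $\kappa_x,$ $|d\kappa_x|_P$ and $\Delta_P\kappa_x$ are weakly moderate for each fixed $x.$ Hence Lemma \ref{L:D_t_Delta} applies and gives $(D_t\kappa_x)(0)=\int_0^t(D_s\Delta_P\kappa_x)(0)\,ds.$ By \eqref{e:change_spec_bis} and the definition \eqref{e:kappa} of $\kappa$ one has $\Delta_P\kappa_x=\kappa\circ\phi_x$ on $\D$; combined with the commutation relation \eqref{e:commutation} --- extended in the obvious way from bounded functions to functions of at most exponential growth in $\dist_P(\cdot,0),$ which $\kappa\circ\phi_x$ satisfies by Proposition \ref{P:expansion_rate}, the heat kernel of $(\D,g_P)$ having Gaussian decay --- this yields $(D_s\Delta_P\kappa_x)(0)=(D_s\kappa)(x),$ and hence \eqref{e:F_eq_int_D_kappa}.

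\medskip\noindent\emph{Step 2: decomposition of $\kappa$ and conclusion.} In each singular flow box $\U_a\simeq\D^2$ write $\kappa=\kappa'+\Delta_P\varphi$ as in \eqref{e:change_kappa}, where $\kappa'$ is the curvature density for a metric constant in that chart and $\Delta_P\varphi=O(\|x\|(\lof\|x\|)^2)$ is bounded; the computation in the proof of Lemma \ref{L:laplacian_hol}(2) shows, in fact, that $\kappa'\le 0$ and $-\kappa'\approx\frac{|z|^2|w|^2}{(|z|^2+|w|^2)^2}(\lof\|(z,w)\|)^2.$ Cutting $-\kappa'$ off to vanish outside $\bigcup_{a}\U_a$ we obtain a decomposition $\kappa=\kappa_0-\rho$ with $\kappa_0$ bounded on $X\setminus E,$ $\rho\ge 0,$ $\rho\approx\frac{|z|^2|w|^2}{(|z|^2+|w|^2)^2}(\lof\|(z,w)\|)^2$ on $\bigcup_{a}(1/2)\U_a,$ and $\rho=0$ off a neighbourhood of $E.$ From \eqref{e:F_eq_int_D_kappa},
\begin{equation*}
\int_0^t (D_s\rho)(x)\,ds=\int_0^t (D_s\kappa_0)(x)\,ds-F_t(x),\qquad x\in X\setminus E,
\end{equation*}
a pointwise identity of finite quantities. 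Integrating against $\mu$: the left-hand side, by Tonelli's theorem and the $D_s$-invariance of $\mu$ (Proposition \ref{P:harmonic_currents_vs_measures}(4)) applied to the nonnegative function $\rho,$ equals $t\int_X\rho\,d\mu\in[0,\infty]$; the first term on the right, $\kappa_0$ being bounded, equals $t\int_X\kappa_0\,d\mu$; and $\int_X F_t\,d\mu=t\chi(T)$ by \eqref{e:Lyapunov_exp_bis}. Hence $\int_X\rho\,d\mu=\int_X\kappa_0\,d\mu-\chi(T),$ which is finite. It follows that $\int_X|\kappa|\,d\mu\le\|\kappa_0\|_\infty\,\mu(X)+\int_X\rho\,d\mu<\infty,$ and since $W\lesssim\lof\dist(\cdot,E)+\rho$ on $X$ also $\int_X W\,d\mu<\infty$ by Theorem \ref{T:VA_bis}. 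Finally, rearranging the last identity gives $\chi(T)=\int_X(\kappa_0-\rho)\,d\mu=\int_X\kappa\,d\mu.$

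\medskip\noindent\emph{Main difficulty.} The key conceptual point is to run the integration step \emph{without} assuming a priori that $\kappa$ (equivalently $W$) is $\mu$-integrable: this is what forces us to split off the bounded part $\kappa_0$ and to invoke Tonelli's theorem together with the $D_s$-invariance of $\mu$ on the nonnegative remainder $\rho,$ so that $\int_X\rho\,d\mu<\infty$ is \emph{deduced} from the already known finiteness of $\chi(T).$ The other, more technical, difficulty is to legitimize \eqref{e:F_eq_int_D_kappa}: one must check the hypotheses of Lemma \ref{L:D_t_Delta} for the leafwise-unbounded function $\kappa_x$ via the weak-moderateness estimates of Proposition \ref{P:expansion_rate}, and extend the commutation relation \eqref{e:commutation} accordingly; controlling the sign and the two-sided size of $\rho$ for a general ambient metric, through the reduction to a metric constant in each singular chart, is a further routine ingredient.
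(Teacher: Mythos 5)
Your proof is correct and follows essentially the same route as the paper: both establish the pointwise Dynkin identity $F_t(x)=\int_0^t(D_s\kappa)(x)\,ds$ via Proposition \ref{P:expansion_rate} and Lemma \ref{L:D_t_Delta}, and both then upgrade the finiteness of $\chi(T)$ to $\mu$-integrability of $\kappa$ by exploiting a one-sided bound on $\kappa$ together with Tonelli and the $D_s$-invariance of $\mu$ --- your decomposition $\kappa=\kappa_0-\rho$ with $\rho\ge 0$ is precisely the paper's device $\kappa\le c_0$, i.e.\ $|\kappa|\le 2c_0-\kappa$. The only organizational difference is that the paper first treats the Euclidean-near-$E$ case (Step 1) and reduces the general metric to it via $|\kappa|\lesssim W$ (Step 2), whereas you split off the nonpositive constant-in-chart curvature directly; both rely on the same estimates from Lemma \ref{L:laplacian_hol}.
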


 The novelty of this  proposition  is  that the weight $W(x)$ behaves like $(\lof \dist(x,E))^2$   when  $x=(z,w)$ satisfies $|z|\approx|w|,$  whereas our previous  work \cite{NguyenVietAnh18b} (see 
 Theorem \ref{T:VA_bis} above)     
  only provides the $\mu$-integrability  of  the less  singular  weight $\lof \dist(x,E).$
\proof  We divide the proof into 2 steps.

\noindent{\bf Step 1:} {\it Assume in addition that  the ambient metric $g_X$  is  equal to the Euclidean metric in a local model near  every  singular point of $\Fc.$}
By Proposition \ref{P:expansion_rate}, $\kappa_x,$
$|d\kappa_x|_P$ and $\Delta_P\kappa_x$ are weakly moderate functions  on $\D.$
Consequently, applying Lemma \ref{L:D_t_Delta} yields that
\begin{equation}\label{e:D_t_Delta}
(D_1\kappa_x)(0) - \kappa_x(0)=\int_0^1 (D_s (\Delta_P \kappa_x) ) (0) ds.
\end{equation}
By \eqref{e:varphi_n_diffusion}  and \eqref{e:F}, the left-hand side of \eqref{e:D_t_Delta} is 
 equal to
$$\E_x[\log \mathcal H(\omega,1)]=F_1(x),$$  
which is  finite   because of \eqref{e:growth_F}.
On the other hand, by \eqref{e:change_spec} and \eqref{e:kappa}, the right-hand side  of \eqref{e:D_t_Delta} can be rewritten as
$$
\int_0^1 (D_s  \kappa) (x) ds.
$$
Consequently, integrating  both sides of   \eqref{e:D_t_Delta}  with respect to  $\mu,$ we get that
\begin{equation}\label{e:F_t_vs_kappa}
\int_X  F_1(x)d\mu(x)=\int_X\big(\int_0^1 (D_s \kappa)(x)  ds\big)d\mu(x).
\end{equation}
Since  we know by  \eqref{e:Lyapunov_exp_bis}, \eqref{e:growth_F} and \eqref{e:integrability}  that the left intergral is   bounded and is  equal to $\chi(T),$ it follows that  right-side  double integral is also bounded. 

On the one hand, by the second inequalities in Lemma  \ref{L:laplacian_hol} (1) and \eqref{e:kappa}, $\kappa (x)\leq  0$  for every $x$   in a  singular flow box of  a  singular point  $a\in E.$
On the other hand,  using regular flow  box we see easily that $\kappa(x)\leq  c_0$  for every  $x$ outside the union of all singular flow boxes, where $c_0>0$ is  a constant.
Therefore,
\begin{equation}\label{e:kappa_upper_bound}
\kappa(x)\leq c_0\qquad\text{for}\qquad x\in X\setminus E.
\end{equation}
Moreover, $D_s$  is a positive  contraction for $s\in\R^+$ (see the second identity in \eqref{e:semi_group}.
Consequently, by Fubini's theorem,  we infer that for  almost every $s\in[0,1]$ with respect to the Lebesgue measure,
$$
\int_X (D_s \kappa) (x)d\mu(x)
$$
is bounded. 
Consequently,    by \eqref{e:kappa_upper_bound} and \eqref{e:semi_group} and  the obvious inequality $|\kappa|\leq 2c_0-\kappa,$
$$
\int_X(D_s|\kappa|)(x)d\mu(x)\leq  2c_0\int_X(D_s\textbf{1})(x)d\mu(x)  -\int_X(D_s\kappa)(x)d\mu(x)<\infty.
$$
Hence,  by  Proposition \ref{P:harmonic_currents_vs_measures} (4),
we infer that
$$
\int_X |\kappa (x)|d\mu(x)=\int_X (D_s |\kappa|) (x)d\mu(x)<\infty.
$$
 Thus,
we obtain that
$$
\int_X \kappa (x)d\mu(x)=\int_X (D_s \kappa) (x)d\mu(x).
$$
Inserting this  in  \eqref{e:F_t_vs_kappa} and using Fubini's theorem, we get that
$$
\chi(T)= \int_0^1\big(\int_X\kappa(x)  d\mu(x)\big)ds= 
\int_X \kappa (x)d\mu(x).
$$
Finally, using \eqref{e:W} and the  second  inequalities of Lemma \ref{L:laplacian_hol} (1) and inequality \eqref{e:integrability}, 
we  infer that
\begin{equation}\label{e:inequa_W}
\int_X W(x)d\mu(x)\lesssim \int_X |\kappa(x)|d\mu(x)+\int_X \lof{\dist(x,E)}d\mu(x)<\infty.
\end{equation}
The proof of Step 1 is thereby completed.

 \noindent{\bf Step 2:} {\it The general case.}  By \eqref{e:F_t_vs_kappa}, we have that 
 \begin{equation*}
\int_X  F_1(x)d\mu(x)=\int_X\big(\int_0^1 (D_s \kappa)(x)  ds\big)d\mu(x),
\end{equation*}
By the second inequalities of Lemma \ref{L:laplacian_hol} (2), there is a constant $c>0$ such that   $|\kappa(x)|\leq c  W(x)$
for $x\in X\setminus E.$ Hence,
$$
\int_X |\kappa(x)|d\mu(x)\leq c\int_X  W(x)d\mu(x).
$$ 
By \eqref{e:inequa_W}, the right  hand-side is  bounded. We infer  that
the  left hand-side  is also  bounded.
Hence,  we conclude the proof as in Step 1.
\endproof
\begin{corollary}\label{C:Lyapunov_vs_kappa}
 Under the  hypotheses and notations  of Theorem \ref{T:VA},   the following identities hold
 $$
  \kappa g_P=-c_1(\Nor(\Fc),g^\perp_X)\quad\text{on}\quad X\setminus E,\qquad\text{and}\qquad \int_X\kappa(x)d\mu(x)=-\int_X c_1(\Nor(\Fc),g^\perp_X)\wedge T.
 $$
\end{corollary}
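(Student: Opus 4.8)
The plan is to obtain both identities directly from the relation \eqref{e:kappa-varphi} between the curvature density $\kappa$ and the local weight of the singular Hermitian line bundle $(\Nor(\Fc),g^\perp_X)$, and to use the $\mu$-integrability of $\kappa$ (equivalently, of the weight $W$) just established in Proposition \ref{P:Lyapunov_vs_kappa} to make sense of the wedge product on the right-hand side.

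First I would prove the pointwise identity on $X\setminus E$. On any flow box $\U$ over which $\Nor(\Fc)$ is trivialized by a holomorphic frame $e_L$, the Chern curvature current is $c_1(\Nor(\Fc),g^\perp_X)|_\U=\ddc\varphi$, where $\varphi$ is the local weight of $g^\perp_X$ with respect to $e_L$ (Subsection \ref{SS:Chern}). Restricting to a leaf $L_x$ through $x\in\U\setminus E$ and using that $\ddc$ commutes with restriction to a submanifold gives $c_1(\Nor(\Fc),g^\perp_X)|_{L_x}=\ddc(\varphi|_{L_x})$; by \eqref{e:Laplacian} this is a multiple of $(\Delta_P\varphi)\,g_P$ on $L_x$, and \eqref{e:kappa-varphi} (see also \eqref{e:curvature-varphi}) says $\Delta_P\varphi=-\kappa$ on $X\setminus E$. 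Combining these one reads off the first assertion, $\kappa\,g_P=-c_1(\Nor(\Fc),g^\perp_X)$ as leafwise $(1,1)$-forms on $X\setminus E$, with the normalization fixed by \eqref{e:Laplacian}; in particular this identifies the leafwise part of the \emph{a priori} singular curvature form with the explicit expression $-\kappa\,g_P$.

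Then I would integrate against $\mu$. Since $\mu(E)=0$ and $\int_X|\kappa|\,d\mu<\infty$ by Proposition \ref{P:Lyapunov_vs_kappa}, the integral $\int_X\kappa\,d\mu$ is absolutely convergent and, by the definition \eqref{e:mu} of $\mu$, equals $\int_{X\setminus E}\kappa\,(T\wedge g_P)$. As $T$ is directed, wedging with the leafwise form $\kappa\,g_P$ involves only its restriction to leaves, so $\kappa\,(T\wedge g_P)=T\wedge(\kappa\,g_P)$, which by the first identity equals $-T\wedge c_1(\Nor(\Fc),g^\perp_X)$. The same integrability bound ensures that $T\wedge c_1(\Nor(\Fc),g^\perp_X)$ has finite mass on $X\setminus E$ — even though $c_1(\Nor(\Fc),g^\perp_X)$ blows up like $\dist(\cdot,E)^{-2}$ near the singular points, cf. Lemma \ref{L:trans_metric_near_sing}(4) — so that $\int_X c_1(\Nor(\Fc),g^\perp_X)\wedge T$, understood as $\int_{X\setminus E}T\wedge c_1(\Nor(\Fc),g^\perp_X)$, is well-defined. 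Assembling these yields $\int_X\kappa\,d\mu=-\int_X c_1(\Nor(\Fc),g^\perp_X)\wedge T$.

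The substantive input has therefore already been supplied upstream: it is the $\mu$-integrability of $W$, hence of $\kappa$, from Proposition \ref{P:Lyapunov_vs_kappa}, which is precisely what legitimizes the wedge product $T\wedge c_1(\Nor(\Fc),g^\perp_X)$ near $E$ in spite of the $\dist^{-2}$ singularity of the curvature. Granting that, the remaining points are bookkeeping, and also the only place where an error could slip in: reading both identities as statements about leafwise restrictions, noting that $g_P$ enters purely as a leafwise $(1,1)$-form, and using $\mu(E)=0$ to pass freely between integrals over $X$ and over $X\setminus E$.
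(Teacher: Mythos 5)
Your proposal is correct and follows essentially the same route as the paper: derive the pointwise leafwise identity $\kappa\,g_P=-c_1(\Nor(\Fc),g^\perp_X)$ directly from \eqref{e:kappa-varphi} together with the local-weight description of the Chern curvature, then integrate against $\mu=T\wedge g_P$, invoking $\mu(E)=0$ and the $\mu$-integrability of $\kappa$ from Proposition~\ref{P:Lyapunov_vs_kappa} to justify the resulting wedge product near $E$. The paper's proof is a terser version of exactly this argument.
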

\proof
 By \eqref{e:kappa-varphi}  we obtain that
 $$
 \kappa g_P=-(\Delta_P\varphi) g_P= -\ddc \varphi =-c_1(\Nor(\Fc),g^\perp_X)\qquad\text{on}\qquad \U.
 $$
 The first identity follows.  
Since 
 $\int_X|\kappa(x)|d\mu(x)<\infty$ by Proposition \ref{P:Lyapunov_vs_kappa},
  Integrating both sides of the first identity  over $X\setminus  E$ gives the second identity.  
\endproof
\proof[Proof of the first identity of   Theorem A]
As  in the proof of Proposition \ref{P:Lyapunov_vs_kappa}  we divide the proof into 2 steps.

\noindent{\bf Step 1:} {\it Assume in addition that  the ambient metric $g_X$  is  equal to the Euclidean metric in a local model near  every  singular point of $\Fc.$}
 
Fix  a  smooth Hermitian metric $g_0$ on the normal bundle $\Nor(\Fc)$ of $\Fc.$
So  there is  a global weight function $f:\ X\to [-\infty,\infty)$ such that $g_X^\perp=g_0\exp{(-2f)}.$
We know that the weight function $f$ is   smooth outside $E.$ 
Using  a finite partition of the unity on $X$ and applying   Lemma \ref{L:trans_metric_near_sing}  (see \eqref{e:f_smoothness}),  we can construct  a  family  of smooth  functions  $(f_\epsilon)_{0<\epsilon\ll 1}$ on $X$  such that
$f_\epsilon$ converges uniformly to $f$ in $\Cc^2$-norm on  each regular flow box as $\epsilon\to 0$ and that
 in a local model with coordinates $(z,w)$ associated to each singular point
$a\in E,$  
\begin{equation}\label{e:f_epsilon}f_\epsilon-{1\over2}\log{(|z|^2+|\lambda w|^2  +\epsilon^2)}=f-{1\over2}\log{(|z|^2+|\lambda w|^2 )}\qquad\text{on}\qquad \D^2.
\end{equation}
For every $0<\epsilon\ll 1$ we endow  $\Nor(\Fc)$  with  the  metric $g_\epsilon:=g_0\exp{(-2f_\epsilon)}.$
Since $g_\epsilon$ is  smooth and the current $T$ is  $\ddc$-closed, it follows that
\begin{equation}\label{e:cohomo_g_eps}
c_1(\Nor(\Fc))\smile \{T\}=\int_Xc_1(\Nor(\Fc),g_\epsilon)\wedge T.
\end{equation}
 Let $\kappa_\epsilon:\ X\setminus E\to\R$ be the function defined by 
 \begin{equation}\label{e:kappa_epsilon}
 -c_1(\Nor(\Fc),g_\epsilon)(x)|_{ L_x}= \kappa_\epsilon(x)g_P(x).
\end{equation}
 This, combined with \eqref{e:mu}, implies that
\begin{equation}\label{e:kappa_epsilon_bis}
 -c_1(\Nor(\Fc),g_\epsilon)\wedge T= \kappa_\epsilon d\mu.
\end{equation}
Since $f_\epsilon$ converges uniformly to $f$ in $\Cc^2$-norm on    compact subsets of $X\setminus E$  as $\epsilon\to 0,$ 
it follows that  $\kappa_\epsilon$ converge pointwise to $\kappa$ $\mu$-almost everywhere.
Hence, we  get that
\begin{equation*}
-\int_{X\setminus(\bigcup_{a\in E}  \U_a)}c_1(\Nor(\Fc),g_\epsilon)\wedge T=\int_{X\setminus(\bigcup_{a\in E}  \U_a)}\kappa_\epsilon(x)d\mu(x)
\to \int_{X\setminus(\bigcup_{a\in E}  \U_a)}\kappa(x) d\mu(x)\quad\text{as}\quad \epsilon\to 0.
\end{equation*}
We will  show that  on each  singular flow box $\U_a\simeq \D^2,$
\begin{equation}\label{e:inside}
-\int_{ \U_a}c_1(\Nor(\Fc),g_\epsilon)\wedge T\to \int_{  \U_a} \kappa(x)d\mu(x)\qquad\text{as}\qquad \epsilon\to 0.
\end{equation}
Taking   \eqref{e:inside} for granted, we combine it with  the previous limit and get that
$$
-\int_{X}c_1(\Nor(\Fc),g_\epsilon)\wedge T\to \int_X\kappa(x)d\mu(x)=-\int_{X}c_1(\Nor(\Fc),g^\perp_X)\wedge T  \qquad\text{as}\qquad \epsilon\to 0,
$$
where the last equality follows from  Corollary \ref{C:Lyapunov_vs_kappa}.
We deduce from this and \eqref{e:cohomo_g_eps} that
$$
-c_1(\Nor(\Fc))\smile \{T\}=\int_X\kappa(x)d\mu(x).
$$
By  Proposition \ref{P:Lyapunov_vs_kappa}, the right hand side is $\chi(T).$
Hence, the last   equality  implies the desired identity of the theorem.

Now  it remains to prove \eqref{e:inside}.
We need the  following result which gives a precise behaviour of $\kappa_\epsilon$ near a singular point $a$ 
 using the local model  $(\D^2,\Lc,\{0\})$ introduced in  Subsection   \ref{SS:local_model}. 
\begin{lemma}\label{L:laplacian_hol_bis}  
There is a constant $c>1$  such that for every $0<\epsilon\ll 1$ and 
for every  $x=(z,w)\in({1\over2}\D)^2,$  we have that  
\begin{multline*}
-c\big(  {|z|^2|w|^2\over (|z|^2+|w|^2+\epsilon^2)^2 }+  (|z|^2+|w|^2)\big)  (\lof {\|(z,w)\|})^2\leq  \kappa_\epsilon(x)\\
\leq \big(-c^{-1} {|z|^2|w|^2\over (|z|^2+|w|^2+\epsilon^2)^2} +c(|z|^2+|w|^2)\big)  (\lof {\|(z,w)\|})^2.
\end{multline*}  
\end{lemma}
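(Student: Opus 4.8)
\proof
We argue, as throughout Step~1, under the assumption that $g_X$ equals the Euclidean metric on each local model $\U_a\simeq\D^2$. The plan is to pin down the smooth form $c_1(\Nor(\Fc),g_\epsilon)$ inside a singular flow box, then restrict it to a leaf and compare it with the leafwise Poincar\'e metric. By the construction \eqref{e:f_epsilon} of the global weight $f_\epsilon$ of $g_\epsilon$ (with respect to $g_0$) together with Lemma~\ref{L:trans_metric_near_sing}(3), one has $g_\epsilon=g^\perp_X\cdot\big(|z|^2+|\lambda w|^2\big)\big/\big(|z|^2+|\lambda w|^2+\epsilon^2\big)$ on $\D^2$; combining this with the curvature identity \eqref{e:curvature_Nor} for $(\Nor(\Fc),g^\perp_X)$ gives, on all of $\D^2$,
\[
c_1(\Nor(\Fc),g_\epsilon)=\tfrac12\,\ddczw\log\big(|z|^2+|\lambda w|^2+\epsilon^2\big),
\]
which is now a genuine smooth $(1,1)$-form, its local potential being bounded below by $\tfrac12\log\epsilon^2$.

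Next I would restrict this form to the leaf through $x=(z,w)$ and pull it back by the parametrization $\psi_x(\zeta)=(ze^{i\zeta},we^{i\lambda\zeta})$ of \eqref{e:leaf_equation}. Setting $A(\zeta):=|ze^{i\zeta}|^2$ and $B(\zeta):=|\lambda we^{i\lambda\zeta}|^2$ and carrying out the $\partial\overline\partial$-computation exactly as in the proof of Lemma~\ref{L:laplacian_hol}(1)---the sole change being the extra summand $\epsilon^2$ inside the logarithm---one obtains
\[
\psi_x^*\big(\ddczw\log(|z|^2+|\lambda w|^2+\epsilon^2)\big)(\zeta)=\frac1\pi\cdot\frac{|\lambda-1|^2A(\zeta)B(\zeta)+\epsilon^2\big(A(\zeta)+|\lambda|^2B(\zeta)\big)}{\big(A(\zeta)+B(\zeta)+\epsilon^2\big)^2}\;i\,d\zeta\wedge d\bar\zeta .
\]
Evaluating at $\zeta=0$, where $\psi_x(0)=x$, $A(0)=|z|^2$ and $B(0)=|\lambda w|^2$, produces the numerator defining $\kappa_\epsilon(x)$; in particular it is non-negative, so $\kappa_\epsilon\le0$.

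Then, using the defining relation \eqref{e:kappa_epsilon} of $\kappa_\epsilon$ and the two-sided bound of Lemma~\ref{L:Poincare}(2) applied at $\zeta=0$ (so that $\psi_x^*g_P(0)\approx\big(\lof\|(z,w)\|\big)^{-2}\,i\,d\zeta\wedge d\bar\zeta$), I get
\[
\kappa_\epsilon(x)\approx-\big(\lof\|(z,w)\|\big)^2\cdot\frac{|\lambda-1|^2|z|^2|\lambda w|^2+\epsilon^2\big(|z|^2+|\lambda|^2|\lambda w|^2\big)}{\big(|z|^2+|\lambda w|^2+\epsilon^2\big)^2}.
\]
Since $\lambda\in\C\setminus\R$ is a fixed nonzero number, $|z|^2+|\lambda w|^2+\epsilon^2\approx|z|^2+|w|^2+\epsilon^2$, $|z|^2|\lambda w|^2\approx|z|^2|w|^2$ and $|z|^2+|\lambda|^2|\lambda w|^2\lesssim|z|^2+|w|^2$. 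Dropping the non-negative $\epsilon^2$-summand in the numerator already gives the claimed upper bound, the surviving term being negative and comparable to $-\frac{|z|^2|w|^2}{(|z|^2+|w|^2+\epsilon^2)^2}\big(\lof\|(z,w)\|\big)^2$; for the lower bound one retains the full numerator, estimates it by a constant multiple of $|z|^2|w|^2+\epsilon^2(|z|^2+|w|^2)$, and absorbs the $\epsilon^2$-contribution into the term $c(|z|^2+|w|^2)\big(\lof\|(z,w)\|\big)^2$ allowed on the right-hand side.

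The genuinely new point, and the step I expect to be the main obstacle, is to carry this last estimate out \emph{uniformly in} $\epsilon$: the correction $\epsilon^2\big(A+|\lambda|^2B\big)$ has no counterpart in Lemma~\ref{L:laplacian_hol}, and it is precisely the choice of the additive $\epsilon^2$-regularization---rather than a cut-off---that keeps it under control and is what forces the benign polynomial term $(|z|^2+|w|^2)\big(\lof\|(z,w)\|\big)^2$ to appear on both sides of the inequality.
\endproof
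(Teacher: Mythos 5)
You follow the same route as the paper: identify $c_1(\Nor(\Fc),g_\epsilon)$ in the local model with $\tfrac{1}{2}\ddczw\log(|z|^2+|\lambda w|^2+\epsilon^2)$ (up to a smooth form independent of $\epsilon$, which under the Euclidean normalization you even eliminate exactly), pull back to the leaf by $\psi_x$, and divide by $g_P$ using Lemma \ref{L:Poincare}(2) and \eqref{e:dpsi_x}. Your $\partial\overline\partial$-computation is correct, and in fact more precise than the paper's own display: the numerator really is $|\lambda-1|^2AB+\epsilon^2\big(A+|\lambda|^2B\big)$, whereas the paper's proof writes only the $AB$-term. The upper bound then follows as you say, since dropping the nonnegative $\epsilon^2$-summand only weakens the negativity.

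The genuine gap is the final absorption for the lower bound, exactly the step you flag as the main obstacle and then assert without proof. You would need, with a constant independent of $\epsilon$,
\begin{equation*}
\frac{\epsilon^2\big(|z|^2+|\lambda|^2|\lambda w|^2\big)}{\big(|z|^2+|\lambda w|^2+\epsilon^2\big)^2}\;\leq\; C\,\big(|z|^2+|w|^2\big),
\end{equation*}
and this is false: at $w=0$, $|z|=\epsilon$ the left-hand side is comparable to $1/4$ while the right-hand side is $C\epsilon^2$. More generally, whenever $\|(z,w)\|^2+\epsilon^2\lesssim\epsilon$ the $\epsilon^2$-term contributes to $|\kappa_\epsilon(x)|$ an amount comparable to $(\lof\|(z,w)\|)^2$, not to $\|(z,w)\|^2(\lof\|(z,w)\|)^2$, so the additive regularization does not tame it in the way you claim, and no choice of $c$ makes your last inequality hold for all small $\epsilon$. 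What the extra term does satisfy uniformly is $\epsilon^2\big(A+|\lambda|^2B\big)/(A+B+\epsilon^2)^2\leq\max(1,|\lambda|^2)$, which only yields the weaker bound $|\kappa_\epsilon(x)|\leq c\big(\tfrac{|z|^2|w|^2}{(|z|^2+|w|^2+\epsilon^2)^2}+1\big)(\lof\|(z,w)\|)^2$; that suffices for the dominated-convergence argument proving \eqref{e:inside}, since $(\lof\dist(\cdot,E))^2\lesssim W\in L^1(\mu)$ by Proposition \ref{P:Lyapunov_vs_kappa}, but it is not the estimate stated in the lemma. Your careful computation in fact exposes that the paper's displayed pull-back silently discards precisely this term, so the difficulty is real and cannot be closed by the argument you sketch.
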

\proof
Since  $g_\epsilon=g_0\exp{(-2f_\epsilon)}$ we get  that
$$
c_1(\Nor(\Fc),g_\epsilon)=c_1(\Nor(\Fc),g_0)+\ddc f_\epsilon=\ddc f_\epsilon+\text{a smooth $(1,1)$-form independent of $\epsilon$}.
$$
This,  together  with  \eqref{e:f_smoothness}, \eqref{e:f_epsilon} and \eqref{e:kappa_epsilon}, imply that
\begin{equation*}
 \kappa_\epsilon(x)g_P(x)= -{1\over 2} \ddc\log{(|z|^2+|\lambda w|^2+\epsilon^2)}(x)|_{ L_x}+\text{a smooth $(1,1)$-form independent of $\epsilon$}.
\end{equation*}
 Using the parametrization \eqref{e:leaf_equation} the pull-back of the first term of  the right-hand side  by $\psi_x$ is 
\begin{equation*}
 -{1\over 2}\ddc\log{(|ze^{i\zeta}|^2+|\lambda we^{i\lambda\zeta}|^2+\epsilon^2)}(0),
\end{equation*}
whereas  the pull-back of the second term of  the right-hand side  by $\psi_x$ is   $O(|z|^2+|w|^2) d\zeta\wedge d\bar\zeta.$ 
A straightforward computation as in the end of  the proof of Lemma \ref{L:laplacian_hol} shows that the former  expression is  equal to
\begin{equation*}
-{|\lambda-1|^2\over 4\pi}  {|z|^2|w|^2\over (|z|^2+|\lambda w|^2+\epsilon^2)^2} id\zeta\wedge d\bar\zeta.
\end{equation*}  
Using this and  \eqref{e:dpsi_x}, and applying Lemma \ref{L:Poincare}, the result follows.
\endproof

We resume the proof of  \eqref{e:inside}. By Lemmas \ref{L:laplacian_hol} and \ref{L:laplacian_hol_bis}, there is  a  constant $c>1$ such that for $(z,w)\in\U_a\simeq\D^2$ that
$$
|\kappa_\epsilon(z,w)|\leq c \big({|z|^2|w|^2\over (|z|^2+|w|^2+\epsilon^2)^2} +(|z|^2+|w|^2)\big) (\lof {\|(z,w)\|})^2\leq c^2 |\kappa(z,w)|+c^2.$$
Recall from Proposition \ref{P:Lyapunov_vs_kappa} that  $\int_{\U_a}|\kappa(x)|d\mu(x)<\infty.$ 
On the other hand,  $\kappa_\epsilon$ converge pointwise to $\kappa$ $\mu$-almost everywhere as $\epsilon\to 0.$
Consequently, by Lebesgue dominated convergence, 
$$
\lim\limits_{\epsilon\to 0}\int_{\U_a} \kappa_\epsilon d\mu  =\int_{\U_a} \kappa d\mu.
$$
This and  \eqref{e:kappa_epsilon} imply   \eqref{e:inside}.
The proof of Step 1 is thereby completed.

 \noindent{\bf Step 2:} {\it The general case} (two proofs).
 
 Consider a  general ambient  Hermitian  metric $\hat g_X$ on $X.$
 We keep  the notations introduced in Step 1.  Let  $\chi(T)$  (resp. $\hat\chi(T)$ be the Lyapunov exponent of $T$ when we  use the ambient metric  $g_X$ (resp. $\hat g_X$).
 We  only need to show  that     $\chi(T)=\hat\chi(T).$  There is  a constant $c>1$ such that
 $$c^{-1}g_X\leq  \hat g_X\leq c g_X.$$
 Consequently, we infer from \eqref{e:hol_cocycle_dist} that 
  $$   c^{-2}\| \mathcal H(\omega,t) \|_{g_X} \leq \| \mathcal H(\omega,t) \|_{\hat g_X}\leq  c^2 \| \mathcal H(\omega,t) \|_{g_X}$$  
 for    every $x\in X\setminus E$  and  every path  $\omega\in\Omega_x$  and every time  $t\in\R^+. $
 Hence,
 $$
 \lim\limits_{t\to \infty} \big({1\over  t} \log  \| \mathcal H(\omega,t) \|_{g_X} - {1\over  t} \log  \| \mathcal H(\omega,t) \|_{\hat g_X}\big) =0.
 $$
 By Theorem  \ref{T:VA} (2) applied to $\mu$-almost  every $x\in X\setminus E$  and to    almost every path  $\omega\in\Omega$ with respect to $W_x,$
 we get from  the  above  equality that $\chi(T)=\hat\chi(T).$

 Here is  an alternative proof of Step  2 which is of independent interest.
 By Proposition  \ref{P:Lyapunov_vs_kappa} and Corollary \ref{C:Lyapunov_vs_kappa} and , the  desired equality $\chi(T)=\hat\chi(T)$   amounts  to
 the  equality
 \begin{equation*} 
  \int_X c_1(\Nor(\Fc),g^\perp_X)\wedge T=\int_X c_1(\Nor(\Fc),\hat g^\perp_X)\wedge T.
 \end{equation*}
Consider the global weight function $\hat f:\ X\to [-\infty,\infty)$ satisfying  $\hat g_X^\perp=g_0\exp{(-2\hat f)}.$
So
\begin{equation}\label{e:f-hatf}  c_1(\Nor(\Fc), g^\perp_X)= c_1(\Nor(\Fc),g_0)+\ddc f\quad\text{and}\quad c_1(\Nor(\Fc),\hat g^\perp_X)= c_1(\Nor(\Fc),g_0)+\ddc \hat f.
\end{equation}
The  above  equality is  reduced  to 
\begin{equation}\label{e:Lyapunov_equal_1}
  \int_X  \ddc f\wedge T=\int_X \ddc \hat f\wedge T.
 \end{equation}
 Note  that  both  $f$ and $\hat f$ are smooth  outside $E.$
  Consider the  function $\tilde f:\ X\to [-\infty,\infty]$ given by  $\tilde f=f-\hat f.$
  Fix  $\epsilon_0>0$  small enough. There is  a  constant $c>1$ such that for every $0<\epsilon<\epsilon_0,$ there is   a  smooth  function $\theta_{\epsilon}:\ X\to [0,1]$
  such that  
   \begin{equation}\label{e:theta}\theta_{\epsilon} (x)=0\  \ \text{for} \ \ \dist(x,E)<\epsilon/2,\quad  \theta_{\epsilon} (x)=1 \ \  \text{for}\  \  \dist(x,E)>\epsilon\quad
   \text{and}\ \   |d\theta_{\epsilon}|\leq  c\epsilon^{-1},\quad
  |\ddc\theta_{\epsilon}|\leq  c\epsilon^{-2}.
  \end{equation}
  Fix  $0<\epsilon<\epsilon_0.$ Since $T$ is  $\ddc$-closed and both $f$ and $\hat f$ are smooth on a neighborhood of the support of $\theta_\epsilon$, we have that 
  $$
  \int_X  \ddc (\theta_\epsilon \tilde f)\wedge T=0.
  $$
   By Proposition  \ref{P:Lyapunov_vs_kappa} and Corollary \ref{C:Lyapunov_vs_kappa} and \eqref{e:f-hatf}, 
   we  get that
   $$
   \lim\limits_{\epsilon\to 0} \int_X  \theta_\epsilon \ddc\tilde f \wedge T=\hat\chi(T)-\chi(T).
   $$
   Therefore, equality \eqref{e:Lyapunov_equal_1} is  reduced to showing that each of the following  terms
  \begin{equation}\label{e:Lyapunov_equal_2}
  \int_X  d \tilde f\wedge \dc \theta_\epsilon\wedge  T,\quad  \int_X  \dc \tilde f\wedge d \theta_\epsilon\wedge  T,\quad \int_X  \hat f\ddc\theta_\epsilon \wedge T.
 \end{equation}
 tends to $0$ as $\epsilon$ tends to $0.$ 
 
 By Lemma \ref{L:trans_metric_near_sing} (1) and (4)  applied to $g_X$ and $\hat g_X,$  we get  a constant $c>0$ such that 
$$ |\tilde f(x)|\leq c \quad\text{and}\quad |  d\tilde f(x)| \leq c (\dist(x,E))^{-1}g_X(x)\quad\text{for}\quad  x\in X\setminus E.
$$
Using this and the  properties \eqref{e:theta} of the functions $\theta_\epsilon,$    \eqref{e:Lyapunov_equal_2} is  reduced to showing that
$$
\lim_{\epsilon\to 0}\epsilon^{-2} \int_{x\in X:\ \epsilon/2<\dist(x,a)<\epsilon} T\wedge g_X=0\qquad\text{for all}\qquad  a\in E.
$$
But this inequality is  equivalent to the  vanishing  of the Lelong number of $T$ at $a$ which has been established in \cite{NguyenVietAnh18a}.
Hence, the general case is completed.
\endproof

\begin{remark}\rm
There is an alternative  proof of Step 1 of  Proposition \ref{P:Lyapunov_vs_kappa}  which  is  based on the regularization as in the proof of the first identity of Theorem A
and a  cohomological argument.  This new method uses  the monotone convergence theorem instead of the Lebesgue dominated convergence. However,  it still relies on  Theorem \ref{T:VA_bis}.
 
\end{remark}

\section{Cohomological formula for the Poincar\'e mass}\label{S:Mass}
In this  section we   prove  the last identity  \eqref{e:coho_mass} of Theorem A.
As an application of this theorem, we  compute the Lyapunov exponent  of a generic
foliation   with degree $d>1$ in $\P^2$ (Corollary C).

First, we keep the hypotheses and notations  of Theorem  \ref{T:VA}.
 Let $\Fc$  be given by  an open covering $\{\U_j\}$ of $X$ and 
holomorphic vector fields $v_j\in H^0(\U_j,\Tan(X))$ with isolated singularities  satisfying  \eqref{e:Cotan} 
for some non-vanishing holomorphic functions $g_{jk}\in H^0(\U_j\cap U_k, \Oc^*_X).$
Recall   that  $\Cotan(\Fc)$  is  the  holomorphic line  bundle on $X$  associated  to   the  multiplicative cocycle  $g_{jk}.$  
Consider the singular Hermitian metric $h$ on $\Tan(\Fc)$  defined by
\begin{equation}\label{e:metric-h}
 h(v_j(x),v_j(x)):=  \|v_j(x)\|^2_P,
\end{equation}
where  on the  right-hand side  $\|v_j(x)\|_P$ denotes the  length of the tangent vector $v_j(x)$ measured with respect to  the leafwise Poincar\'e metric $g_P,$
that is, using \eqref{e:covering_map} and \eqref{e:extremum} and \eqref{e:tran_metric},
\begin{equation}\label{e:change-metrics}
  \|v_j(x)\|_P=|\phi^*_x(v_j(x))|={ \|v_j(x)\|_{g_X}\over \|d\phi_x(0)\|}={ \|v_j(x)\|_{g_X}\over \eta(x)}.
\end{equation}
Let $h^*$  be the  dual  metric of $h$ on  $\Cotan(\Fc).$
By Brunella \cite[Theorem 1.1]{Brunella03},  
the   curvature $c_1(\Cotan(\Fc),h^*)$ is a positive closed  current on $X.$ 
Fix  a  smooth  Hermitian metric $h_0$ on $\Tan(\Fc).$
Let $h_0^*$ be the  dual  metric of $h_0$  on  $\Cotan(\Fc).$
Hence, there is  a upper-semi continuous function $\psi:\ X\to [-\infty,\infty)$ such that $h^*=e^{-2\psi}h^*_0.$
So  the  above   result of Brunella  says that  
\begin{equation}\label{e:Brunella}
c_1(\Cotan(\Fc),h^*)=\ddc \psi+c_1(\Cotan(\Fc),h^*_0)\geq 0.
\end{equation}

The  following result is  needed.

\begin{lemma}\label{L:Cotan}
\begin{enumerate}
 \item 
We have 
  $
c_1(\Cotan(\Fc),h^*)(x)|_{L_x}=g_P(x)$  for  $x\in X\setminus E.$
\item $\psi$ is  smooth  function outside $E,$ and   
$$\psi(x)=\lof\lof \dist(x,E)+O(1)\qquad \text{for}\qquad  x\in X\setminus E.$$
\end{enumerate}
\end{lemma}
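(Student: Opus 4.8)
\proof[Proof proposal]
The plan is to trivialize $\Cotan(\Fc)$ by the coframes dual to the generating fields $v_j$, write $h^{*}$ there, and reduce both parts to a curvature computation on leaves together with Lemma~\ref{L:Poincare}. Over $\U_j\setminus E$ the field $v_j$ is a nowhere vanishing section of $\Tan(\Fc)$, so the dual $v_j^{*}$ trivializes $\Cotan(\Fc)$ there; by \eqref{e:metric-h} and duality $|v_j^{*}|^{2}_{h^{*}}=h(v_j,v_j)^{-1}=\|v_j\|_P^{-2}$, so the local weight of $h^{*}$ relative to $v_j^{*}$ is $\log\|v_j\|_P$ and
$$
c_1(\Cotan(\Fc),h^{*})\big|_{\U_j\setminus E}=\ddc\log\|v_j\|_P .
$$
Since $\|v_j\|_P/\|v_k\|_P=|g_{jk}|$ has vanishing $\ddc$, the right-hand side glues to a closed $(1,1)$-form on $X\setminus E$, and Brunella's theorem \eqref{e:Brunella} is exactly the assertion that it extends to a positive closed current on $X$.

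For part~(1), fix $x\in X\setminus E$, restrict the displayed identity to $L_x$, and pull it back by the universal covering $\phi_x\colon\D\to L_x$. Writing $\phi_x^{*}v_j=b_j\,\partial/\partial\zeta$ with $b_j$ holomorphic and nowhere zero (because $v_j$ has no zero on $L_x\subset X\setminus E$), and letting $\rho_P$ be the conformal density of the Poincar\'e metric on $\D$, one has $\log(\|v_j\|_P\circ\phi_x)=\log|b_j|+\tfrac12\log\rho_P$. Applying $\ddc$ annihilates the pluriharmonic term $\log|b_j|$ and leaves $\tfrac12\ddc\log\rho_P$, the curvature form of the Poincar\'e metric, which by its constant negative curvature equals $g_P$. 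As $\phi_x^{*}g_P=g_P$ on $\D$ by definition of the leafwise Poincar\'e metric and $\phi_x$ is a covering, this descends to $c_1(\Cotan(\Fc),h^{*})|_{L_x}=g_P$, which is part~(1) (the pointwise leafwise form of Brunella's identity).

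For part~(2), comparing $h^{*}=e^{-2\psi}h_0^{*}$ in the dual coframes gives, via \eqref{e:change-metrics}, the formula
$$
\psi(x)=\log\|v_j(x)\|_P-\tfrac12\log h_0(v_j(x),v_j(x))=\log\|v_j(x)\|_{g_X}-\log\eta(x)-\tfrac12\log h_0(v_j(x),v_j(x)),
$$
which is independent of the chosen local generator. Outside $E$ the field $v_j$ is holomorphic and nowhere zero, $h_0$ is smooth, and the leafwise Poincar\'e metric of $\Fc$ varies smoothly in the transverse direction (hence $\eta$ is smooth on $X\setminus E$; see \cite{DinhNguyenSibony14b}), so $\psi$ is smooth on $X\setminus E$. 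For the asymptotics, work in a local model $\U_a\simeq\D^2$ of \eqref{e:Z}: there $\Fc$ is generated, up to a nowhere vanishing holomorphic factor, by $Z=z\,\partial/\partial z+\lambda w\,\partial/\partial w$, and $Z$ is actually a local frame of $\Tan(\Fc)$ on $\U_a$ — the sheaf of holomorphic vector fields proportional to $Z$ near the origin is generated by $Z$, since $z,w$ are coprime. Hence $h_0(v_j,v_j)\approx1$ and $\|v_j\|_{g_X}\approx\sqrt{|z|^{2}+|\lambda w|^{2}}\approx\dist(x,E)$ on $\U_a$. Inserting these together with the two-sided bound $\eta(x)\approx\dist(x,E)\,\lof\dist(x,E)$ of Lemma~\ref{L:Poincare}(1) into the formula for $\psi$, and using $\lof\lof t=1+\log\lof t$, yields the asserted logarithmic asymptotics $\psi(x)=\lof\lof\dist(x,E)+O(1)$ on $\U_a$; combined with the boundedness of $\psi$ away from $E$ this proves part~(2).

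Apart from the elementary curvature calculus of part~(1), the delicate points are local near $E$: one needs the \emph{sharp} two-sided control of $\eta$ from Lemma~\ref{L:Poincare}(1) — a cruder estimate would not pin down the $\lof\lof$-rate — and one must observe that at a hyperbolic singularity the linear model field $Z$ is a genuine local frame of $\Tan(\Fc)$, which is precisely what makes the auxiliary smooth metric $h_0$ enter only through a bounded error term.
\endproof
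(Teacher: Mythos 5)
Your route is essentially the paper's own: for (1) you compute the curvature of $h^*$ in the frame dual to a local generator $v_j$, pull back by $\phi_x$, and note that the holomorphic factor is killed by $\ddc$ so only the Poincar\'e density survives (the paper does exactly this, writing $\|v(\phi_x(\zeta))\|_P=\|v\|_{g_X}/(\|d\phi_x(\zeta)\|(1-|\zeta|^2))$ and applying $\ddc$); for (2) you express $\psi$ through the ratio of the Poincar\'e and $h_0$-norms of a local generator, identify that generator with a unit multiple of $Z$ in the local model, and invoke $\eta\approx\dist(\cdot,E)\,\lof\dist(\cdot,E)$ from Lemma \ref{L:Poincare}, which is also the paper's argument (phrased there via the induced metric $g_X^\parallel$ on $\Tan(\Fc)$ and its weight $f$).

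Two points, both in part (2), need attention. First, your smoothness claim is justified by asserting that the leafwise Poincar\'e metric (equivalently $\eta$) varies \emph{smoothly} in the transverse direction, citing \cite{DinhNguyenSibony14b}; that is not what is available — only transversal (H\"older) continuity is known in this setting, and the paper itself later describes $g_P$ as ``leafwise smooth and transversally continuous''. So your argument establishes continuity of $\psi$ on $X\setminus E$, not smoothness; the paper's proof is silent on this regularity claim, and continuity is all that is used afterwards, but as written your justification rests on a false statement. Second, a sign check: with $h^*=e^{-2\psi}h_0^*$ one indeed has $\|v_j\|_P=e^{\psi}\|v_j\|_{h_0}$, which is your displayed formula; inserting $\|v_j\|_{g_X}\approx\dist(x,E)$, $h_0(v_j,v_j)\approx1$ and $\eta\approx\dist(x,E)\lof\dist(x,E)$ gives
$$\psi(x)=-\log\lof\dist(x,E)+O(1),$$
i.e. $\psi\to-\infty$ at $E$ — which is what is consistent with $\psi$ being quasi-psh (u.s.c., locally bounded above) and with the normalization $\psi\le-1$ made immediately after the lemma — whereas your final sentence asserts the stated $+\lof\lof$ form, which does not follow from your own display. (The paper's proof contains the matching slip $\|v_j\|_P=\|v_j\|_{h_0}e^{-\psi}$; what is actually used downstream, e.g. in \eqref{e:bound_psi_n}, is only $|\psi(x)|\le\lof\lof\dist(x,E)+O(1)$, and that your computation does deliver.) You should either correct the sign of the conclusion or state it for $|\psi|$, rather than silently matching the lemma as printed.
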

\proof
\noindent {\bf Proof of assertion (1).}
Let $\U=\U_j$ be  a  flow box containing $x.$  Let $0<r<1$  be so small such that $u$  is well-defined on $\phi_x(\D_r)\subset \U.$ 
Let $v:=v_j$  be the  vector  field associated with $\U.$
Let $\zeta\in\D_r$  and  write $y:=\phi_x(\zeta).$  Since $\phi_y$  is  the compose
of the disc-automorphism $\D\ni z\mapsto  {z+\zeta\over  1+z\bar\zeta}$   and $\phi_x,$ we infer  from  \eqref{e:change-metrics}
that
\begin{equation*}
  \|v(y)\|_P=|\phi^*_y(v(y))|={ \|v(y)\|_{g_X}\over \|d\phi_x(\zeta)\| (1-|\zeta|^2)}.
\end{equation*}
Observe that  as  $v$ is  a holomorphic vector field,  ${ \|v(\phi_x(\zeta))\|_{g_X}\over \|d\phi_x(\zeta)\|}$ is  the  modulus of a non vanishing holomorphic function.
Therefore,
$$
\ddc\log \|v(\phi_x(\zeta))\|_P=-\ddc \log (1-|\zeta|^2)=g_P(\zeta)\qquad\text{for}\qquad \zeta\in\D_r.
$$
Since the left-hand side is  equal to $$\phi_x^*\big(-c_1(\Tan(\Fc),h)\big)(\zeta)=\phi_x^*\big(c_1(\Cotan(\Fc),h^*)\big)(\zeta),$$
assertion (1) follows.

\noindent {\bf Proof of assertion (2).}
Let $g^\parallel_X$ be the metric on $\Tan(\Fc)$   induced    by $g_X.$
So  there is  a global weight function $f:\ X\to [-\infty,\infty)$ such that $g_X^\parallel=h_0\exp{(-2f)}.$
We know that the weight function $f$ is   smooth outside $E.$ 
 
 Suppose  without loss of generality that $g_X$ coincides with the Euclidean metric in a local model near every singular point $a$ of $\Fc.$
Consider  the local holomorphic  section $e_L$ given by  $(z,w)\mapsto z{\partial\over\partial z}+\lambda w{\partial\over\partial w}$ of $\Tan(\Fc)$  over  $\U_a\simeq\D^2.$  
We  have, for $x=(z,w)\in\D^2\setminus\{(0,0)\}),$
$$
\exp(-\varphi(x))=| e_L(x)|_{g^\parallel_X}=\sqrt{|z|^2+|\lambda w|^2} .
$$
Hence,   for $x=(z,w)\in\D^2\setminus\{(0,0)\},$
\begin{equation*}c_1(\Tan(\Fc), g^\parallel_X)(x)=\ddc\varphi(x)=-\ddczw\log{\sqrt{|z|^2+|\lambda w|^2}}.
\end{equation*}
 Moreover,  in the local model with coordinates $(z,w)$ associated to the singular point
$E\ni a\simeq (0,0)\in\D^2,$ we get from the last equalities that
\begin{equation*}
c_1(\Nor(\Fc),h_0)+\ddc f= c_1(\Tan(\Fc), g^\parallel_X)(x)=\ddc\varphi(x)=-{1\over 2}\ddczw\log{(|z|^2+|\lambda w|^2)}.
\end{equation*}
Consequently, it follows that
for  $x=(z,w)\in \U_a\simeq \D^2,$
\begin{equation*}f(x)=-{1\over 2}\log{(|z|^2+|\lambda w|^2 ) }+\text{a smooth function in}\ x.
\end{equation*}
So, there is a constant $c>1$ such that  for every local holomorphic  section  $v$ of $\Tan(\Fc)$ over an open set $\U,$  we have  
$$
c^{-1}\|v(x)\|_{g_X}\leq \|v(x)\|_{h_0} \dist(x,E)\leq \|v(x)\|_{g_X} \qquad \text{for}\qquad  x\in\U.
$$
On the other hand, we  deduce  from  \eqref{e:metric-h} and  \eqref{e:relation_Poincare_Hermitian_metrics} that
$$
\|v_j(x)\|_{g_X}=\|v_j(x)\|_P \eta(x)=\|v_j(x)\|_{h_0} e^{-\psi(x)}\eta(x)\quad\text{for}\quad x\in\U_j.
$$
This,  coupled with the last  inequalities, yields  that  
$$
 c^{-1}\dist(x,E)\leq e^{-\psi(x)}\eta(x)\leq c \dist(x,E)\quad\text{for}\quad x\in X.
$$ 
By Lemma \ref{L:Poincare},  $\eta(x)\approx  \dist(x,E) \lof \dist(x,E).$  
Hence,  we  get the  conclusion of assertion (2).
\endproof

We infer from Lemma  \ref{L:Cotan} and  \eqref{e:mu}  that
\begin{equation}\label{e:mu_bis}
  c_1(\Cotan(\Fc),h^*)\wedge T=\mu \qquad\text{on}\qquad X\setminus E.
\end{equation}


Multiplying  $ h^*_0$ by a positive  constant,
we may assume  without loss of generality that $\psi\leq -1.$
\begin{lemma}\label{L:BlockiKolodziej}
 There exist  a decreasing  sequence of positive real numbers  $(\epsilon_n)$  and a decresing sequence of smooth negative quasi-psh functions $\psi_n$   
with the following properties:
 $\epsilon_n\searrow 0,$ and  $\psi_n\searrow\psi$ on $X,$ and $\psi_n$ converge to $\psi$ locally uniformly on $X\setminus E$   as $n\to\infty,$ and  
\begin{equation*}
\ddc \psi_n+c_1(\Cotan(\Fc),h^*_0) +\epsilon_n  g_X\geq 0\quad\text{in the current sense on $X$, for every $n$}.
\end{equation*}
\end{lemma}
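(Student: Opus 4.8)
The plan is to recognize $\psi$ as a genuine quasi-plurisubharmonic function on the compact K\"ahler manifold $X$ and then to feed it into the regularization theorem of \cite{BlockiKolodziej}; the only extra work beyond invoking that result is a normalization making the approximants negative and a Dini-type argument upgrading monotone convergence to local uniform convergence off $E$.

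First I would check that $\psi$ is $\omega_0$-psh, where $\omega_0:=c_1(\Cotan(\Fc),h_0^*)$ is a smooth closed real $(1,1)$-form, smooth because $h_0^*$ is a smooth Hermitian metric on $\Cotan(\Fc)$. Since $h^*=e^{-2\psi}h_0^*$, the change-of-weight formula recalled in Subsection \ref{SS:Chern} gives
\begin{equation*}
c_1(\Cotan(\Fc),h^*)=\ddc\psi+\omega_0,
\end{equation*}
and by Brunella's positivity \eqref{e:Brunella} the left-hand side is a positive $(1,1)$-current; hence $\ddc\psi+\omega_0\geq 0$ in the current sense. Moreover $\psi$ is upper semicontinuous by construction, it is smooth, in particular finite, on $X\setminus E$ by Lemma \ref{L:Cotan}(2), so $\psi\not\equiv-\infty$, and we have normalized $\psi\leq -1$, so $\psi$ is bounded above. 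Hence $\psi$ is a bona fide quasi-psh function on $X$ with reference form $\omega_0$.

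Next I would apply \cite{BlockiKolodziej} to the compact Hermitian manifold $(X,g_X)$, which is even K\"ahler here since $X$ is projective: the $\omega_0$-psh function $\psi$ is the pointwise decreasing limit of smooth functions $\tilde\psi_n$ for which there is a decreasing sequence $\epsilon_n\searrow 0$ with
\begin{equation*}
\ddc\tilde\psi_n+\omega_0+\epsilon_n g_X\geq 0\qquad\text{on }X.
\end{equation*}
To arrange negativity, note that the sets $\{\tilde\psi_n\geq-1/2\}$ form a decreasing sequence of compact subsets of $X$ whose intersection is contained in $\{\psi\geq-1/2\}=\varnothing$ (as $\psi\leq-1$); by the finite intersection property some $\{\tilde\psi_N\geq-1/2\}$ is empty, i.e. $\tilde\psi_N<-1/2<0$ on all of $X$, and then $\tilde\psi_n\leq\tilde\psi_N<0$ for $n\geq N$. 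Discarding the first $N-1$ terms of both sequences and reindexing, we obtain smooth negative functions $\psi_n$ and positive reals $\epsilon_n$ with $\psi_n\searrow\psi$ on $X$, $\epsilon_n\searrow 0$, and $\ddc\psi_n+c_1(\Cotan(\Fc),h_0^*)+\epsilon_n g_X\geq 0$ in the current sense on $X$; in particular each $\psi_n$ is quasi-psh. Finally, since $\psi$ is continuous on $X\setminus E$ and the $\psi_n$ are continuous with $\psi_n\searrow\psi$, Dini's theorem gives $\psi_n\to\psi$ uniformly on every compact subset of $X\setminus E$, i.e. locally uniformly on $X\setminus E$, which is all that remains to be shown.

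The conceptual content of the statement is carried entirely by the cited regularization theorem, so there is no serious obstacle; the points deserving care are only the verification that $\psi$ is an honest quasi-psh function --- which rests on Brunella's theorem \eqref{e:Brunella} --- and the truncation that renders the approximants negative without spoiling the curvature inequality.
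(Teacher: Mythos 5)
Your reductions around the citation (Brunella's positivity \eqref{e:Brunella} showing that $\psi$ is $\omega_0$-psh with $\omega_0:=c_1(\Cotan(\Fc),h^*_0)$, the truncation making the approximants negative, and Dini's theorem for local uniform convergence on $X\setminus E$) are all correct, and the route you choose --- quoting \cite{BlockiKolodziej} --- is the first option the paper itself mentions. But there is a genuine gap at the central step: the regularization statement you attribute to \cite{BlockiKolodziej} is not true in the generality in which you use it. For an arbitrary smooth \emph{closed} real $(1,1)$-form $\omega_0$ on a compact manifold and an arbitrary $\omega_0$-psh function $\psi$, a decreasing approximation by smooth $\psi_n$ with $\ddc\psi_n+\omega_0+\epsilon_n g_X\geq 0$, $\epsilon_n\searrow 0$, cannot exist in general: it would force the class $\{\omega_0\}=c_1(\Cotan(\Fc))$ to be nef. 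Indeed, integrating the semi-positive smooth form $\omega_0+\ddc\psi_n+\epsilon_n g_X$ over curves gives $\{\omega_0\}\cdot C\geq -\epsilon_n\{g_X\}\cdot C$ for every curve $C$; taking $\psi$ to be the potential of the integration current on a $(-1)$-curve (a pseudoeffective, non-nef class) shows such approximants need not exist. The obstruction is exactly measured by the Lelong numbers of $\psi$, and this is why the loss of positivity in the Demailly/B{\l}ocki--Ko{\l}odziej circle of results is, in general, controlled by Lelong numbers rather than being arbitrarily small; the unconditional ``small loss'' versions require the reference form to be positive (hermitian/K\"ahler), which $\omega_0$ is not.

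So your proof is missing precisely the input that makes the conclusion true here and that the paper records before invoking the citation: by Lemma \ref{L:Cotan}(2), $\psi$ has only $\log\lof\dist(\cdot,E)$-type singularities at the finite set $E$, hence its Lelong numbers vanish everywhere, and the regularization with arbitrarily small loss becomes legitimate. You never verify (or use) this; Lemma \ref{L:Cotan}(2) appears in your argument only to guarantee $\psi\not\equiv-\infty$ and continuity off $E$. The paper, after making the Lelong-number remark, in fact bypasses the citation altogether: since the unbounded locus of $\psi$ is the finite set $E$, it covers $X$ by the flow boxes $\U_p$, convolves the local psh functions $\psi+f_p$ (where $0\leq\ddc f_p-\omega_0\leq\epsilon g_X$), and glues by a regularized maximum with cut-offs $\chi_p$; the gluing costs only $O(\epsilon)g_X$ of positivity precisely because $\psi$ is continuous on all overlaps $\U_p\cap\U_q$ (these avoid $E$), so the convolutions converge locally uniformly there. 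To repair your write-up, either add the vanishing of the Lelong numbers and invoke the regularization theorem in the form where the loss is governed by them, or reproduce this direct gluing argument; the rest of your proposal can then stand as written.
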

\proof
Lemma  \ref{L:Cotan}  says in particular that   the Lelong number of $\psi$ vanishes  everywhere. 
Using this
 we may perform B{\l}ocki--Ko{\l}odziej's regularization \cite[Theorem 2]{BlockiKolodziej} to obtain the conclusion of the lemma. However, since
 the unbounded locus\footnote{The unbounded locus of $\psi$ is  by definition the  set of  all points $a\in X$
such that $\psi$ is  unbounded on every neighborhood  of $a.$} of $\psi$ is  exactly  $E$  which is  set,
we prefer  to  give here   a simpler direct argument  for the  reader's convenience.

Fix  a finite cover $\Vc:=(\V_p)_{p\in I}$  of  (singular and  regular) flow boxes of $X$ such that $V_p\Subset \U_p,$ $p\in I,$
where  $\Uc:=(\U_p)_{p\in I}$ is  the cover  introduced in Subsection \ref{SS:local_model}. Fix an $\epsilon>0.$ We  choose  a smooth function $f_p$ in $\U_p$
such that
\begin{equation}\label{e:ddcf_p}
 0\leq \ddc f_p-c_1(\Cotan(\Fc),h^*_0)\leq \epsilon g_X \qquad\mathrm{in}\qquad \U_p.
\end{equation}
Then the function $\varphi_p:=\psi+f_p$ is plurisubharmonic in $\U_p.$
Every  flow box  $\U_p,$ $p\in I,$ can be  regarded as a bidisc $\D^2.$
we fix a regularization  by convolution for functions defined on $\U_p$ as  follows.
Fix a smooth radial function $\rho:\  \C^2\to  [0,\infty)$ with  compact support in $\D^2$   such that  $\int_{\C^2} \rho(x)\Leb(x)=1,$
where $\Leb$ denotes the Lebesgue measure of $\C^2.$  
For a  function $u:\  \U_p\simeq \D^2\to  [-\infty,\infty)$ and  a number $\delta>0,$ we consider the regularized function
$$
u_\delta(x):=(u\star \rho_\delta)(x)=\int_{\C^2}  u(x-\delta y)\rho(y) \Leb (y)\quad\text{for}\quad  x\in \U_p\simeq \D^2.
$$
Here, $\rho_\delta(x):=\delta^{-4} \rho(x/\delta),$ and on the right hand side the function $u$  is  extended to $\C^2$  by setting it equal to $0$ outside $\D^2.$
So  if $u$ is plurisubharmonic, then  so  is  $u_\delta$  and $u_\delta$ decrease to $u$ as $\delta\searrow 0.$  Moreover, if $u$ is continuous
then    $u_\delta$  converge  to $u$ locally  uniformly.

Since  for two different (regular or singular) flow boxes $\U_p,$ $\U_q\in\Uc,$ we always have
$(\U_p\cap \U_q)\cap E=\varnothing,$  it follows that
\begin{equation}\label{e:cv-uniform}
\varphi_{p,\delta}-\varphi_{q,\delta}
\quad\text{converge locally uniformly in}\quad \U_p\cap \U_q\quad \text{to}\quad f_p-f_q.
\end{equation}
Let $\chi_p$  be a  smooth function in $\U_p$   such that $\chi_p=0$ on $\V_p$
and  $\chi_p=-1$ away  from a  compact subset of $\U_p.$
So there is a constant $c>0$ such that  $\ddc \chi_p\geq -cg_X.$
For $\delta>0$ consider
$$
\varphi_\delta:=\max\limits_{p\in I}\big(\varphi_{p,\delta}-f_p+\epsilon c^{-1}\chi_p\big).
$$
Therefore, we deduce  from  \eqref{e:cv-uniform} that  for  $\delta>0$ small enough, the values on the set $\{\chi_p=-1\}$  do not contribute to
the  maximum. Hence, $\varphi_\delta$ is  continuous.  If  we consider regularized maximum  (see e.g. \cite{Demailly}) in place of maximum,  we obtain smooth function $\varphi_\delta.$
This, combined  with  \eqref{e:ddcf_p}, implies that
$$
\ddc\varphi_\delta+c_1(\Cotan(\Fc),h^*_0)+ 2\epsilon g_X\geq 0.
$$
Moreover, $\varphi_\delta\searrow \psi$ as $\delta\searrow 0.$
Now for each  $n,$  we  set $\epsilon:=\epsilon_n/2$ and $\psi_n:=\varphi_\delta$  with $\delta>0$  small  enough.
Since $\psi\leq 1,$ $\varphi_n$ can be chosen  to be negative. 
This completes the proof.
\endproof
By Lemma \ref{L:BlockiKolodziej} $\psi_n\leq 0$ and  $\psi_n\searrow\psi$ as $n$ tendsto infinity.
This, coupled with Lemma \ref{L:Cotan} (2), gives  a constant $c$ independent of $n$ such that
\begin{equation}\label{e:bound_psi_n} 
|\psi_n|\leq |\psi|\leq \lof\lof{\dist(x,E)}  +c.
 \end{equation}
For $n\geq 1$  define the  following measure on $X:$
\begin{equation}\label{e:nu_n}
 \nu_n:= \big(\ddc \psi_n+c_1(\Cotan(\Fc),h^*_0)+\epsilon_n  g_X\big)\wedge T.
\end{equation}
By Lemma \ref{L:BlockiKolodziej} $\nu_n$  are all  positive  finite.

\begin{lemma}\label{L:W_bull}
There is  a smooth function  $ W_\bullet:\  X\setminus E\to \R^+$ and  a constant $c>1$ such that
\begin{eqnarray*}
 c^{-1}\lof \lof \dist(x,E))&\leq & W_\bullet(x) \leq  c\lof \lof \dist(x,E)),\\
|d W_\bullet(x)|_{L_x}|&\leq&  c(\dist(x,E))^{-1}(\lof \dist(x,E))^{-1},\\
|\ddc W_\bullet(x)|_{L_x}|&\leq&  c(\dist(x,E))^{-2}(\lof \dist(x,E))^{-2}
 \end{eqnarray*}
 for  all $x\in X\setminus E.$  
\end{lemma}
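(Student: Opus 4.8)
The plan is to build $W_\bullet$ directly out of the Brunella potential $\psi$ of \eqref{e:Brunella}. By Lemma \ref{L:Cotan}(2), $\psi$ is smooth on $X\setminus E$ and satisfies $|\psi(x)|=\lof\lof\dist(x,E)+O(1)$; replacing $\psi$ by $-\psi$ if necessary so that it is bounded below on $X\setminus E$, and then adding a constant $C$, set $W_\bullet:=\psi+C\geq 1$ on $X\setminus E$, so that $W_\bullet(x)=\lof\lof\dist(x,E)+O(1)$. The two-sided comparison $c^{-1}\lof\lof\dist(x,E)\leq W_\bullet(x)\leq c\lof\lof\dist(x,E)$ is then immediate: the upper bound follows from $W_\bullet=\lof\lof\dist(\cdot,E)+O(1)$ and $\lof\lof\dist(\cdot,E)\geq 1$, and the lower bound splits into the region where $\lof\lof\dist$ is large (where $W_\bullet\geq\tfrac12\lof\lof\dist$ by the same asymptotics) and a fixed neighbourhood of $X\setminus E$ away from $E$ (where $W_\bullet\geq1$ and $\lof\lof\dist$ is bounded above and below).

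For the second-order estimate I would use Brunella's formula together with Lemma \ref{L:Cotan}(1). Since $h^{*}=e^{-2\psi}h^{*}_{0}$ we have $\ddc\psi=c_1(\Cotan(\Fc),h^{*})-c_1(\Cotan(\Fc),h^{*}_{0})$, and restricting to a leaf and invoking Lemma \ref{L:Cotan}(1) gives
$$\ddc W_\bullet(x)|_{L_x}=\pm\big(g_P(x)-c_1(\Cotan(\Fc),h^{*}_{0})(x)|_{L_x}\big).$$
Because $h_0$ is smooth, the $g_X$-norm of $c_1(\Cotan(\Fc),h^{*}_{0})|_{L_x}$ is bounded uniformly, while the $g_X$-norm of $g_P|_{L_x}$ equals $\eta(x)^{-2}$ by \eqref{e:relation_Poincare_Hermitian_metrics}, which is $\approx(\dist(x,E)\lof\dist(x,E))^{-2}$ by Lemma \ref{L:Poincare}(1). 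As $(\dist(x,E)\lof\dist(x,E))^{-2}$ is bounded below by a positive constant on $X\setminus E$, the desired bound on $|\ddc W_\bullet(x)|_{L_x}|$ follows.

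The only delicate point is the first-order bound, which — using $g_X|_{L_x}=\eta^{2}g_P$ and $\eta\approx\dist\cdot\lof\dist$ — is equivalent to the assertion that the leafwise Poincar\'e gradient $|d\psi|_P$ is bounded on $X\setminus E$. Here I would pull $\psi$ back by the universal covering $\phi_x\colon\D\to L_x$ and set $u:=\phi_x^{*}\psi$. The second-order computation above shows that $|\Delta_P u|$ is bounded uniformly. On the other hand, Lemma \ref{L:go_deeper} shows that for $\zeta$ in a Poincar\'e ball of fixed radius about $0$ the distance $\dist(\phi_x(\zeta),E)$ lies between a fixed power of $\dist(x,E)$ and a constant; combined with $W_\bullet=\lof\lof\dist(\cdot,E)+O(1)$ this yields $|u(\zeta)-u(0)|\leq c$ on the unit Poincar\'e ball, with $c$ independent of $x$. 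A standard interior gradient estimate for the operator $\Delta_P$ on $(\D,g_P)$ — uniform in the center by homogeneity of the Poincar\'e disc — then bounds $|du|_P(0)=|d\psi|_P(x)$ by a constant, and applying this at every point (after moving it to $0$ by an automorphism) gives $|d\psi|_P\leq c$. Converting back to the ambient metric produces exactly $|dW_\bullet(x)|_{L_x}|\leq c(\dist(x,E))^{-1}(\lof\dist(x,E))^{-1}$.

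I expect this interior gradient estimate to be the main point that needs care, precisely because one must ensure the constant does not degenerate as $x\to E$; homogeneity of $(\D,g_P)$ is what rescues it, but the reduction relies on the uniform control of the oscillation of $u$ on unit balls coming from Lemma \ref{L:go_deeper}. If one prefers to avoid elliptic estimates, an alternative is to use the explicit local expression $f=-\tfrac12\log(|z|^2+|\lambda w|^2)+(\text{smooth})$ from the proof of Lemma \ref{L:Cotan}(2) together with $g_X|_{L_x}=\eta^{2}g_P$: this identifies $\psi$ restricted to a leaf in a singular flow box with a smooth function plus $-\tfrac12\log$ of the leafwise Poincar\'e density, the first summand contributing $O(\eta)$ and the second $O(1)$ in Poincar\'e norm, again by Lemma \ref{L:Poincare}.
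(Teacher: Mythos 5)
Your construction is genuinely different from the paper's: the paper builds $W_\bullet$ by hand in each local model as $\log m(-\log|z|,-\log|w|)$, where $m$ is a smooth ``regularized min'', and checks all three bounds by direct computation in the leaf coordinates $(u,v)$ of \eqref{e:leaf_equation_bis}--\eqref{e:|z|}; no curvature identity or elliptic theory enters. Your idea of taking $W_\bullet=-\psi+C$ with $\psi$ the Brunella weight is attractive because the zeroth-order comparison comes from Lemma \ref{L:Cotan}(2) (modulo the sign: the paper's normalization $\psi\le-1$ and \eqref{e:bound_psi_n} show the intended statement is $|\psi|=\lof\lof\dist(\cdot,E)+O(1)$ with $\psi\to-\infty$, which your replacement $\psi\mapsto-\psi$ handles), and the second-order bound is indeed immediate from \eqref{e:Brunella} and Lemma \ref{L:Cotan}(1): $\ddc W_\bullet|_{L_x}=c_1(\Cotan(\Fc),h_0^*)|_{L_x}-g_P(x)$, whose $g_X$-norm is $O(\eta^{-2})\approx(\dist(x,E)\lof\dist(x,E))^{-2}$, and this dominates the smooth term since $\eta$ is bounded. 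Those two parts are correct and arguably cleaner than the paper's computation.

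The first-order bound, however, is not established as written, and it is exactly the step you flag as delicate. You reduce it to a uniform bound on the oscillation of $u=\phi_x^*\psi$ on unit Poincar\'e balls, and you justify that oscillation bound by Lemma \ref{L:go_deeper}. But that lemma only controls how \emph{deep} the orbit can go: it yields $\lof\lof\dist(\phi_x(\zeta),E)\le\lof\lof\dist(x,E)+O(1)$, i.e.\ one inequality. Your stated control ``between a fixed power of $\dist(x,E)$ and a constant'' gives nothing in the reverse direction: if $x$ is extremely close to $E$ while $\phi_x(\zeta)$ has left the singular box, the difference $|u(\zeta)-u(0)|$ could a priori be of size $\lof\lof\dist(x,E)$, so $|u(\zeta)-u(0)|\le c$ does not follow from what you cite. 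The reverse inequality is in fact true (within Poincar\'e distance $1$ one cannot rise from depth $\dist(x,E)$ to bounded depth, because by Lemma \ref{L:Poincare}(2) such an escape costs Poincar\'e length $\gtrsim\lof\lof\dist(x,E)$), and it can be proved either by integrating the metric comparison of Lemma \ref{L:Poincare}(2) along the geodesic, or by applying Lemma \ref{L:go_deeper} symmetrically from the endpoint $y=\phi_x(\zeta)$ together with the subdivision argument of Step~3 of Proposition \ref{P:expansion_rate} when the geodesic leaves the box; but this must be said. Your fallback argument has the same weak point: Lemma \ref{L:Poincare}(2) is only a zeroth-order two-sided comparison of $g_P$ with $|d\zeta|^2/(\lof\|\psi_x(\zeta)\|)^2$, so it bounds the Poincar\'e gradient of $\log\lof\|\psi_x(\zeta)\|$ but says nothing about the gradient of the discrepancy between the true Poincar\'e density and the model density, which is exactly the term you need to control when you write $\psi$ as ``smooth plus $-\tfrac12\log$ of the Poincar\'e density''. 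Once the two-sided oscillation estimate is supplied, the interior gradient estimate on $(\D,g_P)$ (uniform by homogeneity) does close the argument; alternatively, you could simply adopt the paper's explicit regularized-min construction, which avoids both issues.
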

\proof
Let $\rho:\ \R\to  [0,1]$ be  a  smooth increasing  function   such that  $\rho(t)=0$ for $t\leq 0$  and $\rho(t)=1$ for $t\geq 1.$
Consider the   ``regularized min'' function
$$
m(t,s):= t-\rho(t-s).
$$  
We check easily  that  $m$ is  smooth  and its  first  and second derivatives  are  uniformly bounded  
and 
$$ \min \{t,s\}-1\leq  m(t,s)\leq  \min \{t,s\}+1.$$
Consider   a function $ W_\bullet:\  X\setminus E\to \R^+$ which is  smooth  outside the singular flow boxes $\U_a$ for $a\in  E$ and  which  is  defined  
in a local model near a singular point  $a$  by
$$
 W_\bullet(x):=\log m(-\log|z|,-\log|w|   )\qquad\text{for}\qquad  x=(z,w)\in \D^2\simeq \U_a.
$$
Using   \eqref{e:leaf_equation_bis} and \eqref{e:|z|} for $x=\tilde\psi_\alpha(\zeta)$ with $\zeta=u+iv\in\C$ and $\alpha\in \A,$  we  get that
\begin{equation*}
W_\bullet(x)=\log m\big(v, (\Im \lambda) u+(\Re \lambda) v\big).
\end{equation*} 
Using  the above  properties of $m,$ a straighforward computation shows that the first derivative and the second one  with respect to $u,v$  satisfy
\begin{eqnarray*}
D\log m\big(v, (\Im \lambda) u+(\Re \lambda) v\big)&=&O\big((\min\{v, (\Im \lambda) u+(\Re \lambda) v\})^{-1}\big),\\
D^2\log m\big(v, (\Im \lambda) u+(\Re \lambda) v\big)&=&O\big((\min\{v, (\Im \lambda) u+(\Re \lambda) v\})^{-2}\big).
\end{eqnarray*}
Note that   $$\lof \dist(x,E)\approx \log{\|(z,w)\|}\approx \min \{-\log|z|,-\log|w|\}\approx m(-\log|z|,-\log|w|   )$$
and $\tilde\psi'_\alpha(\zeta)\approx \|(z,w)\|.$
Combining   these and  \eqref{e:leaf_equation_bis}, the result follows.
\endproof

\begin{lemma}\label{L:theta_eps}
For  every $0<\epsilon\ll 1,$ there is  a smooth function  $ \theta_\epsilon:\  X\to [0,1]$  such that
$\theta_\epsilon(x)=0$ for $\dist(x,E))\leq \epsilon^2$ and $\theta_\epsilon(x)=1$ for $ \dist(x,E))\geq \epsilon,$ and 
\begin{eqnarray*}
|d \theta_\epsilon(x)|_{L_x}|&\leq&  c|\log \epsilon|^{-1}(\dist(x,E))^{-1},\\
|\ddc \theta_\epsilon(x)|_{L_x}|&\leq&  c|\log \epsilon|^{-2}(\dist(x,E))^{-2}
 \end{eqnarray*}
 for  all $x\in X\setminus E.$  Here $c>1$ is  a constant independent of $\epsilon.$ 
\end{lemma}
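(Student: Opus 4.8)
The plan is to write $\theta_\epsilon$ down by hand on each singular flow box and to set $\theta_\epsilon\equiv 1$ on the complement of $\bigcup_{a\in E}\U_a$. Fix once and for all a smooth function $\chi_0\colon\R\to[0,1]$ with $\chi_0\equiv 0$ on $(-\infty,0]$ and $\chi_0\equiv 1$ on $[\tfrac12,\infty)$, and put $L:=-\log\epsilon>0$. In a singular flow box $\U_a\simeq\D^2$ with coordinates $(z,w)$ I would set
\[
q_1:=\frac{\log|z|-2\log\epsilon}{L},\qquad q_2:=\frac{\log|w|-2\log\epsilon}{L},\qquad \theta_\epsilon:=1-\bigl(1-\chi_0(q_1)\bigr)\bigl(1-\chi_0(q_2)\bigr).
\]
The apparent singularities of $q_1,q_2$ along $\{zw=0\}$ are harmless: near $\{z=0\}$ one has $|z|\le\epsilon^2$, hence $q_1\le 0$ and $\chi_0(q_1)\equiv 0$, so there $\theta_\epsilon=\chi_0(q_2)$ is smooth, and symmetrically near $\{w=0\}$. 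Since $0\le\chi_0\le1$, $\theta_\epsilon=0$ holds wherever $\chi_0(q_1)=\chi_0(q_2)=0$, i.e. wherever $|z|\le\epsilon^2$ and $|w|\le\epsilon^2$, which contains $\{\dist(x,E)\le\epsilon^2\}$; and $\theta_\epsilon=1$ holds wherever $\chi_0(q_1)=1$ or $\chi_0(q_2)=1$, i.e. wherever $\max(|z|,|w|)\ge\epsilon^{3/2}$, which for $\epsilon$ small contains both $\{\dist(x,E)\ge\epsilon\}$ and a neighbourhood of $\partial\U_a$. Thus the local formula is identically $1$ outside a compact subset of $\U_a$, the two prescriptions glue to a smooth $\theta_\epsilon\colon X\to[0,1]$ with the required support properties, and it remains only to prove the two derivative estimates, for which one may work inside a fixed $\U_a$. (Here and below $|\cdot|_{L_x}$ denotes the norm of the restriction to the leaf $L_x$, taken with $g_X$.)

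The hard part — the one genuinely nontrivial step, everything else being bookkeeping with the fixed one-variable cutoff $\chi_0$ — is the following observation: in the local model $\log|z|$ and $\log|w|$ restrict to \emph{pluriharmonic} functions on each leaf, and with a \emph{small} leafwise differential. Indeed, along the leaf through $(z_0,w_0)$ parametrised by $\psi_{(z_0,w_0)}(\zeta)=(z_0e^{i\zeta},w_0e^{i\lambda\zeta})$ as in \eqref{e:leaf_equation}, one computes $\log|z|\circ\psi_{(z_0,w_0)}(\zeta)=\log|z_0|-\Im\zeta$ and $\log|w|\circ\psi_{(z_0,w_0)}(\zeta)=\log|w_0|-\Im(\lambda\zeta)$, both affine in $\zeta$; hence $q_1,q_2$ are leafwise pluriharmonic, so leafwise
\[
\ddc(\chi_0\circ q_j)\big|_{L_x}=\chi_0''(q_j)\,dq_j\wedge \dc q_j\big|_{L_x},
\]
the potentially dangerous term $\chi_0'(q_j)\,\ddc q_j$ dropping out entirely. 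Moreover, the pulled-back differentials $d\bigl(\log|z|\circ\psi_{(z_0,w_0)}\bigr)$ and $d\bigl(\log|w|\circ\psi_{(z_0,w_0)}\bigr)$ have Euclidean norm bounded by a constant depending only on $\lambda$, while $\bigl|d\psi_{(z_0,w_0)}(0)\bigr|_{g_X}^2=|z_0|^2+|\lambda|^2|w_0|^2\approx\dist\bigl((z_0,w_0),E\bigr)^2$ by \eqref{e:dpsi_x}. Pushing these through $\psi_{(z_0,w_0)}$ yields
\[
\bigl|d(\log|z|)\bigr|_{L_x}\lesssim\dist(x,E)^{-1},\qquad \bigl|d(\log|z|)\wedge \dc(\log|z|)\bigr|_{L_x}\lesssim\dist(x,E)^{-2},
\]
and the same for $\log|w|$. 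Heuristically, the large ambient gradient of $\log|z|$ near $\{z=0\}$ is almost transverse to the leaf and therefore does not enter the leafwise estimates; this is exactly what produces the sharp powers of $|\log\epsilon|$ and keeps the $\dist(x,E)^{-2}$ (rather than $|z|^{-2}$).

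Granting this, put $u:=1-\chi_0(q_1)$ and $v:=1-\chi_0(q_2)$, so that $1-\theta_\epsilon=uv$ and therefore
\[
d\theta_\epsilon=-\bigl(u\,dv+v\,du\bigr),\qquad \ddc\theta_\epsilon=-\bigl(u\,\ddc v+v\,\ddc u+du\wedge \dc v+dv\wedge \dc u\bigr).
\]
Since $dq_1=L^{-1}d(\log|z|)$, the estimates above give $|du|_{L_x}=|\chi_0'(q_1)|\,|dq_1|_{L_x}\lesssim|\log\epsilon|^{-1}\dist(x,E)^{-1}$ and $|\ddc u|_{L_x}=|\chi_0''(q_1)|\,|dq_1\wedge \dc q_1|_{L_x}\lesssim|\log\epsilon|^{-2}\dist(x,E)^{-2}$, with identical bounds for $v$. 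Using $0\le u,v\le 1$ and the elementary leafwise inequality $|du\wedge \dc v|_{L_x}\lesssim|du|_{L_x}\,|dv|_{L_x}$, the first displayed inequality of the lemma follows from the formula for $d\theta_\epsilon$ and the second from the formula for $\ddc\theta_\epsilon$; all constants depend only on $\chi_0$, $\lambda$, and the fixed flow-box data, not on $\epsilon$. On $X\setminus\bigcup_{a\in E}\U_a$ and near the boundaries $\partial\U_a$ the function $\theta_\epsilon$ is locally constant, so the two estimates hold there trivially (with $\dist(x,E)$ bounded below). This completes the plan.
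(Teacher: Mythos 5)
Your proposal is correct, and it follows the same basic strategy as the paper: build $\theta_\epsilon$ explicitly on each singular flow box as a cutoff of $(\log|z|,\log|w|)$ rescaled by $|\log\epsilon|$, glue with $\theta_\epsilon\equiv 1$ elsewhere, and exploit the leaf parametrization $\psi_x$ (or equivalently $\tilde\psi_\alpha$) of \eqref{e:leaf_equation}--\eqref{e:leaf_equation_bis} to turn estimates in $\zeta$ into leafwise estimates on $X$, gaining two powers of $\dist(x,E)$ from \eqref{e:dpsi_x} and two powers of $|\log\epsilon|$ from the rescaling. The one structural difference is the shape of the cutoff. The paper composes a one-variable cutoff $\rho$ with the ``regularized min'' $m(-\log|z|,-\log|w|)$ already introduced in Lemma \ref{L:W_bull}, and refers to that lemma for the derivative calculus; you instead take the tensor product $1-(1-\chi_0(q_1))(1-\chi_0(q_2))$ of two one-variable cutoffs of $\log|z|$ and $\log|w|$. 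Your version is self-contained (it does not need $m$) and makes fully explicit the key observation underlying both arguments, namely that $\log|z|$ and $\log|w|$ are leafwise affine in $\zeta$, so that $\ddc q_j|_{L_x}=0$ and the potentially singular term $\chi_0'(q_j)\,\ddc q_j$ drops out, leaving only the harmless $\chi_0''(q_j)\,dq_j\wedge\dc q_j$ contribution; this is precisely the mechanism the paper invokes (implicitly, via the $u,v$ derivatives in Lemma \ref{L:W_bull}) to get $(\dist(x,E))^{-2}$ rather than the naive $|z|^{-2}+|w|^{-2}$. Both achieve the same bounds and the same support constraint $\supp d\theta_\epsilon\subset\{\epsilon^2\lesssim\dist(x,E)\lesssim\epsilon\}$ used later in Lemma \ref{L:convergence}, so the two constructions are interchangeable.
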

\proof First we construct  $\theta_\epsilon$  on a singular flow box $\U_a,$ $a\in E.$
Let $\rho:\ \R\to  [0,1]$ be  a  smooth   function   such that  $\rho(t)=1$ for $t\leq 1$  and $\rho(t)=0$ for $t\geq 2.$
For $ x=(z,w)\in \D^2\simeq \U_a,$ we can find  $\zeta=u+iv\in\C$ and $\alpha\in \A$ such that  $x=\tilde\psi_\alpha(\zeta)$
using   \eqref{e:leaf_equation_bis} and \eqref{e:|z|}. Now  we  set
 $$\theta_\epsilon(x):= \rho\Big({m\big(v, (\Im \lambda) u+(\Re \lambda) v\big)\over  |\log\epsilon| }\Big),$$
 where $m$ is  the  the   ``regularized min'' function constructed in Lemma \ref{L:W_bull}.
 For  $x\in  X\setminus  \bigcup_{a\in E}\U_a,$  we  simply set $\theta_\epsilon(x)=1.$
 
 As in Lemma \ref{L:W_bull} we check  that all desired properties are satisfied.
\endproof

\begin{lemma}\label{L:convergence} There is a constant $c>0$ such that
$$
\int_X W_\bullet(x) d\nu_n(x)<c \qquad\text{for}\qquad n\geq 1.
$$
\end{lemma}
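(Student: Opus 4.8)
The plan is to introduce the cut-off functions $\theta_\epsilon$ of Lemma~\ref{L:theta_eps}, to bound $\int_X W_\bullet\theta_\epsilon\,d\nu_n$ by a constant independent of $n$ and $\epsilon$, and then to let $\epsilon\to0$ via Fatou's lemma (note that $\nu_n$ charges no point of $E$, being a smooth form times $T$ and the Lelong numbers of $T$ vanishing by \cite{NguyenVietAnh18a}). Throughout I would use two facts. First, since $T$ is directed, \eqref{e:mu} and \eqref{e:relation_Poincare_Hermitian_metrics} give $g_X\wedge T=\eta^2\mu$, and by Lemma~\ref{L:Poincare} one has $\eta\le c$ and $\eta(x)\approx\dist(x,E)\lof\dist(x,E)$, hence near $E$
\[
g_X\wedge T\ \approx\ \dist(x,E)^2\,(\lof\dist(x,E))^2\,\mu .
\]
Second, $|\psi_n|\le|\psi|\le\lof\lof\dist(x,E)+c\le c'\,W_\bullet$ by \eqref{e:bound_psi_n} and Lemma~\ref{L:W_bull}, while $W_\bullet\lesssim\lof\dist(x,E)$ and $\int_X\lof\dist(x,E)\,d\mu<\infty$ by \eqref{e:integrability}.

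First I would split, using \eqref{e:nu_n},
\[
\int_X W_\bullet\theta_\epsilon\,d\nu_n=\int_X W_\bullet\theta_\epsilon\,\ddc\psi_n\wedge T+\int_X W_\bullet\theta_\epsilon\big(c_1(\Cotan(\Fc),h^*_0)+\epsilon_n g_X\big)\wedge T .
\]
The last integral is harmless: $c_1(\Cotan(\Fc),h^*_0)$ and $g_X$ being smooth and $T\ge0$, it is at most $c\int_X W_\bullet\,g_X\wedge T=c\int_X W_\bullet\,\eta^2\,d\mu\le c\int_X\lof\dist(x,E)\,d\mu<\infty$. For the first integral, the whole point of the cut-off is that $W_\bullet\theta_\epsilon$ vanishes in a neighbourhood of $E$ and so extends smoothly to $X$; integrating by parts against the $\ddc$-closed current $T$ I would then get
\[
\int_X W_\bullet\theta_\epsilon\,\ddc\psi_n\wedge T=\int_X\psi_n\,\ddc(W_\bullet\theta_\epsilon)\wedge T ,
\]
and expand $\ddc(W_\bullet\theta_\epsilon)=\theta_\epsilon\,\ddc W_\bullet+W_\bullet\,\ddc\theta_\epsilon+dW_\bullet\wedge\dc\theta_\epsilon+d\theta_\epsilon\wedge\dc W_\bullet$.

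The leading term $\int_X\psi_n\theta_\epsilon\,\ddc W_\bullet\wedge T$ is where the estimate really happens, and here the singular factors cancel exactly: by Lemma~\ref{L:W_bull}, $|\ddc W_\bullet|\le c\,\dist(x,E)^{-2}(\lof\dist(x,E))^{-2}g_X$ on leaves, so (using directedness of $T$ and the displayed bound on $g_X\wedge T$) its integrand is at most a constant times $|\psi_n|\,\dist^{-2}(\lof\dist)^{-2}\cdot\dist^2(\lof\dist)^2\,d\mu\lesssim W_\bullet\,d\mu\lesssim\lof\dist(x,E)\,d\mu$, which is integrable by \eqref{e:integrability}; this term is thus bounded uniformly in $n$ and $\epsilon$. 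The remaining three terms are supported in the shell $\{\epsilon^2\le\dist(x,E)\le\epsilon\}$, where $\lof\dist\approx|\log\epsilon|$ and $W_\bullet\approx\log|\log\epsilon|$; feeding in the bounds of Lemma~\ref{L:theta_eps} ($|d\theta_\epsilon|\lesssim|\log\epsilon|^{-1}\dist^{-1}$, $|\ddc\theta_\epsilon|\lesssim|\log\epsilon|^{-2}\dist^{-2}$) and of Lemma~\ref{L:W_bull}, together again with $g_X\wedge T\lesssim\dist^2|\log\epsilon|^2\mu$, each of them is seen to be at most a constant times $(\log|\log\epsilon|)^2\,\mu\big(\{\dist(x,E)\le\epsilon\}\big)$. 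Since $|\log\epsilon|\,\mu(\{\dist(x,E)\le\epsilon\})\le\int_{\{\dist\le\epsilon\}}\lof\dist(x,E)\,d\mu\to0$ as $\epsilon\to0$ by \eqref{e:integrability} and $(\log|\log\epsilon|)^2/|\log\epsilon|\to0$, these terms tend to $0$ as $\epsilon\to0$, uniformly in $n$. Combining everything gives $\int_X W_\bullet\theta_\epsilon\,d\nu_n\le c$ for all $n$ and all small $\epsilon$, and letting $\epsilon\to0$ finishes the proof.

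The hard part is the bookkeeping of the singular weights in the integration by parts: the slow growth $W_\bullet\approx\lof\lof\dist$, the decay $|\ddc W_\bullet|\lesssim\dist^{-2}(\lof\dist)^{-2}g_X$ and the vanishing of $g_X\wedge T=\eta^2\mu$ like $\dist^2(\lof\dist)^2\mu$ must conspire so that the principal term is controlled by $\int\lof\dist\,d\mu$, which is finite only thanks to the sharp integrability \eqref{e:integrability} (equivalently Proposition~\ref{P:Lyapunov_vs_kappa}); and one must then check that the cut-off error terms, which carry the unfavourable factor $(\log|\log\epsilon|)^2$ produced by $W_\bullet$ and $\psi_n$ on the shell, are nevertheless killed by the rate $\mu(\{\dist\le\epsilon\})=o(1/|\log\epsilon|)$ coming from the same integrability. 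Arranging the integration by parts so that no derivative of $\psi_n$ is left over — only $|\psi_n|\le|\psi|$ is at our disposal — is the other delicate point.
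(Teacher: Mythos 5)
Your scheme coincides with the paper's up to the key integration by parts, but that step is where the argument breaks. You write
\[
\int_X W_\bullet\theta_\epsilon\,\ddc\psi_n\wedge T=\int_X\psi_n\,\ddc(W_\bullet\theta_\epsilon)\wedge T,
\]
which would be correct if $T$ were $d$-closed. Here $T$ is only positive $\ddc$-closed, and by assumption there is \emph{no} nonzero directed positive closed current, so $dT\ne0$. Performing Stokes twice on a $\ddc$-closed current produces two additional terms,
\[
\big\langle \dbar(\theta_\epsilon W_\bullet)\wedge\partial T,\psi_n\big\rangle
\quad\text{and}\quad
\big\langle \partial(\theta_\epsilon W_\bullet)\wedge\dbar T,\psi_n\big\rangle,
\]
and these do not vanish. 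Your proposal simply does not see them, so the bound it produces is a bound for only part of the quantity $\langle W_\bullet\theta_\epsilon,\ddc\psi_n\wedge T\rangle$.

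Controlling those extra terms is the genuinely nontrivial part of the paper's proof. One writes $\partial T=\tau\wedge T$ for a $(1,0)$-form $\tau$ defined $\mu$-a.e., bounds the integrands using the gradient estimates of Lemmas~\ref{L:W_bull} and~\ref{L:theta_eps} together with $|\psi_n|\le|\psi|\lesssim\lof\lof\dist(\cdot,E)$, and then applies Cauchy--Schwarz. The crucial input that makes the Cauchy--Schwarz bound finite is the Forn\ae ss--Sibony inequality $i\tau\wedge\bar\tau\wedge T\le T\wedge g_P=\mu$, after which the bound reduces again to $\int_X(\lof\lof\dist(x,E))^2\,d\mu<\infty$. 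Without this ingredient the lemma is not proved. The rest of your argument (cut-off, estimate of the $\ddc W_\bullet$ term against $\eta^2\mu$, shell estimates for the cut-off error terms, passing to the limit in $\epsilon$) matches the paper's computations for $I_1$ and is fine.
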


\proof By  \eqref{e:nu_n},
$\nu_n$  does not give mass to  $E$ as  $T$ is  a positive $\ddc$-current of bidimension $(1,1).$  
On the  other hand,  by Lemma \ref{L:theta_eps}, $\theta_\epsilon\leq \ind_{X\setminus E}$ and $\theta_\epsilon\to\ind_{X\setminus E}$  on $X$ as $\epsilon\searrow 0,$ and the convergence is local  uniform
on $X\setminus E.$
Therefore, we get that 
$$
\int_X W_\bullet(x) d\nu_n(x)=\lim\limits_{\epsilon\to 0}    \int_X\theta_\epsilon W_\bullet(x) d\nu_n(x).
$$
To  prove the lemma we need to show that 
the   integral in the right hand-side is  
bounded independently of $n$ and $\epsilon.$ By  \eqref{e:nu_n},
this integral is
equal to  
\begin{equation*}
 \langle  \theta_\epsilon W_\bullet, \ddc \psi_n\wedge T\rangle + \langle \theta_\epsilon  W_\bullet, c_1(\Cotan(\Fc),h^*_0)\wedge T\rangle 
 +\epsilon_n   \langle  \theta_\epsilon W_\bullet, g_X \wedge T\rangle.
\end{equation*}
Using the  inequality $0\leq \theta_\epsilon\leq 1$ of  Lemma \ref{L:theta_eps}, and 
using  the  second  inequality  of  Lemma \ref{L:W_bull}, the second term is bounded and the last term tends to $0$  as $n$ tends to infinity.
Hence, it suffices to show that  the first term is  bounded. Since $\theta_\epsilon W_\bullet$  is a smooth function on $X,$
 by Stokes' theorem the first term is  equal to 
\begin{equation*} \langle\ddc  (\theta_\epsilon  W_\bullet)\wedge T,  \psi_n\rangle+ \langle \dbar  (\theta_\epsilon  W_\bullet)\wedge \partial T,  \psi_n\rangle
+  \langle \partial  (\theta_\epsilon  W_\bullet)\wedge \dbar T,  \psi_n\rangle=:I_1+I_2+I_3.
\end{equation*}
Now  we show  that $I_1,$ $I_2$ and $I_3$  are all uniformly bounded independently of $n$ and $\epsilon.$
 
 We expand  $I_1$ and obtain  that
 \begin{eqnarray*}
  I&=&\langle\theta_\epsilon\ddc    W_\bullet\wedge T,  \psi_n\rangle+\langle\partial \theta_\epsilon\wedge \dbar    W_\bullet\wedge T,  \psi_n\rangle
  +\langle\dbar\theta_\epsilon\wedge \partial    W_\bullet\wedge T,  \psi_n\rangle+\langle  W_\bullet\ddc\theta_\epsilon  \wedge T,  \psi_n\rangle\\
  &=:&I_{11}+I_{12}+I_{13}+I_{14}.
 \end{eqnarray*}
Using the  inequality $0\leq \theta_\epsilon\leq 1$ of  Lemma \ref{L:theta_eps}  and
using the fourth inequality of Lemma \ref{L:W_bull}   and  \eqref{e:bound_psi_n},  we  see that
$$
|I_{11}|\lesssim \int  (\dist(x,E))^{-2}(\lof \dist(x,E))^{-2}(\lof \lof \dist(x,E)) g_X(x)|_{L_x}\wedge T(x).
$$
Using this and  \eqref{e:mu} and \eqref{e:relation_Poincare_Hermitian_metrics}, and applying Lemma \ref{L:Poincare}, we infer that $|I_{11}|$  is  bounded by
$
\int_X \lof\lof \dist(x,E) d\mu(x),
$  which is  finite  by \eqref{e:integrability}.
Hence $I_{11}$ is uniformly  bounded independently of $n$ and $\epsilon.$

 Using  the estimate on $\ddc\theta_\epsilon$  of  Lemma \ref{L:theta_eps} and the fact that this form has a support in $\{x\in X:\  \epsilon^2\leq \dist(x,E)\leq \epsilon\}$
 and the estimate on  $   W_\bullet$  of Lemma \ref{L:W_bull}  and  \eqref{e:bound_psi_n}, we deduce that
$$
|I_{14}|\lesssim \int  (\dist(x,E))^{-2}(\lof \dist(x,E))^{-2}(\lof \lof \dist(x,E))^2 g_X(x)|_{L_x}\wedge T(x).
$$
Using this and  \eqref{e:mu} and \eqref{e:relation_Poincare_Hermitian_metrics}, and applying Lemma \ref{L:Poincare}, we infer that $|I_{14}|$  is  bounded by
$
\int_X (\lof\lof \dist(x,E))^2 d\mu(x),
$  which is  finite  by \eqref{e:integrability}.
Hence $I_{14}$ is uniformly  bounded independently of $n$ and $\epsilon.$

Similarly, using  the estimate on $d\theta_\epsilon$ of  Lemma \ref{L:theta_eps}  and the estimate on  $d    W_\bullet$  of Lemma \ref{L:W_bull}  and  \eqref{e:bound_psi_n}, we infer that
 $I_{12}$  and $I_{13} $ are bounded.
We  have shown that $I$ is uniformly  bounded independently of $n$ and $\epsilon.$

 Let  $\tau$
be a $(1,0)$-form 
defined almost everywhere with respect to $\mu=T\wedge g_P$ on $X\setminus $  such that
$\partial T=\tau\wedge T .$  
 In a regular flow box $\U$ as in   \eqref{e:decomposition},  we  see that  $\tau= h_\alpha^{-1}\partial h_\alpha$ on the  plaque passing  through $\alpha\in\Sigma$
for $\nu$-almost every $\alpha$. 
 Using \eqref{e:bound_psi_n}  and   the estimate on $\theta_\epsilon,$  $d\theta_\epsilon$ of  Lemma \ref{L:theta_eps}  and the estimate on $W_\bullet,$  $d    W_\bullet$  of Lemma \ref{L:W_bull},
we see that
\begin{equation*}
 |I_2|\leq \int_X O(\dist(x,E))^{-1} (\lof\dist(x,E))^{-1} (\lof \lof\dist(x,E))^{2} \big)\wedge  \tau \wedge T.
\end{equation*}
Applying  Cauchy-Schwarz inequality,  and then using  \eqref{e:mu}, \eqref{e:relation_Poincare_Hermitian_metrics} 
and Lemma \ref{L:Poincare}, we  infer that
\begin{eqnarray*}
 |I_2|^2&\leq& \big(\int_X (\dist(x,E))^{-2}(\lof \dist(x,E))^{-2}(\lof \lof\dist(x,E))^{2} \cdot T\wedge g_X\big) \\
 &\cdot& \big(\int_X (\lof \lof \dist(x,E))^2 i\tau\wedge \bar \tau  \wedge T)\\
 &\lesssim&\big( \int_X (\lof\lof \dist(x,E))^2d\mu(x)\big) \big(\int_X (\lof \lof \dist(x,E))^2 i\tau\wedge \bar \tau  \wedge T) .
\end{eqnarray*}
By  \cite[ Proposition 3]{FornaessSibony10} (see also the proof of  \cite[Proposition 5.28]{NguyenVietAnh19}), we get the following inequality
 $$i\tau\wedge \overline{\tau}\wedge T\leq  T\wedge g_P=\mu\quad\text{on}\quad X\setminus E.$$
Putting  these  estimates together,   we  get that
\begin{equation*}
 |I_2|\leq  \int_X (\lof \lof \dist(x,E))^2 d\mu(x).
\end{equation*}
So by   \eqref{e:integrability}, $I_2$ is uniformly  bounded independently of $n$ and $\epsilon.$
The same argument also shows that $I_3$ is  uniformly  bounded independently of $n$ and $\epsilon.$
The proof is thereby completed.
\endproof

 Now  we arrive  at the

\proof[Proof of the second identity of  Theorem A]
Since the continuous functions  $\psi_n$ converge  to $\psi$  locally uniformly on $X\setminus E$   as $n\to\infty,$ we deduce from \eqref{e:mu_bis} and \eqref{e:nu_n} that
$\nu_n\to \mu$ on $X\setminus E.$ 

 On the other hand, consider the space  $\Cc^\bullet (X)$  of all  continuous function $f:\  X\setminus E\to\R$  such that  
 \begin{equation*}
  \|f\|_\bullet:= \sup\limits_{x\in X\setminus E}  {|f(x)|\over  W_\bullet(x) } <\infty.
 \end{equation*}
Note that $\Cc^\bullet(X)$  endowed with the norm $ \|\cdot\|_\bullet$ is a Banach space.
By Lemma \ref{L:convergence}, $\nu_n$ are continuous  linear forms on  $\Cc^\bullet (X)$ with  norms $\leq c.$
Hence,   every cluster limit of this  sequence is also a continuous  linear form on  $\Cc^\bullet (X)$ with  norm $\leq c.$
Let $\nu$ be  such a cluster limit.  
We will show   that $\nu=\mu.$ Indeed,
the previous paragraph shows that $\nu=\mu$ on $X\setminus E.$
 Moreover, by \eqref{e:mu}, $\mu$ does not give mass to $E.$
As the function  $  W_\bullet$ is  integrable with respect to $\nu,$
$\nu$ does not give  mass to $E.$
Hence,  $\nu=\mu$ on $X.$
Consequently, $\nu_n\to\nu$ on $\Cc^\bullet (X).$
 Applying this   convergence to  the function $\textbf{1}$ and using \eqref{e:mu_bis} and \eqref{e:nu_n},  we get that
\begin{equation*}
\langle \ddc \psi_n, T\rangle +\langle c_1(\Cotan(\Fc),h^*_0),T\rangle +\epsilon_n \langle g_X, T\rangle\to \langle \textbf{1},\mu\rangle\quad\text{as}\quad n\to\infty.
\end{equation*}
The right-hand side is  $\|\mu\|.$
On the other hand, on the  left-hand side, $\langle c_1(\Cotan(\Fc),h^*_0),T\rangle=c_1(\Cotan(\Fc))\smile \{T\}$  since
$h^*_0$ is a smooth metric.
Moreover, 
$\langle \ddc \psi_n, T\rangle=0$ because  $T$ is $\ddc$-closed, and $\epsilon_n \langle g_X, T\rangle=O(\epsilon_n)\to 0.$
Therefore,  the above limit implies identity \eqref{e:coho_mass}. 
\endproof

\proof[End of  the proof of  Corollary C](see also \cite[Proposition 3.12]{DeroinKleptsyn})
By Theorem  \ref{T:DNS} let $T$ be the unique directed positive harmonic  current such that $\mu$ given by \eqref{e:mu} is a probability measure.
Therefore, it follows from identity \eqref{e:coho_mass} that  $c_1(\Cotan(\Fc))\smile\{T\}=1.$ On the other hand,
it  is well-known (see e.g. \cite{Brunella00}) that $\Nor(\Fc)$ is  equal to $\Oc(d+2)$ and $\Cotan(\Fc)$ is  equal to  $\Oc(d-1).$
Therefore, by  identity \eqref{e:coho_exponent},
\begin{eqnarray*}
\chi(\Fc)=\chi(T)=-\{c_1(\Nor(\Fc))\}\smile \{T\}&=&-(d+2)\{ c_1(\Oc(1))\}\smile \{T\}\\
&=&-{d+2\over d-1}c_1(\Cotan(\Fc))\smile \{T\}\\
&=&-{d+2\over d-1}.
\end{eqnarray*}
Hence, the result follows.
\endproof

 \section{Negative Lyapunov exponent}\label{S:Negativity}
 This  section is  devoted to the proof of  Theorem B.
 By Theorem  \ref{T:Main_1} we may assume without loss of generality that $g_X$ coincides with the Euclidean metric in a local model for every singular flow box $\U_a$ with   $a\in E.$ 
 Consider the space  $\Cc^\star(X)$  of all  continuous functions $f:\  X\setminus E\to\R$  such that  
 \begin{equation*}
  \sup\limits_{x\in X\setminus E}  {|f(x)|\over W (x)} <\infty.
 \end{equation*}
 Consider  the  norm
 \begin{equation*}
  \|f\|_\star:= \sup\limits_{x\in X\setminus E}  {|f(x)|\over W_\star(x)} <\infty,
 \end{equation*}
 where 
 \begin{equation}\label{e:W_star} W_\star(x):= \lof \dist(x,E)\cdot  W (x)\qquad\text{for}\qquad x\in X\setminus E.
 \end{equation}
By  Theorem \ref{T:DNS}, let $T$ be the unique  directed positive  $\ddc$-closed current and let $\mu$ be 
the  measure  associated to $T$ by \eqref{e:mu}. So $\mu$ is  a probability measure.
 Consider the function $\kappa$  defined in \eqref{e:kappa}.
 
 \begin{proposition}\label{P:F_membership}
  \begin{enumerate}
   \item The function $\kappa$  belongs to  $\Cc^\star(X).$
   \item  $\Dc(X\setminus E)$ is  a dense subspace of $\Cc^\star(X).$ 
  \end{enumerate}

 \end{proposition}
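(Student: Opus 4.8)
The plan is to prove assertion (1) first, since it is essentially a bookkeeping consequence of estimates already established, and then to handle the density statement (2) by a standard cut-off argument adapted to the weighted norm.

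For assertion (1), I would recall that $\kappa(x)=(\Delta_P\kappa_x)(0)$ by \eqref{e:kappa}, and then bound $|\kappa(x)|$ in the two regimes. Outside the singular flow boxes, the holonomy cocycle is tame and one has $|\kappa(x)|\leq c_0$ (this was observed in the proof of Proposition \ref{P:Lyapunov_vs_kappa}, via the regular flow box description and $\eta\approx 1$ there); since $W(x)=1$ there and $\lof\dist(x,E)\geq 1$ everywhere, this gives $|\kappa(x)|/W_\star(x)\leq c_0$. Inside a singular flow box $\U_a\simeq\D^2$, the second inequality in Lemma \ref{L:laplacian_hol} (1) (recalling $g_X$ is Euclidean there, by the reduction at the start of Section \ref{S:Negativity}) gives
$$|\kappa(x)|\leq c\,\frac{|z|^2|w|^2}{(|z|^2+|w|^2)^2}\,(\lof\|(z,w)\|)^2,$$
which is dominated by $c\,W(x)$ by the definition \eqref{e:W} of $W$, hence by $c\,W_\star(x)$. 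Combining the two regimes yields $\sup_{x\in X\setminus E}|\kappa(x)|/W_\star(x)<\infty$, which is exactly membership of $\kappa$ in $\Cc^\star(X)$ (note the norm $\|\cdot\|_\star$ uses $W_\star$, while $\Cc^\star(X)$ is defined with the slightly smaller weight $W$; since $W\leq W_\star$, finiteness with the $W_\star$-normalization still places $\kappa$ in $\Cc^\star(X)$, and in fact one gets the sharper bound with $W$).

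For assertion (2), I would fix $f\in\Cc^\star(X)$ and approximate it in $\|\cdot\|_\star$-norm by compactly supported smooth functions on $X\setminus E$. First truncate: using cut-off functions $\theta_\epsilon$ as in Lemma \ref{L:theta_eps} (or simpler radial ones, since here we only need $C^0$ control), set $f_\epsilon:=\theta_\epsilon f$, which is supported in $\{\dist(x,E)\geq\epsilon^2\}$, hence away from $E$. Then $\|f-f_\epsilon\|_\star=\sup_{x}\,(1-\theta_\epsilon(x))|f(x)|/W_\star(x)$, and this is supported in $\{\dist(x,E)<\epsilon\}$, where I claim it tends to $0$. The point is the strict gain in the weight: on a singular flow box, $|f(x)|\leq \|f\|\cdot W(x)$ where $\|f\|$ is the (finite) $W$-seminorm of $f$ from the definition of $\Cc^\star(X)$, so
$$\frac{(1-\theta_\epsilon(x))|f(x)|}{W_\star(x)}\leq \frac{W(x)}{W_\star(x)}\,\|f\|=\frac{\|f\|}{\lof\dist(x,E)}\longrightarrow 0$$
as $\dist(x,E)\to 0$, uniformly on $\{\dist(x,E)<\epsilon\}$ as $\epsilon\to0$. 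This is precisely why $W_\star$ carries the extra factor $\lof\dist(x,E)$: it makes the truncation error vanish for every $f$ with finite $W$-seminorm. Finally, each $f_\epsilon$ is continuous with compact support in $X\setminus E$, so it can be approximated uniformly on its support by smooth functions in $\Dc(X\setminus E)$ (standard convolution/partition of unity on the compact set $\supp f_\epsilon\subset X\setminus E$), and since $W_\star$ is bounded below by a positive constant on any such compact set, uniform convergence implies $\|\cdot\|_\star$-convergence. Composing the two approximations gives density.

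The main obstacle is a conceptual rather than technical one: getting the weights straight. One must check that the gap between $W$ (used in the definition of the space $\Cc^\star(X)$) and $W_\star=\lof\dist(x,E)\cdot W$ (used in the norm $\|\cdot\|_\star$) is exactly what makes both assertions work — assertion (1) needs $|\kappa|\lesssim W\leq W_\star$, and assertion (2) needs the truncation gain $W/W_\star=(\lof\dist(x,E))^{-1}\to0$. Once this is recognized, both parts reduce to the already-available estimates (Lemma \ref{L:laplacian_hol}, Lemma \ref{L:theta_eps}) together with elementary functional analysis.
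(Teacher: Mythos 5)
Your proof is correct and follows essentially the same route as the paper: bound $|\kappa|\lesssim W$ by splitting into regular versus singular flow boxes (via Lemma~\ref{L:laplacian_hol}), then for density multiply by cut-offs $\theta_\epsilon$ and exploit the ratio $W/W_\star=(\lof\dist(\cdot,E))^{-1}\to 0$ near $E$, followed by smoothing on the compact support. One small logical slip in your parenthetical for assertion (1): since $W\le W_\star$, a bound $\sup|\kappa|/W_\star<\infty$ does \emph{not} by itself imply the defining condition $\sup|\kappa|/W<\infty$ of $\Cc^\star(X)$ --- the implication runs the other way. Your estimates do in fact establish $|\kappa|\lesssim W$ directly in both regimes, which is what membership requires, so the conclusion stands; just the aside misstates which bound is the operative one.
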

\proof \noindent {\bf Proof of assertion (1).} Since the Poincar\'e metric $g_P$ is  leafwise smooth and transversally continuous on $X\setminus E,$ we
deduce  that  $\kappa\in\Cc(X\setminus E).$
Next, we infer from Lemma \ref{L:Poincare} that in a regular flow box $\kappa(x):=\Delta_P\kappa_x(0)$ is  a  bounded function.
Since $W(x)\geq 1,$  it follows that $\kappa(x)\lesssim W(x)$ in this case.

On the other hand,   by Lemma \ref{L:laplacian_hol} (1) we see that for a point $x=(z,w)$ in the local model of  a  singular flow box $\U_a\simeq\D^2,$ $a\in E,$
$$
\kappa(x)\lesssim {|z|^2|w|^2\over (|z|^2+|w|^2)^2}  (\lof {\|(z,w)\|})^2\lesssim W(x).
$$
So in all cases $\kappa(x)\lesssim W(x).$ Hence, $\kappa$ belongs to $\Cc^\star(X).$

\noindent {\bf Proof of assertion (2).}
Let $f$ be a function in $\Cc^\star(X).$
For  $0<\epsilon<\epsilon_0$ consider  the function
$f_\epsilon:=\theta_\epsilon f,$  where  the familly $(\theta_\epsilon)_{0<\epsilon<\epsilon_0}$
is  given in  \eqref{e:theta}.
Using the first two
  properties of \eqref{e:theta} and  the  assumption  $f\in\Cc(X\setminus E)$ with $\sup\limits_{x\in X\setminus E}  {|f(x)|\over W (x)} <\infty,$
  we see that $f_\epsilon\in\Cc_0(X\setminus E)$ and  
  $$
  \| f-f_\epsilon \|_\star \leq  (\log\epsilon)^{-1}\cdot\Big(\sup\limits_{x\in X\setminus E:\ \dist(x,E)\leq \epsilon }  {|f(x)|\over W (x)}\Big) \to 0\qquad\text{as}\qquad \epsilon\to 0.
  $$
  Hence, $\Cc_0(X\setminus E)$ is a dense subspace of  $\Cc^\star(X).$ Since $\Dc_0(X\setminus E)$ is a dense subspace of $\Cc_0(X\setminus E),$ assertion (2) follows.
\endproof

 In the  next key lemma  we use  Hahn-Banach separation theorem  following an idea of Sullivan \cite{Sullivan} and Ghys \cite{Ghys88}. 
In  the  case of transversally  conformal foliations without singularities, the idea has been used in \cite{DeroinKleptsyn}.
 Note  that  by  Proposition \ref{P:Lyapunov_vs_kappa}, $\int_X \kappa(x)d\mu(x)$ is well-defined.
 
 \begin{lemma}\label{L:ddc_approx}
  Suppose  that $\int_X \kappa(x)d\mu(x)\geq 0.$  Then  there exist a  sequence   of  smooth real-valued functions  $\psi_n$  compactly supported in $X\setminus E$
  and   a  sequence of $(\epsilon_n)\subset \R^+$  such that 
$\epsilon_n\searrow 0$ and  that
  $$
     {\kappa(x) -\Delta_P\psi_n(x)\over W_\star(x)}\geq -\epsilon_n\quad \text{for all}\quad  x\in X\setminus E.
  $$
 \end{lemma}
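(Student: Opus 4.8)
The plan is to argue by contradiction via the Hahn--Banach separation theorem in the Banach space $\Cc^\star(X)$, in the spirit of Sullivan and Ghys; the new point is that the separating functional will be represented by an honest finite harmonic measure carried by $X\setminus E$, which by the uniqueness part of Theorem~\ref{T:DNS} is forced to be a multiple of $\mu$. First I would reformulate the conclusion as a membership statement. Introduce the closed convex cone
$$
\Gc:=\overline{\bigl\{\,\Delta_P\psi+g\ :\ \psi\in\Dc(X\setminus E),\ g\in\Cc^\star(X),\ g\ge 0\,\bigr\}}\subset\Cc^\star(X),
$$
the closure being taken for $\|\cdot\|_\star$; for $\psi\in\Dc(X\setminus E)$ the function $\Delta_P\psi$ is continuous and compactly supported in $X\setminus E$ (on the support of $\psi$ the Poincar\'e metric is comparable to $g_X$ by Lemma~\ref{L:Poincare}), hence lies in $\Cc^\star(X)$ since $W_\star\ge\lof\dist\ge 1$. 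It is then enough to prove $\kappa\in\Gc$: writing $\kappa=\lim_n(\Delta_P\psi_n+g_n)$ with $g_n\ge 0$ and $\delta_n:=\|\kappa-\Delta_P\psi_n-g_n\|_\star\to 0$, one has $\kappa-\Delta_P\psi_n=g_n+(\kappa-\Delta_P\psi_n-g_n)\ge -\delta_n W_\star$ pointwise on $X\setminus E$; replacing $\delta_n$ by $\epsilon_n:=\sup_{m\ge n}\delta_m$ (finite, and $\searrow 0$) only weakens this inequality and yields exactly the statement of the lemma.

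Now suppose, for contradiction, that $\kappa\notin\Gc$. By the geometric Hahn--Banach theorem there is $\Lambda\in(\Cc^\star(X))^{*}$ with $\Lambda(\kappa)<0\le\Lambda(u)$ for all $u\in\Gc$ (using that $\Gc$ is a cone containing $0$ to pass from a strict separation to $\Lambda\ge 0$ on $\Gc$). Taking $u=\pm\Delta_P\psi$ gives $\Lambda(\Delta_P\psi)=0$ for every $\psi\in\Dc(X\setminus E)$, and taking $u=g\ge 0$ shows $\Lambda$ is a positive functional. Restricting $\Lambda$ to the subspace $\Cc_0(X\setminus E)\subset\Cc^\star(X)$ of continuous functions with compact support, it is positive, so by the Riesz representation theorem $\Lambda(f)=\int_{X\setminus E}f\,d\nu$ there, for a positive Radon measure $\nu$ on $X\setminus E$. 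Testing against cutoffs $\chi_j\in\Cc_0(X\setminus E)$ with $0\le\chi_j\nearrow 1$ gives $\int\chi_j W_\star\,d\nu=\Lambda(\chi_j W_\star)\le\|\Lambda\|\,\|\chi_j W_\star\|_\star\le\|\Lambda\|$, so $\int_{X\setminus E}W_\star\,d\nu<\infty$ and in particular $\nu$ is finite. Since $f\mapsto\int f\,d\nu$ is $\|\cdot\|_\star$-continuous (because $\bigl|\int f\,d\nu\bigr|\le\|f\|_\star\int W_\star\,d\nu$) and agrees with $\Lambda$ on the dense subspace $\Dc(X\setminus E)$ (Proposition~\ref{P:F_membership}(2)), we conclude $\Lambda(\,\cdot\,)=\int(\,\cdot\,)\,d\nu$ on all of $\Cc^\star(X)$; in particular $\int\kappa\,d\nu=\Lambda(\kappa)<0$ while $\int\Delta_P\psi\,d\nu=0$ for all $\psi\in\Dc(X\setminus E)$.

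Extend $\nu$ to $X$ by $\nu(E):=0$. A standard transversal mollification of elements of $\Dc(\Fc)$, together with dominated convergence (legitimate because $\nu$ is finite), upgrades $\int\Delta_P\psi\,d\nu=0$ for $\psi\in\Dc(X\setminus E)$ to $\int\Delta_P f\,d\nu=0$ for all $f\in\Dc(\Fc)$; thus $\nu$ is a harmonic measure. By Proposition~\ref{P:harmonic_currents_vs_measures} it corresponds to a directed positive $\ddc$-closed current $T_\nu$ of finite Poincar\'e mass $\|\nu\|$, and by the uniqueness in Theorem~\ref{T:DNS} one must have $T_\nu=\|\nu\|\,T$, hence $\nu=\|\nu\|\,\mu=c\,\mu$ with $c\ge 0$. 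Therefore $\int\kappa\,d\nu=c\int\kappa\,d\mu\ge 0$ by the hypothesis of the lemma, contradicting $\int\kappa\,d\nu<0$. Hence $\kappa\in\Gc$, and the lemma follows.

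The step I expect to be the main obstacle is the passage from the abstract functional $\Lambda$ to a genuine \emph{finite} measure $\nu$ on $X\setminus E$ — ruling out that the separating functional concentrates at the singular set — and then recognising $\nu$ as a harmonic measure. The delicacy is that $\Cc^\star(X)$ must be calibrated finely: it has to contain $\kappa$, which forces $W\gtrsim|\kappa|$ (provided by the second inequalities of Lemma~\ref{L:laplacian_hol}(1), cf.\ Proposition~\ref{P:F_membership}(1)), yet it is normed by the strictly larger weight $W_\star=\lof\dist\cdot W$ so that smooth compactly supported functions are dense (Proposition~\ref{P:F_membership}(2)); this density is exactly what lets one upgrade the Riesz representation of $\Lambda|_{\Cc_0(X\setminus E)}$ to a representation on all of $\Cc^\star(X)$, while boundedness of $\Lambda$ forces $\int W_\star\,d\nu<\infty$ and hence $\nu$ finite. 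Once $\nu$ is a finite harmonic measure, identifying it with a multiple of $\mu$ through the uniqueness in Theorem~\ref{T:DNS}, and using that $\int\kappa\,d\mu$ is finite and well-defined by Proposition~\ref{P:Lyapunov_vs_kappa}, produces the contradiction.
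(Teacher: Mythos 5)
Your argument is correct and follows essentially the same route as the paper: Hahn--Banach separation in the weighted space $\Cc^\star(X)$, realization of the separating functional as a finite positive harmonic measure on $X\setminus E$ via density of $\Dc(X\setminus E)$, and identification with a multiple of $\mu$ by the uniqueness in Theorem~\ref{T:DNS}, contradicting $\int\kappa\,d\mu\ge 0$. The only cosmetic difference is that you separate $\kappa$ from the closed cone $\overline{\Ic(X)+\Cc^+(X)}$ directly in $\Cc^\star(X)$, whereas the paper passes to the quotient $\Cc^\star(X)/\overline{\Ic(X)}$ and separates $\pi(\kappa)$ from $\overline{\pi(\Cc^+(X))}$ there; these are equivalent implementations of the same idea.
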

\proof
Consider the space  $\Ic(X)$ of all function  $u:\ X\setminus  E\to\R$ such that there is a smooth real-valued function
 $f$  compactly supported in $X\setminus E$  (i.e.  $f\in\Dc(X\setminus E)$)  such that  $u=\Delta_Pf.$  We  see that $\Ic(X)$ is   a subspace of $\Cc^\star(X).$
 Let $\overline\Ic(X)$ be the closure of $\Ic(X)$ in $\Cc^\star(X).$ 
 Consider the cone $\Cc^+(X)$ of   all functions $f\in\Cc^\star(X)$ such that $f(x)\geq 0$ everywhere.
 Let 
  $\Qc(X)$ be the quotient of  $\Cc^\star(X)$  by  $\overline\Ic(X)$ and $\pi:\  \Cc^\star(X)\to\Qc(X)$ the canonical projection.
  Let $\overline\Cc^+(X)$ be the closure of $\pi(\Cc^+(X))$ in  $\Qc(X).$  By
 Proposition \ref{P:F_membership} (1),
  the  conclusion of the  lemma is  equivalent to  the fact that $\pi(\kappa)\in\Qc(X)$  belongs to  $\overline\Cc^+(X).$
  Suppose  the  contrary in order to get a contradiction. By Hahn-Banach separation  theorem applied to  $\Qc(X),$ there exists a continuous linear
  functional $\hat\nu:\  \Qc(X)\to\R$ such that $\hat\nu\geq 0$ on $\overline\Cc^+(X)$  and  $\hat\nu(\pi(\kappa))<0.$
  So  the continuous linear functionnal  $\nu:=\hat\nu\circ\pi:\  \Cc^\star(X)\to\R $  satisfies that $\nu\geq 0$ on $\Cc^+(X)$  and  $\nu(\kappa)<0$ and $\nu=0$ on  $\Ic(X).$
  Since  we know  by
 Proposition \ref{P:F_membership} (2) that $\Dc(X\setminus  E)$ is  dense in $ \Cc^\star(X),$ it is also  dense in  $\Dc(\Fc),$ and hence  we infer that $\nu$ is  a nonzero  finite positive measure  such that the restriction of $\nu$ to 
  $X\setminus E$ defines a nonzero positive harmonic measure.
  Moreover, $\nu$ does not give mass to the set $E,$ since the function $x\mapsto \dist(x,E)$ belongs to $\Cc^\star(X),$ and hence the Dirac mass $\delta_a$ at any point
  $a\in E$  cannot be evaluated at this function.  Consequently, by Proposition \ref{P:harmonic_currents_vs_measures},
  $\nu=T'\wedge g_P$ for a positive directed $\ddc$-closed current $T'.$
  Therefore, it follows from Theorem \ref{T:DNS} and formula \eqref{e:mu} that $\nu$ is equal to $\mu$  up to a  positive multiplicative constant.  
 Hence,  $\mu(\kappa)$  which  is  of the same sign as  $\nu(\kappa)$  should be  negative, and  we reach  a  contradiction. 
\endproof
\begin{remark}\label{R:ddc_approx}\rm
In Lemma  \ref{L:ddc_approx}
we cannot use the following natural  norm  for $\Cc^\star(X):$ 
\begin{equation*}
  \|f\|:= \sup\limits_{x\in X\setminus E}  {|f(x)|\over W (x)} <\infty.
 \end{equation*}
 Indeed, with this  norm  $\Dc(X\setminus E)$ is  not dense in $\Cc^\star(X):$ the function $x\mapsto \dist(x,E)$ does not belong to the closure of $\Dc(X\setminus E).$

  By adding a constant $c_n$ to  each  $\psi_n,$  we obtain a new  sequence of functions $(\psi_n)$ still satisfying the estimate of Lemma \ref{L:ddc_approx}. Note that
   $\psi_n$ is not necessarily  compactly supported in $X\setminus E.$ In fact,   $\psi_n$ is  constant  near $E.$
\end{remark}

  Consider the  volume form $\Vol$   on $X\setminus E$   given by
  \begin{equation}\label{e:trans_metric}
  \int_{X\setminus E}  fd\Vol=\langle \omega^\perp_X, fg_P\rangle\qquad\text{for}\qquad f\in\Dc(\Fc),
  \end{equation}
where  the transveral form $ \omega^\perp_X$ is given in \eqref{e:tran_form}.
\begin{lemma}\label{L:mu_n}
 \begin{enumerate}  
  \item We have  that  $\langle  W_\star, \Vol\rangle<\infty.$
  \item By adding a constant $c_n$ to  each function  $\psi_n$  given by Lemma \ref{L:ddc_approx} if necessary (see Remark \ref{R:ddc_approx}),  we  may assume  that
  $\langle  W_\star, \mu_n\rangle=1,$  
where $\mu_n$ is  the  volume form on $X\setminus E$  defined by
\begin{equation}\label{e:mu_n}
\mu_n:=\exp{(-2\psi_n)} \Vol.
\end{equation}  
 \end{enumerate}
\end{lemma}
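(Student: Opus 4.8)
The plan is to prove the two assertions in turn; assertion (1) carries all the analytic weight, while assertion (2) is then an elementary normalization. For (1), I would first dispose of the part of $X$ away from the singularities: on $X\setminus\bigcup_{a\in E}\U_a$ one has $W\equiv 1$ and $\lof\dist(\cdot,E)$ bounded, so $W_\star$ is bounded there, while $\Vol$ restricts to a smooth positive density on this relatively compact set and hence has finite mass; that part of $\langle W_\star,\Vol\rangle$ is thus finite. It remains to bound $\int_{\U_a}W_\star\,d\Vol$ on each singular flow box $\U_a\simeq\D^2$, and the heart of the matter is the pointwise estimate
\[
d\Vol\ \lesssim\ \frac{d\Leb}{\dist(\cdot,E)^2\,(\lof\dist(\cdot,E))^2}\qquad\text{on }\U_a\setminus\{0\},
\]
where $\Leb$ denotes Euclidean volume on $\D^2$ and $\dist(x,E)=\|x\|$ there. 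To obtain it I would fix a point of $\D^2\setminus\{0\}$ and work in local holomorphic coordinates adapted to $\Fc$, i.e.\ with one coordinate direction tangent to the leaf and the other $g_X$-orthogonal to it; in such coordinates $g_X$ is block-diagonal, so combining the definitions \eqref{e:tran_form}, \eqref{e:trans_metric} of $\Vol$ with Lemma \ref{L:trans_metric_near_sing}(2) (which gives $\omega^\perp_X\le c\,g_X$) and the identity \eqref{e:relation_Poincare_Hermitian_metrics} ($g_X=\eta^2 g_P$ on leaves) shows that the density of $\Vol$ relative to $d\Leb$ is at most $c\,\eta^{-2}$; the displayed bound then follows from Lemma \ref{L:Poincare}(1), $\eta\approx\dist(\cdot,E)\,\lof\dist(\cdot,E)$.

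Granting this estimate and using $W_\star=\lof\dist(\cdot,E)\cdot W\le 2(\lof\dist(\cdot,E))^3$ (by the inequality $W\le 2(\lof\dist(\cdot,E))^2$ recorded just after \eqref{e:W}), I would conclude
\[
\int_{\U_a}W_\star\,d\Vol\ \lesssim\ \int_{\D^2}\frac{\lof\|x\|}{\|x\|^2}\,d\Leb(x)\ \approx\ \int_0^{\sqrt2}(1+|\log r|)\,r\,dr\ <\ \infty ,
\]
and, summing over the finitely many singular flow boxes together with the regular part, obtain $\langle W_\star,\Vol\rangle<\infty$. An alternative, completely explicit route is to pass to the foliated coordinates $(\zeta,\alpha)\in\S\times\A$ of \eqref{e:leaf_equation_bis}, compute $e^{-2\varphi_a}\Upsilon$ from \eqref{e:local_varphi_bis} via the dual normal form $\beta$ of \eqref{e:tran_form}, multiply by $\tilde\psi_\alpha^\ast g_P$ estimated through Lemma \ref{L:Poincare}(2), and take into account the Jacobian of $(\zeta,\alpha)\mapsto\tilde\psi_\alpha(\zeta)$; this yields the sharpened form $d\Vol\approx\frac{d\Leb(x)}{\|x\|^2(\lof\|x\|)^2}$ and the same conclusion.

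For assertion (2), I would use that the functions $\psi_n$ produced by Lemma \ref{L:ddc_approx} are smooth and compactly supported in $X\setminus E$, hence bounded on the compact space $X$, so $e^{-2\psi_n}$ is pinched between two positive constants; by assertion (1),
\[
0\ <\ \langle W_\star,\,e^{-2\psi_n}\Vol\rangle\ \le\ \big(\sup\nolimits_X e^{-2\psi_n}\big)\,\langle W_\star,\Vol\rangle\ <\ \infty .
\]
Setting $c_n:=\tfrac12\log\langle W_\star,\,e^{-2\psi_n}\Vol\rangle\in\R$ and replacing $\psi_n$ by $\psi_n+c_n$, the new function still satisfies the conclusion of Lemma \ref{L:ddc_approx} because $\Delta_P$ annihilates constants (this is the content of Remark \ref{R:ddc_approx}), it is now constant near $E$, and the measure $\mu_n$ of \eqref{e:mu_n} then satisfies $\langle W_\star,\mu_n\rangle=e^{-2c_n}\langle W_\star,\,e^{-2\psi_n}\Vol\rangle=1$, as wanted.

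The one genuine obstacle is the density estimate for $\Vol$ near a singular point in assertion (1); everything else is bookkeeping. The delicate point there is that the distinguished flow-box coordinates $(\zeta,\alpha)$ on a singular box are \emph{not} orthogonal to the leaves --- the $\alpha$-direction becomes asymptotically tangent to the leaf as one approaches either separatrix --- so to run the soft argument above one must first switch to leaf-adapted coordinates, or else carry out the explicit computation of the transversal weight $e^{-2\varphi_a}$.
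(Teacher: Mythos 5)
Your proof is correct and follows essentially the same route as the paper's. The paper likewise reduces to the singular flow boxes, combines Lemma \ref{L:trans_metric_near_sing} ($\omega^\perp_X\lesssim g_X$) with \eqref{e:relation_Poincare_Hermitian_metrics} and Lemma \ref{L:Poincare}(1) ($g_P\approx g_X/\|x\|^2(\lof\|x\|)^2$ on leaves, since $\eta(x)\approx\|x\|\lof\|x\|$) to get $d\Vol\lesssim d\Leb/(\|x\|^2(\lof\|x\|)^2)$, and then evaluates the resulting radial integral; your treatment of the regular part and your explicit choice $c_n=\tfrac12\log\langle W_\star,e^{-2\psi_n}\Vol\rangle$ merely spell out details the paper leaves implicit, including the observation from Remark \ref{R:ddc_approx} that adding constants preserves the conclusion of Lemma \ref{L:ddc_approx}.
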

\proof 
  \noindent {\bf Proof of assertion (1).}
  We only need to work on a local model near a singularity $a\in E.$
  For a point $x$ close to $a,$ write $x=(z,w).$ Assume without loss of generality that the fundamental form associated to  $g_X$ is equal to $ idz\wedge d\bar z+idw\wedge d\bar w.$
  By Lemma \ref{L:Poincare},  we have on $L_x$ that
  $g_P(x)\approx {g_X(x)\over  \|x\|^2(\log{\|x\|})^2}.$
  Moreover, by Lemma  \ref{L:trans_metric_near_sing} (1)
there is a constant $c>0$  such that
$
g^\perp_X(x)
\leq c g_X(x).$ We also infer from \eqref{e:W} that $W(x)\leq 2(\log {\|x\|})^2.$
Hence, by \eqref{e:W_star} $W_\star(x)\leq 2(\log {\|x\|})^3.$
  Therefore,  the condition $\langle W_\star,\Vol\rangle<\infty$  will follow if one can show that
  \begin{equation*}
  \int_{x\in\D^2} (\log {\|x\|})^3{  (i dz\wedge d\bar z)\wedge (idw\wedge d\bar w)\over   \|x\|^2(\log{\|x\|})^2}<\infty,
  \end{equation*}
  Using  the  spherical coordinates, we see easily that the above integral is  convergent. The proof of assertion (1) is thereby completed.
  
\noindent {\bf Proof of assertion (2).} It follows  from  formula \eqref{e:mu_n},  assertion (1) and the fact that the functions $\psi_n$ are  bounded.
  \endproof

   Consider the finite open cover  $\Uc=(\U_p)_{p\in I}$ of $X$ given in Subsection \ref{SS:local_model}.    
So   $E\subset I$ and  for every $a\in E,$  $\U_a$ is a singular flow box associated  to $a.$
   For each (regular or singular) flow box 
    $\U_p\in\Uc_p$ with  foliated chart
$\Phi_p:\ \U_p\to \B_p\times \Sigma_p,$ let $\Upsilon_p$  be the volume form on $\Sigma_p$ and  $\varphi_{\U_p}$ be the real-valued function on $\U_p$  given in  \eqref{e:local_varphi},
\eqref{e:Upsilon} and
\eqref{e:local_varphi_bis}.
 For every $n\geq 1$ consider  the function  $\varphi_{n,p}:\ \U_p\to\R$ given by  
 \begin{equation}
 \label{e:varphi_n,p}\varphi_{n,p}:=\psi_n+\varphi_{\U_p} \qquad\text{ on}\qquad \U_p.
 \end{equation}
 We make the   following important observation.
\begin{remark}\label{R:well-defined-bis}\rm By Remark \ref{R:well-defined}, $\Delta \varphi_{\U_p},$ $|d\varphi_{\U_p}|_P,$  $\Delta_P \varphi_{n,p}$ and $|d\varphi_{n,p}|_P$
are independent of $p\in I,$ in other words, they are   well-defined functions 
on the whole $X\setminus E.$ So  we will  write $\Delta_P \varphi_{n}$ (resp.  $|d\varphi_{n,p}|_P$) instead of $\Delta_P \varphi_{n,p}$ (resp.  $|d\varphi_{n,p}|_P$).
\end{remark} 

By  \eqref{e:mu_n}, \eqref{e:trans_metric} and \eqref{e:change-variables} we get
\begin{equation}\label{e:mu_n_bis}
(\Phi_p)_*\mu_n= \exp{(-2\varphi_{n,p}\circ \Phi^{-1}_p)}(\Phi_p)_*(g_P)\wedge\Upsilon_p\qquad\text{on}\qquad \B_p\times\Sigma_p.
\end{equation}

\begin{lemma}\label{L:double_integrability} For every $n\geq 1$ and  every $a\in E,$ in the local model for the singular flow box, 
 \begin{equation*}
  \int_{\U_a} |\Delta_P\varphi_{n,a}|d\mu_n<\infty\quad\text{and}\quad \int_{\U_a} |d\varphi_{n,a}|^2_Pd\mu_n<\infty.
 \end{equation*}
\end{lemma}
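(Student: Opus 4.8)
The plan is to reduce both integrals to the $\mu_n$-integrability of the weight $W_\star$, which is precisely Lemma \ref{L:mu_n}(2). Writing $\varphi_{n,a}=\psi_n+\varphi_{\U_a}$ as in \eqref{e:varphi_n,p}, on $\U_a\setminus\{a\}$ one has $\Delta_P\varphi_{n,a}=\Delta_P\psi_n+\Delta_P\varphi_{\U_a}$ and $|d\varphi_{n,a}|_P^2\leq 2|d\psi_n|_P^2+2|d\varphi_{\U_a}|_P^2$, so it suffices to treat the $\psi_n$-terms and the $\varphi_{\U_a}$-terms separately. For the first, I would note that after adding the constant $c_n$ (Remark \ref{R:ddc_approx}) the forms $d\psi_n$ and $\ddc\psi_n$ remain supported in a fixed compact subset of $X\setminus E$; then \eqref{e:Laplacian}, \eqref{e:relation_Poincare_Hermitian_metrics} together with $\eta\leq c$ on $X$ (Lemma \ref{L:Poincare}(1)) show that $|d\psi_n|_P$ and $|\Delta_P\psi_n|$ are bounded on $X\setminus E$. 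Since $W_\star\geq 1$, Lemma \ref{L:mu_n}(2) gives $\mu_n(\U_a)\leq\mu_n(X)\leq\langle W_\star,\mu_n\rangle=1$, so $\int_{\U_a}|\Delta_P\psi_n|\,d\mu_n$ and $\int_{\U_a}|d\psi_n|_P^2\,d\mu_n$ are finite.

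For the $\varphi_{\U_a}$-terms I would first recall the identifications available on the singular flow box: \eqref{e:curvature-varphi} gives $\Delta_P\varphi_{\U_a}(x)=-\kappa(x)$, while \eqref{e:varphi_U}, \eqref{e:specialization} and \eqref{e:change_spec} give $\varphi_{\U_a}\circ\phi_x(\zeta)=\varphi_{\U_a}(x)-\kappa_x(\zeta)$ for $\zeta$ near $0$, and since $\phi_x$ is a leafwise Poincar\'e isometry this yields $|d\varphi_{\U_a}(x)|_P=|d\kappa_x(0)|_P$. As $g_X$ is Euclidean in the local model near $a$, Lemma \ref{L:laplacian_hol}(1) then gives the pointwise bounds $|\Delta_P\varphi_{\U_a}(x)|=|\kappa(x)|\lesssim W(x)$ and $|d\varphi_{\U_a}(x)|_P^2\lesssim(\lof\dist(x,E))^2$ for $x\in\U_a$. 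Because $W_\star=\lof\dist(\cdot,E)\cdot W$ with $W\geq\lof\dist(\cdot,E)\geq 1$, both right-hand sides are $\lesssim W_\star$, so by Lemma \ref{L:mu_n}(2) the integrals $\int_{\U_a}|\Delta_P\varphi_{\U_a}|\,d\mu_n$ and $\int_{\U_a}|d\varphi_{\U_a}|_P^2\,d\mu_n$ are finite. Adding the two sets of estimates completes the proof.

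I do not expect a serious obstacle: once the pointwise estimates of Lemma \ref{L:laplacian_hol}(1) and the integrability statement of Lemma \ref{L:mu_n}(2) are available, the argument is essentially bookkeeping. The one point that must be handled with care is matching, on the singular flow box $\U_a$, the transversal weight $\varphi_{\U_a}$ with the holonomy specialization $\kappa_x$ so that Lemma \ref{L:laplacian_hol}(1) applies verbatim, but this identification is already packaged in Remark \ref{R:well-defined} and \eqref{e:curvature-varphi}. It is also worth emphasizing that it is precisely the extra factor $\lof\dist(\cdot,E)$ in $W_\star$ (as opposed to $W$) that absorbs the size $(\lof\dist(\cdot,E))^2$ of $|d\varphi_{\U_a}|_P^2$; the weaker weight $W$ alone would not control the gradient integral.
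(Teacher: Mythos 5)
Your proof is correct, and your treatment of the first integral (split $\varphi_{n,a}=\psi_n+\varphi_{\U_a}$, bound $|\Delta_P\psi_n|$ by the $\Cc^2$-norm and $|\Delta_P\varphi_{\U_a}|=|\kappa|\lesssim W\leq W_\star$ via Lemma~\ref{L:laplacian_hol}(1), then invoke Lemma~\ref{L:mu_n}(2)) is essentially the same as the paper's. Where you diverge is the gradient integral. The paper does not use the pointwise gradient estimate at all: instead it runs a Caccioppoli-type integration by parts on Poincar\'e discs in each plaque $\mathcal L_\alpha$ (cutoff $\theta$ with $|\Delta_P\theta|<100$, Stokes on $\D_P(2)$), yielding
\[
\int_{\U_a}|d\varphi_{n,a}|^2_P\,d\mu_n\leq c\int_{\U_a}d\mu_n+c\int_{\U_a}|\Delta_P\varphi_{n,a}|\,d\mu_n,
\]
so the second inequality is formally reduced to the first. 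You instead read off $|d\varphi_{\U_a}(x)|_P=|d\kappa_x(0)|_P\approx\lof\dist(x,E)$ directly from Lemma~\ref{L:laplacian_hol}(1) and then square. Both routes are valid; yours is shorter and more explicit, whereas the paper's buys robustness (it only needs a Laplacian bound, not a gradient bound) and in fact only requires $W$-integrability of $\mu_n$, while your direct bound genuinely needs the stronger $W_\star$-integrability — a point you correctly flag, and which is interesting to note even though $W_\star$ is imposed anyway in Lemma~\ref{L:mu_n}(2) for reasons coming from the Hahn--Banach density argument (Proposition~\ref{P:F_membership}(2) and Remark~\ref{R:ddc_approx}). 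One minor wording issue: you call the support of $d\psi_n,\ddc\psi_n$ a ``fixed'' compact subset of $X\setminus E$; it may depend on $n$, but since $n$ is fixed in the lemma this is harmless.
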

\proof

To prove  the first inequality of the lemma, observe that
$$
\int_{\U_a} |\Delta_P\varphi_{n,a}|d\mu_n\leq \int_X |\Delta_P\psi_n|d\mu_n+ \int_{\U_a} |\Delta_P\varphi|d\mu_n.
$$
Since $\psi_n$ is  a smooth function, we have that $|\Delta_P\psi_n|\lesssim  \|\psi_n\|_{\Cc^2}<\infty.$
Hence,  $$\int_X |\Delta_P\psi_n|d\mu_n\lesssim  \langle 1,\mu_n\rangle\lesssim  \langle W_\star,\mu_n\rangle=1.$$ 
By Lemma \ref{L:laplacian_hol}, we have $|(\Delta_P\varphi)(x)|\lesssim W(x)$ for   $x\in\U_a.$
Using this and applying Lemma  \ref{L:mu_n} (2) yield that
$$\int_{\U_a} |\Delta_P\varphi|d\mu_n\lesssim \langle W_\star,\mu_n\rangle=1.$$
The last three  estimates  imply the first inequality of the lemma.

We turn to the proof of  the second  inequality of the lemma.
For $r>0$ let $\D_P(r)$ denote the   disc with center $0$ and radius $r$ with respect to  the Poincar\'e metric $g_P$ on $\D.$
Let $\theta:\ \D_P(3)\to [0,1]$  be a  smooth  function such that  $\theta=1$ on    $\D_P(1)$ and $\theta=0$ outside     $\D_P(2)$ and
$|\Delta_P\theta|<100.$
Let  $\chi:\ \D_P(2)\to\R$ be smooth function.  
  Applying Stokes' theorem on $\D_P(2)$ yields that
  \begin{equation*}
 \int_{\D_P(2)} \ddc\theta  \exp{(-2\chi)}=\int_{\D_P(2) } \theta \ddc \exp{(-2\chi)}.
\end{equation*}
Using  \eqref{e:Delta_P} and \eqref{e:df_P}  we deduce that 
\begin{equation*}
 \int_{\D_P(2)} \theta |d\chi|^2_P \exp{(-2\chi)}g_P=\int_{\D_P(2) } (\Delta_P\theta )\exp{(-2\chi)}g_P+\int_{\D_P(2)} \theta (\Delta_P\chi) \exp{(-2\chi)}g_P.
\end{equation*}
  Hence,  
  \begin{equation*}
 \int_{\D_P(1)}  |d\chi|^2_P \exp{(-2\chi)}g_P\leq 100\int_{\D_P(2) }  \exp{(-2\chi)}g_P+\int_{\D_P(2)} | \Delta_P\chi| \exp{(-2\chi)}g_P.
\end{equation*}
Fix $\alpha\in \A.$ The above  inequality  implies that for every $x\in\mathcal L_a,$
 \begin{equation*}
 \int_{\pi_x(\D_P(1))}  |d\varphi_{n,a}|^2_P \exp{(-2\varphi_{n,a})}g_P\leq 100\int_{\phi_x(\D_P(2)) }  \exp{(-2\varphi_{n,a})}g_P+\int_{\phi_x(\D_P(2))} | \Delta_P\varphi_{n,a}| \exp{(-2\varphi_{n,a})}g_P,
\end{equation*}
where $\phi_x$ is given in \eqref{e:covering_map}.
Integrating  both sides of the  above inequality with respect to $g_P(x),$ $x\in \mathcal L_a,$  we see that there is a universal constant $c>0$ independent of $\alpha\in\A$ and $n\in\N$ such that
\begin{equation*}
 \int_{\mathcal L_\alpha}  |d\varphi_{n,a}|^2_P \exp{(-2\varphi_{n,a})}g_P\leq c\int_{\mathcal L_\alpha}  \exp{(-2\varphi_{n,a})}g_P+c\int_{\mathcal L_\alpha} |\Delta_P\varphi_{n,a} |\exp{(-2\varphi_{n,a})}g_P.
\end{equation*}
Integrating both sides of the last inequality with respect to the volume form $\Upsilon_a$ on $\A$ and using \eqref{e:change-variables} and \eqref{e:trans_metric}, \eqref{e:mu_n} and \eqref{e:varphi_n,p},
  we obtain that
 \begin{equation*}
 \int_{\U_a} |d\varphi_{n,a}|^2_P d\mu_n\leq c \int_{\U_a}  d\mu_n+c\int_{\U_a} |\Delta_P\varphi_{n,a}| d\mu_n.
\end{equation*}
On the other hand, by Lemma \ref{L:mu_n} (2), $\langle \textbf{1},\mu_n\rangle\leq \langle W_\star,\mu_n\rangle =1.$
Hence, the second  inequality of the lemma follows from   the first one.
\endproof

  \begin{lemma}\label{L:Stokes}
  For every $n\geq 1$ we have that
  $$ \int_X (\Delta_P \varphi_n-|d\varphi_n|^2_P) d\mu_n=0.$$
  \end{lemma}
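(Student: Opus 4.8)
The plan is to rewrite the integrand $(\Delta_P\varphi_n-|d\varphi_n|_P^2)\,d\mu_n$ as a leafwise $\ddc$-exact measure and then integrate by parts. The starting point is the pointwise identity on the Poincar\'e disc: for every real $\chi\in\Cc^2(\D)$, a direct computation from \eqref{e:Delta_P} and \eqref{e:df_P} shows that
$$\big(\Delta_P\chi-|d\chi|_P^2\big)\,e^{-2\chi}\,g_P=-\frac{\pi}{2}\,\ddc\big(e^{-2\chi}\big)\qquad\text{on }\D;$$
more precisely $\ddc(e^{-2\chi})=\frac{2}{\pi}\,e^{-2\chi}\big(|d\chi|_P^2-\Delta_P\chi\big)\,g_P$, and it is exactly the weight $2$ that makes the combination $\Delta_P\chi-|d\chi|_P^2$ appear. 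Applying this leafwise in each flow box $\U_p\simeq\B_p\times\Sigma_p$ with $\chi=\varphi_{n,p}\circ\Phi_p^{-1}$ (see \eqref{e:varphi_n,p}) and using the local form \eqref{e:mu_n_bis} of $\mu_n$, one gets
$$\big(\Delta_P\varphi_n-|d\varphi_n|_P^2\big)\,d\mu_n\Big|_{\U_p}=-\frac{\pi}{2}\,\ddc\big(e^{-2\varphi_{n,p}\circ\Phi_p^{-1}}\big)\wedge\Upsilon_p .$$
Since the left‑hand side is intrinsic — $\Delta_P\varphi_n$ and $|d\varphi_n|_P$ are globally well‑defined on $X\setminus E$ by Remark \ref{R:well-defined-bis}, and $\mu_n$ is a global measure — these chart expressions patch to a single global measure $M_n$ on $X\setminus E$ that is leafwise $\ddc$‑exact. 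Moreover, on overlaps $\varphi_{n,p}$ and $\varphi_{n,q}$ differ only by a function constant along the leaves, so the positive $(1,1)$‑currents $e^{-2\varphi_{n,p}\circ\Phi_p^{-1}}\Upsilon_p$ glue to a global positive $(1,1)$‑current $\Theta_n$ on $X\setminus E$, and $M_n$ equals, leafwise, $-\tfrac{\pi}{2}$ times the leafwise $\ddc$ of $\Theta_n$.

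Next I would show $\int_X M_n=0$. Let $(\theta_\epsilon)$ be the cut‑off functions of Lemma \ref{L:theta_eps}, so $\theta_\epsilon\nearrow\ind_{X\setminus E}$. As $M_n$ has a density with respect to $\mu_n$ and $\mu_n$ does not charge $E$, we have $\int_X M_n=\lim_{\epsilon\to0}\int_X\theta_\epsilon\,M_n$. For fixed $\epsilon$, all the $\varphi_{n,p}$ are smooth on $\{\dist(\cdot,E)>\epsilon^2\}$; choosing a partition of unity $(\chi_p)$ subordinate to $\Uc$ and integrating by parts leafwise (in the plaque variable) on each flow box, the plaque‑boundary contributions cancel upon summation because $\Theta_n$ is globally defined and $\sum_p\ddc\chi_p=\ddc(1)=0$ along the leaves. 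What survives are the terms carried by $d\theta_\epsilon$ and $\ddc\theta_\epsilon$, supported in the shells $\{\epsilon^2<\dist(\cdot,E)<\epsilon\}$ inside the singular flow boxes $\U_a$.

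It then remains to make these shell terms vanish as $\epsilon\to0$. Using $|d\theta_\epsilon|\lesssim|\log\epsilon|^{-1}\dist(\cdot,E)^{-1}$ and $|\ddc\theta_\epsilon|\lesssim|\log\epsilon|^{-2}\dist(\cdot,E)^{-2}$ from Lemma \ref{L:theta_eps}, together with Lemma \ref{L:Poincare} (to pass between $g_X$ and $g_P$) and Lemma \ref{L:trans_metric_near_sing}, each such term is bounded, up to a constant, by $|\log\epsilon|^{-1}$ or $|\log\epsilon|^{-2}$ times one of the finite quantities $\int_{\U_a}d\mu_n$, $\int_{\U_a}|\Delta_P\varphi_{n,a}|\,d\mu_n$, $\int_{\U_a}|d\varphi_{n,a}|_P^2\,d\mu_n$, all finite by Lemma \ref{L:double_integrability} and Lemma \ref{L:mu_n}. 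Hence the remainders tend to $0$, and $\int_X(\Delta_P\varphi_n-|d\varphi_n|_P^2)\,d\mu_n=\int_X M_n=0$.

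The step I expect to be the main obstacle is the behaviour near $E$: one must check that the leafwise integration by parts produces no genuine boundary term from the (infinitely many, shrinking) plaques accumulating on $E$ — this is where the global well‑definedness of $\mu_n$, $g_P$, $\Delta_P\varphi_n$ and $|d\varphi_n|_P$ (and the gluing of $\Theta_n$) is used in an essential way — and that the shell integrals involving $\ddc\theta_\epsilon\cdot\Theta_n$ and $d\theta_\epsilon\wedge\dc\Theta_n$ are genuinely absorbed by the $\mu_n$‑integrability of $|\Delta_P\varphi_{n,a}|$ and $|d\varphi_{n,a}|_P^2$. This is precisely why Lemma \ref{L:double_integrability} was proved immediately before the present lemma.
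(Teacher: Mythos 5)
Your reduction of the integrand is exactly the right idea and matches the paper's: the pointwise identity
$\ddc\big(e^{-2\chi}\big)=\tfrac{2}{\pi}e^{-2\chi}\big(|d\chi|_P^2-\Delta_P\chi\big)g_P$
(which the paper writes as $\ddc(e^{-2\varphi_{n,p}\circ\Phi_p^{-1}})=\big(-2\ddc+4i\partial\wedge\dbar\big)\varphi_{n,p}\circ\Phi_p^{-1}\cdot e^{-2\varphi_{n,p}\circ\Phi_p^{-1}}$) turns $(\Delta_P\varphi_n-|d\varphi_n|_P^2)\,d\mu_n$ into a leafwise $\ddc$-exact density, and then a leafwise Stokes combined with $\sum_p\Delta_P f_p=\Delta_P\mathbf 1=0$ does the job. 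The rough outline (Stokes in flow boxes, send a cutoff to $\ind_{X\setminus E}$, absorb the error with Lemma~\ref{L:double_integrability}) is the same as the paper's.

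However, there is a genuine gap in the cutoff you choose, and you are right that this is "the main obstacle" — but your proposed resolution does not address it. You use $\theta_\epsilon$ from Lemma~\ref{L:theta_eps}, which vanishes only on $\{\dist(\cdot,E)\le\epsilon^2\}$, i.e.\ near the singular \emph{point} $a$. In the local model the leaves $\mathcal L_\alpha=\tilde\psi_\alpha(\S)$ are parametrized by the unbounded sector $\S$, and as $\zeta\to\infty$ in $\S$ they accumulate on the two separatrices $\{z=0\}\cup\{w=0\}$ at all radii, not just near $a$; since $\theta_\epsilon\equiv 1$ as soon as $\max(|z|,|w|)\ge\epsilon$, the function $f_a\theta_\epsilon$ (equivalently $\chi_a\theta_\epsilon\Theta_n$) still fails to have compact support on $\mathcal L_\alpha$, so the leafwise integration by parts on the singular flow box is not licensed — there is a potential boundary term at infinity in $\S$, and global well-definedness of $\mu_n$, $g_P$, $\Delta_P\varphi_n$ or of $\Theta_n$ does not make it vanish. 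The paper circumvents this with the \emph{different} cutoff $\rho_\epsilon(z,w)=\big(1-\rho(2z/\epsilon)\big)\big(1-\rho(2w/\epsilon)\big)$ of~\eqref{e:rho_ep}, which vanishes on the tubes $\{|z|\le\epsilon/4\}\cup\{|w|\le\epsilon/4\}$ around the separatrices; this forces the support in $\S$ into the bounded region $\{0<v<-c\log(\epsilon/4),\ 0<(\Im\lambda)u+(\Re\lambda)v<-c\log(\epsilon/4)\}$, so Stokes applies on each $\mathcal L_\alpha$ with no boundary term, after which the commutator $\Delta_P(\rho_\epsilon\,e^{-2\varphi_{n,a}})-\rho_\epsilon\Delta_P(e^{-2\varphi_{n,a}})$ is estimated on $\mathcal D_\epsilon$ using \eqref{e:deriv_rho_ep} and Lemma~\ref{L:Poincare}, giving an $O(|\log\epsilon|^{-2})$ remainder. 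To repair your argument you should replace $\theta_\epsilon$ by a cutoff that, like $\rho_\epsilon$, truncates each leaf to a bounded part of $\S$; the estimates you quote from Lemma~\ref{L:double_integrability} and Lemma~\ref{L:mu_n} would then play the role you intend.
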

   By Lemma  \ref{L:double_integrability}, the above  integral makes sense. 
\proof 
Fix 
 a partition of  unity $(f_p)_{p\in I}$ associated to  $\Uc:$ $\sum_{p\in I} f_p= \mathbf{1},$  where  the support of  each function $f_p$ is  contained in  $\U_p.$ 
 Using  \eqref{e:Delta_P} and \eqref{e:change-variables} and then 
applying  Stokes' theorem to each  plaque    of   each regular flow box  $\U_p,$ we  get that
\begin{eqnarray*}
{1\over \pi}\int_X \Delta_Pf_p d\mu_n&=&\int_{\B_p\times \Sigma_p} \ddc (f_p\circ \Phi^{-1}_p) \exp{(-2\varphi_{n,p}\circ \Phi^{-1}_p)}  \Upsilon_p\\
&=& \int_{\Sigma_p} \big(\int_{\B_p} \ddc (f_p\circ \Phi^{-1}_p) \exp{(-2\varphi_{n,p}\circ \Phi^{-1}_p)}\big)  \Upsilon_p\\
&=& \int_{\Sigma_p} \big(\int_{\B_p}(f_p\circ \Phi^{-1}_p) \ddc\exp{(-2\varphi_{n,p}\circ \Phi^{-1}_p)} \big) \Upsilon_p.
\end{eqnarray*}
A straightforward computation shows that the expression in the last line is equal to 
\begin{eqnarray*}
 &&\int_{\Sigma_p} \big(\int_{\B_p}(f_p\circ \Phi^{-1}_p)(-2 \ddc(\varphi_{n,p} \circ \Phi^{-1}_p)+4i\partial (\varphi_{n,p}\circ \Phi^{-1}_p)\wedge\dbar(\varphi_{n,p}\circ \Phi^{-1}_p))
 \exp{(-2\varphi_{n,p}\circ \Phi^{-1}_p)} \big) \Upsilon_p\\
&=&  \int_{\Sigma_p} \big(\int_{\B_p}(f_p \circ \Phi^{-1}_p)(-\Delta_P(\varphi_{n,p}\circ \Phi^{-1}_p)+ |d\varphi_{n,p}\circ \Phi^{-1}_p|^2_P) \exp{(-2\varphi_{n,p}\circ \Phi^{-1}_p)}g_P\big)\wedge \Upsilon_p\\
&=&\int_{X} f_p (-\Delta_P\varphi_{n,p}+ |d\varphi_{n,p}|^2_P)d\mu_n,
\end{eqnarray*}
where the first  equality holds  by \eqref{e:Delta_P}-\eqref{e:df_P}, and the  second one  by \eqref{e:change-variables}.
We have shown that for $p\in I\setminus E,$
\begin{equation}\label{e:Stokes_outside_sing}
 \int_X \Delta_Pf_p d\mu_n=\int_{X} f_p (-\Delta_P\varphi_{n,p}+ |d\varphi_{n,p}|^2_P)d\mu_n.
\end{equation}
We will show that for each $a\in E,$
\begin{equation}\label{e:Stokes_sing}
 \int_X \Delta_Pf_a d\mu_n=\int_{X} f_a (-\Delta_P\varphi_{n,a}+ |d\varphi_{n,a}|^2_P)d\mu_n.
\end{equation}
Taking \eqref{e:Stokes_sing} for granted, we will complete the proof of the lemma. Indeed,
by summing up \eqref{e:Stokes_outside_sing} and  \eqref{e:Stokes_sing}, and using that $\Delta_P\textbf{1}=0,$  we infer that
\begin{eqnarray*}
 0&=&\int_X \Delta_P \mathbf{1} d\mu_n=\sum_{p\in I}\int_X \Delta_P f_pd\mu_n\\
 &=&\sum_{p\in I\setminus E}\int_X f_p (-\Delta_P\varphi_{n,p}+ |d\varphi_{n,p}|^2_P)d\mu_n+
 \sum_{a\in E}\int_X  f_a (-\Delta_P\varphi_{n,a}+ |d\varphi_{n,a}|^2_P)d\mu_n\\
 &=&\sum_{p\in I} \int_X f_p(-\Delta_P\varphi_{n}+ |d\varphi_{n}|^2_P)d\mu_n\\
 &=&\int_X ( \sum_{p\in I}  f_p)(-\Delta_P\varphi_{n}+ |d\varphi_{n}|^2_P)d\mu_n.
\end{eqnarray*}
 Hence, the desired identity of the lemma follows. 
 
 To complete the proof it remains to establish \eqref{e:Stokes_sing}.
 We  work  in the local model $\Fc|_{\U_a}\simeq (\D^2,\Lc,\{0\})$  associated  to  a point $a\in E.$
 Using \eqref{e:local_varphi_bis} and \eqref{e:varphi_n,p},
 we obtain   a form $\Upsilon_a$ which is well-defined on the distinguished transversal $\A_a$ of this  model
 as well as  function $\varphi_{n,a}$ which is  well-defined on $\D^2.$

Let  $\rho:\ \C\cup\{\infty\} \to \R^+$ be a smooth function 
which  satisfies
\begin{equation*}
\rho(|t|)= 0\ \text{for}\ |t|\geq 2,\quad
\rho(t)= 1\ \text{for}\ |t|\leq 1/2,\quad  0<\rho(t)< 1\ \text{for}\ 1/2<| t|< 2.
\end{equation*}
 For every $0<\epsilon\ll 1,$ consider the smooth function $\rho_\epsilon:\ \D^2\to [0,1]$ given by
 \begin{equation}\label{e:rho_ep}
 \rho_\epsilon(x):=\big(1-\rho(2z/\epsilon)\big)\big(1-\rho(2w/\epsilon)\big) \quad\text{for}\quad x=(z,w)\in\D^2.
 \end{equation} 
 Note that  $\rho_\epsilon(x)=1$  if $|z|> \epsilon$ and $|w|>\epsilon,$ 
 $\rho_\epsilon(x)=0$ if either  $|z|<\epsilon/4$  or  $|w|<\epsilon/4.$ 
 Moreover,  
 we have 
 \begin{equation}\label{e:deriv_rho_ep}
 \partial_z \rho_\epsilon=O(\epsilon^{-1})dz,\quad \dbar_z\rho_\epsilon=O(\epsilon^{-1})d\bar z,\quad\ddcz
 \rho_\epsilon=O(\epsilon^{-2}) idz\wedge d\bar z,\quad \partial_z\dbar_w
 \rho_\epsilon=O(\epsilon^{-2}) idz\wedge d\bar w.
 \end{equation}
 Similar  estimates also hold when  $z$ is replaced by $w.$ 
 
 Now fix $\alpha\in\A$  and consider  the function $( \Delta_P f_a)  \rho_\epsilon\exp{(-2\varphi_{n,a})}$ restricted to the  Riemann surface $\mathcal L_\alpha.$
 Recall  that   $\mathcal L_\alpha$ is  the image of the sector $\S$  by 
  the  map $\tilde\psi_\alpha$ given in \eqref{e:leaf_equation_bis} and that $(z,w)=\tilde\psi_\alpha(\zeta)$ is related to $\zeta=u+iv$ by  \eqref{e:|z|}. 
 Since $f_a=1$ in a neighborhood of $a=0$ and $f_a$ is  compactly supported in $\D^2,$ it follows from the above properties of $\rho_\epsilon$ that
 the support of the function $( \Delta_P f_a)  \rho_\epsilon\exp{(-2\varphi_{n,a})}$   is  contained in the image by $\tilde\psi_\alpha$ of a  set $K_\alpha$
 such that
 $$   K_\alpha \Subset\{(u,v)\in\S:  0< v<-c\log{(\epsilon/4)}\quad\text{and}\quad 0< (\Im\lambda)u+(\Re\lambda)v<-c\log{(\epsilon/4)} \} \subset \S.$$
 Here $c>0$ is a constant that depends only on  $a.$ 
 Therefore, we are able to 
 applying Stokes' theorem  to this  function  on   $\mathcal L_\alpha.$ Consequently, for each $\alpha\in\A$ we have that 
 \begin{equation}\label{e:Stokes_cut_off}
 \int_{\mathcal L_\alpha}( \Delta_P f_a)  \rho_\epsilon\exp{(-2\varphi_{n,a})} g_P= \int_{\mathcal L_\alpha} f_a   \Delta_P\big(\rho_\epsilon\exp{(-2\varphi_{n,a})}\big) g_P.
\end{equation}
 Since  $\rho_\epsilon\nearrow 1$ on $\D^2$ as $\epsilon\to 0,$ the left-hand side of \eqref{e:Stokes_cut_off} tends to $\int_{\mathcal L_\alpha}( \Delta_P f_a)  \exp{(-2\varphi_{n,a})} g_P.$
 On the other hand, 
as the  difference $\Delta_P\big(\rho_\epsilon\exp{(-2\varphi_{n,a})}\big)- \rho_\epsilon\Delta_P\big(\exp{(-2\varphi_{n,a})}\big)$ is  equal to $0$  on  every open set where $\rho_\epsilon$ is  
identically either  to $0$ or $1,$
we deduce from the above  properties of $\rho_\epsilon$  that the above difference 
is  nonzero only if $x\in \mathcal D_\epsilon,$ where
$$\mathcal D_{\epsilon}:=\left\lbrace x=(z,w)\in\D^2:\ \epsilon/4\leq |z|,|w|\leq \epsilon\right\rbrace .$$
Therefore, we infer from  \eqref{e:deriv_rho_ep}  that for $x=(z,w)\in \mathcal D_{\epsilon}\cap  \mathcal L_\alpha,$ 
 \begin{equation*}
    \Delta_P\big(\rho_\epsilon\exp{(-2\varphi_{n,a})}\big)- \rho_\epsilon \Delta_P\big(\exp{(-2\varphi_{n,a})}\big)=O(\epsilon^{-2}) \chac_{\mathcal D_\epsilon} \exp{(-2\varphi_{n,a})},
 \end{equation*}
 where  $\chac_{\mathcal D_\epsilon}$ is the characteristic function of $\mathcal D_\epsilon.$
 Integrating  both sides with respect to the form  $\Upsilon_a$ on $\A_a$ and using \eqref{e:mu_n}, \eqref{e:mu_n_bis}, we get that
 \begin{equation*}
  \Big| \int_{\alpha\in \A_a}\big(\int_{\mathcal L_\alpha} f_a  \big( \Delta_P\big(\rho_\epsilon\exp{(-2\varphi_{n,a})}\big)- 
  \rho_\epsilon\Delta_P\big(\exp{(-2\varphi_{n,a})}\big)\big) g_P\big) \Upsilon_a(\alpha)\Big|
  \leq c
  \langle\epsilon^{-2}\chac_{\mathcal D_\epsilon}\mu_n\rangle.
 \end{equation*}
 Arguing as in the proof of Lemma \ref{L:mu_n}, we  see easily that  the  right-hand side is bounded by
 \begin{equation*}
  \int_{x=(z,w)\in\mathcal D_\epsilon} {  (i dz\wedge d\bar z)\wedge (idw\wedge d\bar w)\over \epsilon^2  \|x\|^2(\log{\|x\|})^2}=O(|\log \epsilon|^{-2}).
  \end{equation*} 
   Therefore, we get that  as $\epsilon$ tends to $0,$ the difference
     \begin{equation*}
   \int_{\alpha\in \A_a}\big( \int_{\mathcal L_\alpha}f_a   \Delta_P\big(\rho_\epsilon\exp{(-2\varphi_{n,a})}\big) g_P \big)\Upsilon_a(\alpha)
 -\int_{\alpha\in \A_a}\big( \int_{\mathcal L_\alpha} f_a  \rho_\epsilon \Delta_P\big(\exp{(-2\varphi_{n,a})}\big) g_P\big))\Upsilon_a(\alpha)
\end{equation*}
tends also to $0.$
   Letting $\epsilon$ tend to $0,$ we infer 
   from this and \eqref{e:Stokes_cut_off} that  
  \begin{equation*}
  \int_{\alpha\in \A_a}\big( \int_{\mathcal L_\alpha}( \Delta_P f_a)  \exp{(-2\varphi_{n,a})} g_P\big) \Upsilon_a(\alpha) =   \int_{\alpha\in \A_a}\big(\int_{\mathcal L_\alpha} f_a 
  \Delta_P\big(\exp{(-2\varphi_{n,a})}\big) g_P\big) \Upsilon_a(\alpha).
\end{equation*} 
Hence, we get \eqref{e:Stokes_sing} easily  using \eqref{e:mu_n_bis}.
\endproof

\proof[End of the proof of  Theorem B]
 Suppose in order to get  a  contradiction   that $\chi(T)\geq 0.$
 So by Proposition  \ref{P:Lyapunov_vs_kappa}, we have that $$\int_X \kappa(x)d\mu(x)\geq 0.$$ 
By Lemma \ref{L:mu_n} (2), there is a  subsequence  of $(\mu_n)$  converging  weakly to  a  measure $\mu'.$
Consider the family of currents $T_n$ on $\Dc^{1,1}(\Fc)$ defined by
$$
T_n(h):=\big\langle \exp{(-2\psi_n)}  \Vol,h   \big\rangle\qquad\text{for}\qquad  h\in\Dc^{1,1}(\Fc),
$$
where $\Vol$ and $\psi_n$ are defined  in \eqref{e:trans_metric} and \eqref{e:mu_n}.
Let $f_h\in\Dc^0(\Fc)$  be defined by $h= f_h g_P.$ Then we get that
$T_n(h)= \langle f_h,\mu_n\rangle.$
On the other hand, by   Lemma \ref{L:mu_n} (2),  $\langle W_\star,\mu'\rangle=1.$
  So $\mu'$ does not charge any  singular point.   
Consequently,  restricting  $\mu'$ to  each flow box  we can prove that $T_n$  converge weakly to  a nonzero directed positive current $T'$ and $\mu'=T'\wedge g_P$ on $X\setminus E.$

Next, we will show that $T'$ is closed on $X\setminus E.$
Taking  this for granted, 
 we see that $T'$  is a nonzero  directed  positive closed current. Moreover, since
 $$
 \langle T',g_X\rangle \leq \langle T'\wedge g_P,  \chac_X\rangle=\langle \mu', \chac_X\rangle\lesssim  \langle \mu', W_\star\rangle= 1, 
 $$
 by a classical theorem of Skoda, $T'$ extends trivially through $E$ to a positive closed current on $X.$ 
 This contradicts the hypothesis of the theorem, and  
the proof is  thereby completed.

It remains to prove that $dT'=0$ on $X\setminus E.$ To do this
consider  $\xi\in\Dc^{1}(\Fc).$   Using a partition of unity, we can write $\xi$
as a  finite sum of $1$-form  whose  support is contained  a regular  flow box.  Therefore, we   may assume that  the support of $\xi$  is  contained
in a regular  flow box $\U$  with foliated chart    $\Phi:\ \U\to \B\times \Sigma.$ 
Let $\Upsilon$  be the volume form on $\Sigma$ and  $\varphi_{\U}$ be the real-valued function on $\U$  given in 
\eqref{e:Upsilon} and  \eqref{e:local_varphi}.
 For every $n\geq 1,$ following  \eqref{e:varphi_n,p}  we consider  the function  $\varphi_{n,\U}:\ \U\to\R$ given by  $\varphi_{n,\U}:=\psi_n+\varphi_{\U} $ on $\U.$ 
 The  measures $\mu_n$ given in \eqref{e:mu_n}  can be  rewritten as
$$
\mu_n=g_{X,n}^\perp\wedge g_P,
$$
where using  \eqref{e:mu_n_bis}, $g_{X,n}^\perp$ is a  directed $(1,1)$-current on $\Fc$  which can be described in the flow box $\B\times \Sigma\simeq \U$ by 
$$
(\Phi_*g_{X,n}^\perp)(\zeta,\alpha)=\exp{(-2(\varphi_{n,\U}\circ \Phi^{-1})(\zeta,\alpha))}\Upsilon(\alpha)\quad\text{for}\quad (\zeta,\alpha)\in \B\times \Sigma.
$$  
Applying Stokes' theorem on $\B\times \Sigma$ and on  each plaque of $\B\times\{\alpha\}$ with $\alpha\in\Sigma$ and using \eqref{e:change-variables},  we  get that
\begin{eqnarray*}
 \langle d\xi, g_{X,n}^\perp\rangle&=&\int_{\B\times \Sigma} \Phi_*(d\xi)\wedge \exp{(-2\varphi_n\circ \Phi^{-1})}\Upsilon\\
 &=&\int_{\B\times \Sigma}\Phi_*\xi\wedge d\big(\exp{(-2\varphi_n\circ \Phi^{-1})}\Upsilon\big)\\
 &=& \int_\Sigma\big(\int_{\B}\Phi_*\xi\wedge d\big(\exp{(-2\varphi_n\circ \Phi^{-1})}\big)\Upsilon\\
 &=& -  2\int_\Sigma\big(\int_{\B}\Phi_*\xi\wedge (d\varphi_n \circ\Phi^{-1})\cdot \big(\exp{(-2\varphi_n\circ \Phi^{-1})}\big)\Upsilon \\
 &=&-2\int_X (\xi\wedge d\varphi_n)\wedge g_{X,n}^\perp.
\end{eqnarray*}
Hence, by Cauchy--Schwarz's inequality 
\begin{equation}\label{e:Cauchy--Schwarz}
\big|    \langle d\xi, g_{X,n}^\perp      \rangle                  \big|
= 2\big| \int_X (\xi\wedge d\varphi_n)\wedge g_{X,n}^\perp\big|  \leq c \|\xi\|_{\Cc^0}\int_X  |d \varphi_n|_Pd\mu_n
\leq  c \|\xi\|_{\Cc^0}\big(\int_X  |d \varphi_n|^2_Pd\mu_n\big)^{1/2},
\end{equation}
where $c>0$ is a constant depending only on $\U,$ and $ \|\xi\|_{\Cc^0}$ is the  sup-norm of $\xi$  (see Subsection \ref{SS:background}). Here  we recall from  Lemma  \ref{L:Poincare} (1)
that on $\U$ the leafwise Poincar\'e metric $g_P$ is  equivalent to the restriction of the ambient metric $g_X$ on  plaques of $\U.$
Moreover, by Remark \ref{R:well-defined-bis}, $|d\varphi_{n,\U}|_P,$ $\Delta_P\varphi_{n,\U},$ $ \Delta\varphi_\U$  are independent of $U,$ and are   well-defined 
on the  whole $X\setminus E.$ So we will omit the index $\U$ and   rewrite them simply as  $|d\varphi_{n}|_P,$ $\Delta_P\varphi_{n},$ $ \Delta\varphi$ respectively.

By Lemma \ref{L:Stokes},
\begin{equation}\label{e:L_closed_Stokes}
\int_X  |d \varphi_n^2|_Pd\mu_n=\int_X \Delta_P\varphi_nd\mu_n.
\end{equation}
On the other hand, recall  from  \eqref{e:curvature-varphi}  and  Remark \ref{R:well-defined} that  
$\kappa(x)=-\Delta_P\varphi_\U(x)=-\Delta_P \varphi(x)$ for $x\in X\setminus E.$
Therefore, by Lemma \ref{L:ddc_approx}, there is  a  sequence of $(\epsilon_n)\subset \R^+$  such that 
$\epsilon_n\searrow 0$ and  
 $$
      \big({-\Delta_P\varphi(x)- \Delta_P\psi_n(x)\over W_\star(x)}\big)\geq -\epsilon_n\quad\text{for}\quad  x\in X\setminus E.
  $$
  So
  \begin{equation*}
  \int_X \Delta_P(-\varphi_n)d\mu_n= \int_X W_\star(x){\Delta_P(-\varphi-\psi_n)  \over  W_\star(x)}d\mu_n 
                             \geq-   \int_X \epsilon_n W_\star(x)d\mu_n .
  \end{equation*}
  Applying    Lemma \ref{L:mu_n} (2) to the last line yields that 
  $$\limsup_{n\to\infty}\int_X \Delta_P\varphi_nd\mu_n\leq 0.$$
  This,  combined with \eqref{e:L_closed_Stokes}, implies that 
  \begin{equation*} 
\limsup_{n\to\infty}\int_X  |d \varphi_n^2|_Pd\mu_n\leq 0.
\end{equation*}
Hence, $\lim_{n\to\infty}\int_X  |d \varphi_n^2|_Pd\mu_n=0.$
Inserting this  into the right hand side of \eqref{e:Cauchy--Schwarz}, we get that $\lim_{n\to\infty}\langle d\xi, g_{X,n}^\perp\rangle=0.$
Hence, $\langle  dT',\xi\rangle=0.$ Thus, $T'$ is  closed  on $X\setminus E$ as  claimed.

\endproof

\begin{remark} \rm It is  of interest  to  know  whether  Theorems  A and B still hold if $X$ is merely a compact K\"ahler surface. This is  the case
 if  we can relax  the projectivity assumption in 
 \cite[Theorem 1.1]{NguyenVietAnh18b}. 
 The reader may find in  \cite{DinhSibony18b,FornaessSibony08,NguyenVietAnh18c,NguyenVietAnh19}  some other open questions in the ergodic theory of singular holomorphic foliations.
\end{remark}


\small

\medskip

\noindent
Vi{\^e}t-Anh Nguy{\^e}n,  
Universit\'e de Lille, 
Laboratoire de math\'ematiques Paul Painlev\'e, 
CNRS U.M.R. 8524,  
59655 Villeneuve d'Ascq Cedex, 
France.\\
{\tt Viet-Anh.Nguyen@univ-lille.fr},
{\tt http://math.univ-lille1.fr/$\sim$vnguyen}

\end{document}